\numberwithin{equation}{section}
\def\@tocline#1#2#3#4#5#6#7{\relax
  \ifnum #1>\c@tocdepth 
  \else
    \par \addpenalty\@secpenalty\addvspace{#2}%
    \begingroup \hyphenpenalty\@M
    \@ifempty{#4}{%
      \@tempdima\csname r@tocindent\number#1\endcsname\relax
    }{%
      \@tempdima#4\relax
    }%
    \parindent\z@ \leftskip#3\relax \advance\leftskip\@tempdima\relax
    \rightskip\@pnumwidth plus4em \parfillskip-\@pnumwidth
    #5\leavevmode\hskip-\@tempdima
      \ifcase #1
       \or\or \hskip 1em \or \hskip 2em \else \hskip 3em \fi%
      #6\nobreak\relax
    \dotfill\hbox to\@pnumwidth{\@tocpagenum{#7}}\par
    \nobreak
    \endgroup
  \fi}
\definecolor{Gray}{gray}{0.9}
\pgfplotsset{compat=1.8}
\definecolor{rulecolor}{RGB}{0,71,171}
\definecolor{tableheadcolor}{gray}{0.92}
\newtheorem{theorem}{Theorem}[section]
\newtheorem{lemma}[theorem]{Lemma}
\newtheorem{proposition}[theorem]{Proposition}
\newtheorem{corollary}[theorem]{Corollary}
\theoremstyle{definition}
\newtheorem{remark}[theorem]{Remark}
\newtheorem{example}[theorem]{Example}
\newcommand{\cc}{\subset\!\subset}
\newcommand{\Orm}{\mathrm{O}}
\newcommand{\Ecal}{\mathcal{E}}
\newcommand{\Gcal}{\mathcal{G}}
\newcommand{\Hcal}{\mathcal{H}}
\newcommand{\Pcal}{\mathcal{P}}
\newcommand{\Xcal}{\mathcal{X}}
\newcommand{\Sbb}{\mathbb{S}}
\DeclareMathOperator{\diam}{diam}
\DeclareMathOperator{\dist}{dist}
\newcommand{\N}{\mathbb{N}}
\newcommand{\R}{\mathbb{R}}
\newcommand{\loc}{\mathrm{loc}}
\newcommand{\spt}{\mathrm{spt}}
\newcommand{\eps}{\epsilon}
\newcommand{\cG}{\mathcal{G}}
\renewcommand{\eps}{\varepsilon}
\newcommand{\e}{\varepsilon}
\DeclareMathOperator{\Div}{div}
\DeclareMathOperator{\Lip}{Lip}
\newcommand{\mres}{\mathbin{\vrule height 1.6ex depth 0pt width
        0.13ex\vrule height 0.13ex depth 0pt width 1.3ex}}
\DeclareMathOperator{\Lawson}{Lawson}
\DeclareMathOperator{\lens}{lens}
\DeclareMathOperator{\plane}{plane}
\newcommand{\X}{\mathcal{X}}
\title[]{On the non-uniqueness of locally minimizing clusters via singular cones}
\author[L. Bronsard]{Lia Bronsard}
\address{Department of Mathematics, McMaster University, 1280 Main St West, Hamilton, ON L8S 4J8, Canada}
\email{bronsard@mcmaster.ca}
\author[R. Neumayer]{Robin Neumayer}
\address{Department of Mathematical Scienes, Carnegie Mellon University, 5000 Forbes Avenue, Pittsburgh, PA 15213, United States of America }
\email{neumayer@cmu.edu}
\author[M. Novack]{Michael Novack}
\address{Department of Mathematics, Louisiana State University, 303 Lockett Hall, Baton Rouge, LA 70803, United States of America}
\email{mnovack@lsu.edu}
\author[A. Skorobogatova]{Anna Skorobogatova}
\address{Institute for Theoretical Sciences, ETH Z\"{u}rich, Scheuchzerstrasse 70
8092 Z\"{urich}, Switzerland}
\email{anna.skorobogatova@eth-its.ethz.ch}
\begin{document}

\begin{abstract}
We construct partitions of $\mathbb{R}^n$ into three sets $\{\X(1),\X(2),\X(3)\}$ that locally minimize interfacial area among compactly supported volume preserving variations and that blow down at infinity to singular area-minimizing cones.
{As a consequence, we prove the} non-uniqueness of the standard lens cluster in a large number of dimensions starting from $8$. 
\end{abstract}

\maketitle

\section{Introduction}
\subsection{Overview}
The classical multiple bubble problem in $\R^n$ concerns the existence and structure of configurations of $N$ regions of  prescribed finite volumes, together with an exterior chamber of infinite volume, that minimize the surface area of their interfaces.\footnote{Note that the case $N=1$ is the classical isoperimetric problem.} This problem has received much attention in recent decades, with an abundance of work concerning the existence \cite{Alm76}, structure and regularity \cite{JTaylor76, ColEdeSpo22}, and characterization \cite{FoiAlfBroHodZim93, HutMorRitRos02,ReiHeiLaiSpi03,Rei08, Wichiramala,PaTo20, DRTi23,MilXu25, MilNee23} of minimizers and critical points. This includes recent breakthrough work in which  Milman \& Neeman \cites{MilNee22} gave a complete classification of minimizers in the case $N\leq \min\{4,n\}$, resolving a long-standing conjecture of \cite{SulMor96}. 
See the introduction of \cite{MilNee22} for a comprehensive review of the literature surrounding this problem.
\medskip

Questions of these types have recently been extended to configurations with multiple infinite chambers, following the introduction of the $\boldsymbol{(1,2)}${\bf -cluster problem}, and more generally, the $(N,M)$-cluster problem,  by the first author with Alama \& Vriend in \cite{AlaBroVri23} in connection to tri-block copolymers. 
A $(1,2)$-cluster is a partition of $\mathbb{R}^n$ into three chambers, each of which is a set of locally finite perimeter, generically denoted by $\mathcal{X}=(\mathcal{X}(1),\mathcal{X}(2),\mathcal{X}(3))$, where $0<|\mathcal{X}(1)|<\infty$ and $|\mathcal{X}(2)|=|\mathcal{X}(3)|=\infty$.
The cluster perimeter of a $(1,2)$-cluster $\X$ in a ball $B_\rho(x)$ is 
\begin{equation}
    \label{eqn: cluster perimeter}
    \Pcal(\X;B_\rho(x)) := \frac{1}{2}\sum_{i=1}^3 P(\X(i); B_\rho(x))\,.
\end{equation}
Here $P(\X(i); B_\rho(x))$ denotes the relative perimeter of $\X(i)$ in $B_\rho(x)$, which simply agrees with the surface area of $\partial \X(i)\cap B_\rho(x)$ when $\partial \X(i)$ is piecewise smooth and is defined in section~\ref{s:prelim} in general.
A $(1,2)$-cluster $\mathcal{X}$ is locally minimizing\footnote{When there are two or more infinite-volume chambers, we necessarily need to consider \emph{local} minimizers, due to the infinite perimeter contribution of the interfaces between neighboring pairs of infinite-volume chambers.} 
if, for every $\rho>0$,
\begin{align}\label{eq:definition of local min}
    \Pcal(\Xcal;B_\rho(0))\leq \Pcal (\Xcal'; B_\rho(0))
\end{align}
whenever $\Xcal(i) \Delta \Xcal'(i) \cc B_\rho(0)$ and $|\Xcal(i)| = |\Xcal'(i)|$ for $i=1,2,3$\footnote{As shown in \cite{BroNov24}, local minimality of a $(1,2)$-cluster $\X$ with respect to compactly supported variations satisfying $|\X(i)| = |\X'(i)|$ for $i=1,2,3$ is equivalent to local minimality taken against variations satisfying $|\X(i) \cap B_\rho(0)| = |\X'(i) \cap B_\rho(0)|$ for $i=1,2,3$, where the volume of the infinite chambers is required to be preserved locally instead of only globally.}. {Clusters arise in the modeling of various physical systems such as immiscible fluid configurations \cites{Whi86,Leo01} in bounded domains or, for our setting with more than one infinite chamber, in  tri-block copolymers \cites{AlaBroLuWan22, AlaBroLuWan25} in the small-volume “droplet” regime for multiple phases.}  {Recent progress has been made toward existence, classification,    and stability for $(1,2)$-clusters (and $(M,N)$-clusters more generally) in \cite{AlaBroVri23,NovPaoTor23, BroNov24,BonaciniCristoferiTopaloglu2025, MilXu25}.}

\medskip

In \cite{BroNov24}, the first and third authors give a complete classification of  locally minimizing   $(1,2)$-clusters for $n\leq 7$, proving that the \emph{standard lens cluster} $\X_{\lens}$ is the unique locally minimizing $(1,2)$-cluster in $\R^n$ modulo homotheties. They obtain the same result in higher dimensions ($n\ge 8$) under the assumption of planar growth at infinity. The standard lens cluster, which is locally minimizing in every dimension, is characterized by the properties that {$|\X_{\lens}(1)|=1$},  $\partial\X_{ {\lens}}(2)\cap \partial\X_{{\lens}}(3) \subset \{x_n=0\}$ and $\partial \X_{ {\lens}}(1)$ is the union of pair of equal-radii spherical caps meeting on $\{x_n=0\}$ with equal angles of $\frac{2\pi}{3}$ between the three interfaces. An analogous classification was previously shown for $n=2$  in \cite{AlaBroVri23}.\medskip

The aforementioned classification in $n\leq 7$ is fundamentally related to the fact that planes are the only area-minimizing hypercones in these dimensions \cite{Simons}. In view of the existence of non-planar area-minimizing hypercones when $n \geq 8$ \cite{BDGG}, the following question was raised in \cite{BroNov24} and \cite{NovPaoTor23}:
\[
\mbox{\it For $n\geq 8$, do there exist locally minimizing $(1,2)$-clusters besides the standard lens?}
\]
The main goal of this paper is to answer this open question in the affirmative for a large number of dimensions, starting from $n=8$. 
{Part of our proof relies on error estimates for rigorous numerical comparison of appropriate renormalized quantities and so our statement goes up to dimension $2700$ as will be explained below.
}

\begin{theorem}\label{thm: summary}
    Let $n \in \{ 8, \dots, 2700\}.$ There exists a locally minimizing $(1,2)$-cluster $\X$ that is not a standard lens.
\end{theorem}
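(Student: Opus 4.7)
The strategy is to construct, for each $n$ in the stated range, a $(1,2)$-cluster $\X$ whose interface $\partial\X(2)\cap\partial\X(3)$ is asymptotic at infinity to a singular area-minimizing hypercone $C\subset\R^n$ (for instance the Simons cone in $\R^8$ or a Lawson-type cone $C_{p,q}$ in higher dimensions, which exist precisely because $n\geq 8$). Once local minimality of such a cluster is established, distinctness from $\X_{\lens}$ is automatic, since $\X_{\lens}$ blows down to a hyperplane whereas the proposed $\X$ blows down to the non-planar cone $C$.

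For the construction, I would fix such a cone $C$ whose complement has exactly two connected components and pick a leaf $\Sigma$ of the associated Hardt--Simon foliation; this yields a smooth, strictly stable minimal hypersurface asymptotic to $C$ at infinity. The two components of $\R^n\setminus\Sigma$ then serve as $\X(2)$ and $\X(3)$ outside a large compact set. To attach the finite chamber $\X(1)$, I would produce a bounded region whose boundary is a pair of smooth caps meeting $\Sigma$ at $120^\circ$ angles along a smooth spine. Exploiting the $O(k)\times O(n-k)$-symmetry inherited from $C$, this essentially reduces to a one-dimensional free boundary problem for the cap profile; alternatively, one can realize the cluster variationally by minimizing $\Pcal(\frarg;B_R)$ subject to a volume constraint on $\X(1)$ and Dirichlet-type data on $C$ outside $B_R$, then passing to the limit $R\to\infty$.

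The main obstacle is upgrading this critical configuration to a bona fide local minimizer in the sense of \eqref{eq:definition of local min}. My plan is a foliation-based calibration argument: the Hardt--Simon foliation supplies a natural unit vector field calibrating $\Sigma$ away from the bubble, and one must construct a compatible cluster calibration near $\X(1)$ consisting of three unit vector fields (one for each pair of chambers) that meet along the spine with the correct $120^\circ$ conditions, are divergence-free on each interface, and integrate to the three relative perimeters. The delicate step is matching these three foliations simultaneously along the spine with the angular balance, and controlling the cross-terms that the bubble introduces into the otherwise purely conical calibration.

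The restriction $n\leq 2700$ is to be expected from a rigorous numerical input to this argument. Verifying the requisite calibration (or, equivalently, the necessary sign of certain renormalized perimeter comparisons) reduces to checking dimension-dependent inequalities between explicit quantities associated with $C$, for example leaf-to-cone area ratios of the Hardt--Simon foliation, spectral gaps of the Jacobi operator on the cross-section of $C$, or the characteristic decay exponents of $\Sigma\to C$ at infinity. These quantities admit closed-form or convergent-series representations that can be evaluated with controlled error in each individual dimension, and $n=2700$ reflects the natural cutoff at which such rigorous numerics remain feasible while the requisite strict inequalities continue to hold.
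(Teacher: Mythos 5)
Your proposal takes a genuinely different route from the paper, and I think the central step has a real gap.

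The paper does \emph{not} attempt to calibrate a critical configuration. Instead it sets up a confined/penalized minimization problem on $B_{4R_k}$ with boundary data $K$ outside $B_{3R_k}$, extracts subsequential limits of minimizers via a partial concentration--compactness argument, and then shows the limit cannot be a lens: if every concentration were a (rescaled) lens or a floater, a concavity argument plus the gluing/cone-minimality estimates would force $\Ecal_k(\X_k;B_{4R_k}) \gtrsim \Lambda_{\plane}(n) + P(K;B_{4R_k})$, contradicting the competitor bound coming from the strict inequality $\Lambda(\partial K) < \Lambda_{\plane}(n)$. The numerical content of the paper is exactly the verification of $\Lambda_{\Lawson}(n) < \Lambda_{\plane}(n)$, an explicit comparison of renormalized perimeters of a concrete competitor versus the lens; this is what gives $n\leq 2700$. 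Your proposed numerics (leaf-to-cone area ratios, Jacobi spectral gaps, decay exponents of $\Sigma\to C$) are not the quantities that drive the paper's proof, and it is not clear they are the right inputs for a calibration.

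The gap in your plan is the calibration step itself. You propose to prove local minimality by producing three unit vector fields near $\X(1)$ that meet along the spine with $120^\circ$ balance and glue to the Hardt--Simon calibration at infinity. You flag this as ``the delicate step'' but offer no construction, and this is precisely the difficulty the paper is designed to avoid. There are several obstructions: (i) with a volume constraint the calibrating fields would have prescribed (nonzero) divergence inside $\X(1)$, and matching three such fields with the angle/balance conditions across a triple junction that itself must be determined is far beyond the standard Hardt--Simon setup; (ii) the actual interface $\partial^*\X(2)\cap\partial^*\X(3)$ near the bubble is not a leaf of the foliation but some perturbation of it connecting to the bubble, so the ambient calibration does not directly apply there; (iii) the $O(k)\times O(n-k)$ symmetry reduction you appeal to is exactly what the paper's Remark~\ref{remark:symmetries} explains is not available here — the presence of $\X(1)$ breaks the Lawson-type symmetrization, and there is no proof that the minimizer (or a critical point) inherits the symmetry. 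Finally, your claim that ``distinctness from $\X_{\lens}$ is automatic because $\X$ blows down to $C$'' presupposes the blowdown is $C$; the paper can only show it is \emph{some} singular cone of density at most that of $K$, and whether it equals $K$ is left open. If your calibration argument worked it would in fact prove more than the paper does, which is a strong signal that it is substantially harder.

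So: correct target, plausible intuition, but the key lemma (existence of a compatible cluster calibration) is asserted rather than proved, and the symmetry reduction it leans on is known to be problematic in this setting.
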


\subsection{Main Results}
The basic scheme to construct the local minimizing $(1,2)$-clusters of Theorem~\ref{thm: summary} goes as follows. Fix $n\geq 8$ and let $K$ be a perimeter minimizing cone in $\R^n$ whose boundary is not a plane, e.g., the region $K = \{ x_1^2 +\dots +x_4^2 < x_5^2 + \cdots + x_8^2\}$  bounded by the Simons cone in $\R^8.$\medskip

For $R>0$ large, we set up an energy minimization problem in the class of $(1,2)$-clusters $\X$ with $\X(1) \subset B_{3R}$ of volume $1$ for which, outside of $B_{3R}$, the chambers $\X(2)$ and $ \X(3)$ coincide with $K$ and $K^c$ respectively. Morally one wants to minimize the cluster perimeter $\mathcal{P}(\X; B_{4R})$, though by instead minimizing the energy $\mathcal{P}(\X; B_{4R}) + \cG_R(\X(1))$ for a carefully constructed confinement potential $\cG_R$, we circumvent technical challenges that would arise from the possibility of $\X(1)$ saturating the constraint $\X(1) \subset B_{3R}$.
\medskip

For a sequence $R_k \to \infty$, take a sequence of minimizers $\X_k$ to this minimization problem with $R=R_k$. We wish to obtain a local minimizer {of $\Pcal$} as a limit of the $\X_k$, and to this end we use a ``partial concentration compactness approach,'' where the volume of $\X_k(1)$ is the quantity we wish to preserve in the limit. Recall how classical concentration compactness methods are often used to prove existence in variational problems with critical scaling or on non-compact domains: 
\medskip

\noindent{\it Step 1:} Characterize precisely how a sequence can lose compactness and the cost in energy to exhibit this behavior.\\
{\it Step 2:} Establish compactness for a (minimizing) sequence by showing its energy lies below this loss-of-compactness threshold.
\medskip

For instance, to produce constant scalar curvature metrics in a given conformal class in the Yamabe problem, one shows that a noncompact minimizing sequence for the Yamabe functional must concentrate at a point and pay precisely the Yamabe constant of the sphere in energy \cite{LeeParker}. Similarly, in the theory of stationary harmonic maps, {loss of strong $H^1$-compactness for equibounded sequences is characterized by the formation of ``bubbles", and the energy loss is realized exactly by the sum of energies of such bubbles; see \cite{LinRiviere02,N-V-energy-identity}.}

\medskip

In the present setting, Step 1 as stated seems out of reach, as there are in principle myriad possible asymptotic behaviors, and importantly, possible energy costs, of $\{\X_k\}$ if the sequence loses compactness {due to volume loss at infinity of $\X_k(1)$}. {The main reason for this is that in contrast to, e.g.,~ the classical cluster problem, there is an asymptotically infinite contribution to the energy coming from the minimal surface $\partial \X_k(2) \cap \partial \X_k(3)$, which may twist around slowly as it transitions from the boundary data $\partial K$ at scale $R_k$ and connects to the bounded components of $\X_k(1)$. In doing so, it may transition to resembling some other singular minimal cone with lower area density than $K$ at scales $\ll R_k$, and so there is no clear way to propagate information about this minimal surface down to the scale of $\X_k(1)$; see below Theorem \ref{t:higher-dim} for further discussion of this point. The possibility of this type of behavior is the main difficulty in the problem, and would need to be ruled out in order to characterize the minimizers obtained in Theorem \ref{thm: summary}.}
\medskip

One possible way for a piece of  $\X_k(1)$ to escape to infinity is along approximately planar portions of the minimal surface $\partial \X_k(2) \cap \partial \X_{k}(3)$. Using the rigidity result in \cite{BroNov24}, we deduce that in this case, $\X_k$ locally looks like a rescaling of the standard lens cluster. In particular, its local energy contribution in a large ball, after rescaling by the escaping volume of $\X_k(1)$, is approximately equal to sum of the area of an equatorial disk in this ball and the {\it renormalized lens energy}
\begin{equation}\label{eqn: Lambda plane def}
    \Lambda_{\plane}(n) := P(\X_{\lens}(1)) -\omega_{n-1}\rho_n^{n-1}\,. 
\end{equation}
Here $\rho_n$ denotes the radius of the disc $\X_{\lens}(1) \cap \{x_n=0\}$.

\medskip

{As described above, this} is not the only way that $\X_k(1)$ can lose mass at infinity. It may be the case that a piece of $\X_k(1)$ drifts off to infinity along a non-planar portion of the minimal surface $\partial \X_k(2) \cap \partial \X_k(3)$. 
The key observation is that the latter loss of compactness, after a translation, will 
 still produce a local minimizer as in Theorem~\ref{thm: summary} {\it with a singular blowdown cone}. Thus we do not need to show that the sequence $\{\X_k({1})\}$ is compact. We only need to rule out the possibility that all of the mass of $\X_k(1)$ is lost via asymptotic lens behavior---the one asymptotic behavior whose limiting energy we can characterize. 
\medskip

A basic competitor argument shows that $\mathcal{P}(\X_k;B_R)$ is bounded above by the sum of $P(K; B_R)$ and the cone constant
\begin{equation}
    \label{eqn: Lambda C def}
    \begin{split}
    \Lambda(\partial K) :=\inf \{P(\X(1)) - \mathcal{H}^{n-1}({\partial K} \cap \X(1)^{(1)}) :& \text{ clusters $\X$ with }|\X(1)|=1, \\ 
    &  \X(2) = K \setminus \X(1), \X(3) = K^c\setminus \X(1) \}\,.
    \end{split}
\end{equation}
In this notation, $\Lambda_{\rm plane}(n) = \Lambda(\mathbb{R}^{n-1}\times \{0\})$ and we use the notation $\Lambda_{\rm plane}(n)$ because it is more concise. Note that $\Lambda(\partial K) \leq \Lambda_{\rm plane}(n)$ by testing \eqref{eqn: Lambda C def} with a sequence of competitors exhibiting asymptotic lens behavior. When this inequality is strict, we {rule out the possibility of all mass escaping in the form of a lens and thus} prove that a piece of the limit of the $\X_k$ yields a locally minimizing $(1,2)$-cluster that is not a standard lens cluster:

\begin{theorem}\label{t:higher-dim}
    Let $n \geq 8$ and {$K$ be a singular perimeter minimizing cone}. If
    \begin{align}\label{eq:comp assump in thm}
    \Lambda({\partial K}) < \Lambda_{\plane}(n)    
    \end{align}
    then there is a locally minimizing $(1,2)$-cluster $\X$ of $\Pcal$ in $\R^n$ such that for every blowdown $K_\infty$ of $\X(2)$, $ \partial K_\infty$ is a singular area-minimizing cone.

    Moreover, for any such blowdown $K_\infty$, we have the density comparison
    \begin{equation}\label{e:cone-density-comparison}
        \Hcal^{n-1}(\partial K_\infty\cap B_1(0)) \leq \Hcal^{n-1}(\partial K\cap B_1(0))\,.
    \end{equation}
\end{theorem}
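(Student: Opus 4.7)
\emph{Setup and energy upper bound.} The plan is to build $\X$ as a limit of minimizers $\X_k$ of the auxiliary penalized problem with $R_k\to\infty$, then extract from $\X_k(1)$ a ``bubble'' whose tangent cone at infinity is forced to be singular by the strict inequality \eqref{eq:comp assump in thm}. Existence of $\X_k$ follows from the direct method combined with the confining effect of $\cG_{R_k}$. As a key preliminary, I would test $\X_k$ against a competitor built from a nearly-optimal cluster in the definition \eqref{eqn: Lambda C def}, placed near the origin (where $\cG_{R_k}$ vanishes and $\partial K$ coincides with its own translate), obtaining
\[
\Pcal(\X_k;B_{4R_k}) - P(K;B_{4R_k}) \leq \Lambda(\partial K) + o(1), \qquad k\to\infty.
\]

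\emph{Concentration-compactness and energy accounting.} Since $|\X_k(1)|=1$ while the above excess is uniformly bounded, I would apply a concentration-compactness decomposition to $\X_k(1)$. Density estimates and uniform perimeter bounds for volume-constrained almost-minimizing clusters produce sequences of centers $x_k^{(j)}$, $j=1,\dots,J$, with $|x_k^{(i)}-x_k^{(j)}|\to\infty$ for $i\neq j$, such that, along a subsequence, $\X_k - x_k^{(j)}$ converges locally in $L^1$ to a locally minimizing $(1,2)$-cluster $\X^{(j)}$ with $|\X^{(j)}(1)|=v_j\in(0,1]$ and $\sum_j v_j = 1$. For each $j$ the tangent cone at infinity of $\X^{(j)}(2)$ is an area-minimizing hypercone $K^{(j)}$, and $\X^{(j)}$ is admissible in \eqref{eqn: Lambda C def} with $K$ replaced by $K^{(j)}$. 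Localized lower semicontinuity of perimeter around each $x_k^{(j)}$, together with the scaling relation that the infimum in \eqref{eqn: Lambda C def} for volume $v$ equals $v^{(n-1)/n}\Lambda(\partial K^{(j)})$, combines with the upper bound to yield
\[
\sum_{j=1}^{J} v_j^{(n-1)/n}\,\Lambda(\partial K^{(j)}) \;\leq\; \Lambda(\partial K).
\]

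\emph{Forcing a singular cone and concluding.} Suppose for contradiction that every $K^{(j)}$ is a half-space. Then $\Lambda(\partial K^{(j)}) = \Lambda_{\plane}(n)$ for all $j$, and combining with \eqref{eq:comp assump in thm} gives $\sum_j v_j^{(n-1)/n}<1$. However, since $0<(n-1)/n<1$, the map $t\mapsto t^{(n-1)/n}$ is subadditive on $[0,\infty)$, so $\sum_j v_j^{(n-1)/n} \geq (\sum_j v_j)^{(n-1)/n} = 1$, a contradiction. Hence some $K^{(j_0)}$ is a singular area-minimizing cone, and I set $\X := \X^{(j_0)}$. Local minimality of $\X$ is then standard: any compactly supported variation of $\X$ is transported to a variation of $\X_k$ near $x_k^{(j_0)}$, and one invokes the minimality of $\X_k$ together with lower semicontinuity. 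Every blowdown of $\X(2)$ is singular: if some blowdown were a half-space, the rigidity result of \cite{BroNov24} would force $\X$ to be a standard lens cluster, contradicting the existence of the singular blowdown $K^{(j_0)}$. Finally, the density comparison \eqref{e:cone-density-comparison} follows by dividing the energy upper bound by $R^{n-1}$ and letting first $k\to\infty$ then $R\to\infty$, since for any blowdown $K_\infty$ we have $\Hcal^{n-1}(\partial K_\infty \cap B_1) = \lim_{R\to\infty} R^{-(n-1)}\Hcal^{n-1}(\partial\X(2)\cap\partial\X(3)\cap B_R) \leq \lim_{R\to\infty} R^{-(n-1)} P(K;B_R) = \Hcal^{n-1}(\partial K\cap B_1)$.

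\emph{Main obstacle.} The hard part is making the concentration-compactness step fully rigorous. One must rule out energy loss into ``neck'' regions on the minimal interface $\partial\X_k(2)\cap\partial\X_k(3)$ that connect distinct bubbles, and verify that each hypercone $K^{(j)}$ produced by the bubble construction genuinely arises as a blowdown of the limit cluster $\X^{(j)}(2)$. This is precisely the difficulty flagged in the discussion beneath the theorem statement: the minimal interface may transition slowly between the boundary data $\partial K$ at scale $R_k$ and different, lower-density singular cones at intermediate scales. Overcoming this will require sharp excess-decay and monotonicity estimates for the minimal interface in the transition annular regions, together with careful selection of the translates $x_k^{(j)}$ at scales where the geometry of $\partial\X_k(2)\cap\partial\X_k(3)$ stabilizes.
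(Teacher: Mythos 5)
Your overall strategy — penalized problem, concentration-compactness with centers $x_k^{(j)}$, dichotomy between all-planar bubbles and at least one singular bubble, and a concavity/superadditivity contradiction — is indeed the paper's. But two of the steps you assert as established are actually where the paper does its work, and both contain genuine gaps as you have written them. First, the ``energy accounting'' inequality $\sum_j v_j^{(n-1)/n}\Lambda(\partial K^{(j)}) \leq \Lambda(\partial K)$ is not available for general limiting cones $K^{(j)}$. The limit cluster $\X^{(j)}$ is \emph{not} admissible in the variational problem \eqref{eqn: Lambda C def} defining $\Lambda(\partial K^{(j)})$ (its unbounded chambers are not literally $K^{(j)}\setminus\X^{(j)}(1)$ and its complement), and when $K^{(j)}$ is singular there is no rigidity result that pins down the local energy contribution. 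Precisely for this reason the paper confines the accounting to the all-planar case, where the rigidity theorem of \cite{BroNov24} identifies each $\X^{(j)}$ with a rescaled lens and the local energy is computed exactly via \eqref{eqn: scaling lens energy}; the lower bound on the total perimeter is then completed by an explicit gluing argument (inserting flat disks across each $B_{3R_0}(x_{k,j}+y_j)$ and invoking minimality of $\partial K$), which you flag as ``the main obstacle'' but do not carry out. You would also need to handle ``floater'' concentrations, where one of $\X^{(j)}(2)$, $\X^{(j)}(3)$ has finite volume and no blowdown cone exists — these cannot be matched to a half-space and require the separate lower bound of Lemma~\ref{lem: limit local min}(iii).

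Second, the density comparison \eqref{e:cone-density-comparison} does not follow from ``dividing the energy upper bound by $R^{n-1}$.'' The upper bound $\mathcal{P}(\X_k;B_{4R_k})\leq P(B)+P(K;B_{4R_k})$ controls the area ratio at scale $4R_k$ \emph{around the origin}, while the blowdown $K_\infty$ of $\X = \X^{(j_0)}$ is a scaling limit around the translated centers $x_k^{(j_0)}$ at fixed scales $R$ as $k\to\infty$. To transfer the bound from scale $4R_k$ at $0$ to scale $R$ at $x_k^{(j_0)}$ you need a monotonicity-type estimate for the interface $\partial^*\X_k(2)\cap\partial^*\X_k(3)$ — but this interface is not stationary near the bounded chamber $\X_k(1)$. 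The paper's Lemma~\ref{l:monotonicity-concentrations} addresses this by removing small balls $B_{\rho_{k,j}}(x_{k,j})$ around each concentration to obtain a stationary surface $M_k$ with a distributional singular boundary measure $\sigma_k$ supported on the removed sphere boundaries, and proves an almost-monotonicity inequality with explicit error terms involving $\sigma_k$ (equation \eqref{eq:large scale area bounds}). Your proof as written asserts the conclusion without supplying this mechanism.
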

Since $\X(2)$ locally minimizes perimeter outside some compact set, all possible blowdowns have boundaries that are area-minimizing cones; we refer the reader to Section \ref{s:prelim} for details. However, Theorem \ref{t:higher-dim} does not guarantee that this cone only has an isolated singularity or other properties (e.g. as in \cite{Simon_cylindrical}) that would guarantee the uniqueness of blowdowns. Thus, blowdowns are implicitly taken along a subsequence of scales $\rho_j \uparrow +\infty$.
\medskip

The statement of Theorem \ref{t:higher-dim} leaves open a natural question: does the blowdown $K_\infty$ of the limiting minimizing cluster coincide with the cone $K$ that was used in the setup of the minimization problems on large balls $B_{R_k}$? Absent some symmetrization technique which allows one to characterize directly the minimizers for the problem on $B_{R_k}$ (which does not seem immediate; cf.~ Remark \ref{remark:symmetries}), the only way to conclude $K=K_\infty$ would be to ``propagate" information about $\partial \X_k(2) \cap \partial \X_k(3)$ from scale $R_k$ around the origin to an order $1$ scale around a portion of $\X_k(1)$, which may not be remaining close to the origin. Such an argument would require two non-trivial pieces of information in addition to \eqref{eq:comp assump in thm}: first, that the area ratios of $\X_k$ at scale $4R_k$ and at an order one scale comparable to $\diam \X_k(1)$ essentially coincide, and second, that proximity to $\partial K$ at scale $4R_k$ combined with near constancy of area ratios allows for ``propagation" of proximity to $\partial K$ down to an order one scale. The former piece of information only seems clear if one knew that $\partial K$ was the least density singular area minimizing hypercone in $\mathbb{R}^n$ (which is open however for all $n\geq 8$), and the latter is equivalent to establishing the uniqueness of tangent cones to minimal surfaces, which is also open outside of specific cases. {Characterizing the blowdown, even in specific cases where $K$ is, e.g.,~ a Lawson cone, as will be the case in Theorem \ref{t:Simons-vs-lens} below, seems to be the first step in understanding qualitative properties of the minimizers we construct in this paper. If this can be done, then natural questions arise regarding symmetries and other characteristics of minimizers; see Remark \ref{remark:symmetries} below for further elaboration on this point.}
\medskip

To make use of the conditional existence result of Theorem~\ref{t:higher-dim}, we must next verify that the strict inequality \eqref{eq:comp assump in thm} holds for certain choices of perimeter minimizing cones $K$.
We recall the family of \emph{Lawson cones}\footnote{Such cones are sometimes also referred to as \emph{quadratic cones} in the literature, since they are quadratic varieties.} {for $k,l\in\N$}
\[
    C_{k,l}:= \left\{(x,y) \in \R^{k+1}\times\R^{l+1} : |x|^2 = \frac{k}{l} |y|^2\right\}\,.
\]
The cones $C_{k,l}$ are area-minimizing cones with an isolated singularity in $\R^{k+l+2}$ when $k+l > 6$ and when $k+l=6$, the cones $C_{3,3}$ (Simons' cone) and $C_{2,4}$ are the only two area-minimizing Lawson cones in $\R^8$ (see \cite{Lawlor91}*{Section 5.1, Theorem 5.5.2}). We further define the quantity
\begin{equation}
    \label{eqn: Lambda Lawson def}
    \Lambda_{\Lawson}(n) := \inf\{\Lambda(C_{k,l}) : k,l\in \N, \ k+l+2=n\}\,.
\end{equation}

Our second main result is the validity of the hypothesis $\Lambda_{\Lawson}(n) < \Lambda_{\plane}(n)$ in {all sufficiently low dimensions $n \geq 8$}, thus allowing us to conclude that there exist locally minimizing $(1,2)$-clusters with singular blowdowns in those dimensions.

\begin{theorem}\label{t:Simons-vs-lens}
    For $n \in \{8,\dots, 2700\}$, we have $\Lambda_{\Lawson}(n) < \Lambda_{\plane}(n)$. 
    As a consequence, in these dimensions, there exist locally minimizing $(1,2)$-clusters that blow down to singular cones.
\end{theorem}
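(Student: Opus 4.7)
The plan is to verify the hypothesis $\Lambda(\partial K) < \Lambda_{\plane}(n)$ of Theorem~\ref{t:higher-dim} for $K = C_{k,l}$ by combining an explicit upper bound on $\Lambda(C_{k,l})$ with the explicit expression for $\Lambda_{\plane}(n)$, then checking the resulting inequality for each $n \in \{8,\dots,2700\}$ and an appropriate choice of $(k,l)$. The second assertion of Theorem~\ref{t:Simons-vs-lens} then follows immediately from Theorem~\ref{t:higher-dim} applied with $K=C_{k,l}$ for a minimizing pair $(k,l)$.

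To upper bound $\Lambda(C_{k,l})$, I would exploit the $SO(k+1) \times SO(l+1)$ symmetry of $\partial C_{k,l}$ and restrict to competitors invariant under this action, reducing \eqref{eqn: Lambda C def} to a one-dimensional problem for a profile curve in the $(|x|,|y|)$-plane. The simplest candidate is the ball $B_r$ centered at the apex of the cone with $r=\omega_n^{-1/n}$, which gives
\[
\Lambda(C_{k,l}) \le n\,\omega_n^{1/n} - \omega_n^{-(n-1)/n}\,\mathcal{H}^{n-1}(\partial C_{k,l}\cap B_1)\,,
\]
where the cone area admits a closed-form expression via the standard parametrization $(t,\sigma_1,\sigma_2)\in[0,\infty)\times S^k\times S^l$. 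A quick check for $n=8$ shows that the ball alone does \emph{not} beat the planar lens, so a refined \emph{bent lens} competitor is needed: two rotationally symmetric pieces meeting $\partial C_{k,l}$ at $120^\circ$ along the symmetric $(n-2)$-dimensional junction $\{|x|/\sqrt{k}=|y|/\sqrt{l}=t_0\}$, optimized over the junction scale $t_0$ and the profile of the caps. This reduces to a tractable one-dimensional variational problem whose minimum provides the desired sharper upper bound.

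On the planar side, $\Lambda_{\plane}(n)=P(\X_{\lens}(1))-\omega_{n-1}\rho_n^{n-1}$ is explicit: the standard lens consists of two spherical caps of common radius $R_n$ meeting at $120^\circ$ along an $(n-2)$-sphere of radius $\rho_n=R_n\sqrt{3}/2$, with $R_n$ determined by the unit-volume constraint $2\omega_{n-1}R_n^n\int_0^{\pi/3}\sin^n\phi\,d\phi=1$. With closed-form expressions in hand on both sides, verifying $\Lambda_{\Lawson}(n)<\Lambda_{\plane}(n)$ reduces to a numerical comparison for each $n$, optimizing over pairs $(k,l)$ with $k+l+2=n$. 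The main obstacle is the rigorous control of this comparison uniformly across the stated dimensional range: as $n$ grows, the optimal Lawson cone density approaches the planar value and the standard lens becomes rounder, so the gap $\Lambda_{\plane}(n)-\Lambda_{\Lawson}(n)$ shrinks, and quantitative error estimates on the transcendental quantities involved (Gamma functions, incomplete beta integrals) are required to certify the strict inequality. The explicit upper threshold $n=2700$ stated in the theorem reflects the range in which the available rigorous bounds can successfully close this gap.
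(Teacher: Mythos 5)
Your proposal matches the paper's approach: construct an $SO(k+1)\times SO(l+1)$-invariant bent-lens competitor to bound $\Lambda(C_{k,l})$ from above, derive an exact special-function expression for $\Lambda_{\plane}(n)$, and certify the strict inequality by interval arithmetic with rigorous error bounds (FLINT/Arb), invoking Theorem~\ref{t:higher-dim} at the end; the paper does not optimize over the cap profile but simply fixes circular arcs in the $(u,v)$-quadrant meeting $\partial C_{k,l}$ at $120^\circ$ and the coordinate axes at $90^\circ$, which already suffices. One small clarification: the cutoff $n=2700$ reflects where the paper's interval-arithmetic code loses precision evaluating $\Lambda_{\plane}(n)$, not a point where the gap $\Lambda_{\plane}(n)-\Lambda_{\Lawson}(n)$ is known to vanish (the numerics show the gap decreasing but still positive at $2700$).
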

The first part of Theorem \ref{t:Simons-vs-lens} is established through an explicit construction of $(1,2)$-clusters to be used as competitors in the variational problems \eqref{eqn: Lambda Lawson def} and \eqref{eqn: Lambda C def}
to bound $\Lambda_{\Lawson}(n)$ from above. Then, the second part follows by an application of Theorem~\ref{t:higher-dim}.
The competitors are modeled on $C=C_{n/2-1,n/2-1}$ if $n$ is even, and modeled on $C=C_{(n-1)/2-1,(n-1)/2}$ if $n$ is odd. 
We then must compute $\Lambda_{\plane}(n)$ and  $P(\X(1))-\Hcal^{n-1}(C\cap\X(1))$ for these competitors $\X$, and compare the two quantities.
\medskip

In light of various symmetries, these computations may be reduced to one-dimensional integrals which, although can be expressed in terms of certain special functions, are not easy to estimate by hand to a desirable level of precision in all dimensions. Thus, part of the proof of Theorem \ref{t:Simons-vs-lens} is computer-assisted: we use the FLINT C library to provide us with a suitably precise calculation of these values in general. FLINT uses interval arithmetic and comes with rigorous error estimates; see \cite{GomezSerrano2019} for a survey on interval arithmetic and its role in computer-assisted proofs and numerical approximations with rigorous estimates.
\medskip

In even dimensions $n\in 2\N$, and in particular for $n=8$, we are able to obtain a reasonably clean closed formula for these integrals, thus allowing one in principle to compute both $P(\X(1))-\Hcal^{n-1}(C\cap\X(1))$ for our choice of competitor and $\Lambda_{\plane}(n)$ by hand {and avoid the need for computer assistance. We present these by-hand computations in detail for $n=8$ and obtain precise values for $\Lambda_{\rm plane}(8)$ and the estimate for $\Lambda_{\rm Lawson}(8)$, thereby providing a complete by-hand proof of Theorem \ref{t:Simons-vs-lens} for $n=8$; see Section~\ref{ssec: by-hand}}. 
\medskip

In recent years, there have been a number of instances of computer-assisted proofs for variational problems, such as \cites{DeSilvaJerison09,Mil24,DLGMV24}. The computer-assisted aspect of the work \cite{DeSilvaJerison09} bears particular resemblance to ours. Therein, the authors verify the existence of a singular minimizing conical solution to the one-phase Bernoulli free boundary problem on $\R^7$. This is established via a comparison to a suitable family of sub- and super-solutions to the one-phase Bernoulli problem. The construction of such families requires 
calculation of certain quantities which, as in the present setting, are difficult to perform by hand due to the presence of various special functions, and are therefore carried out in mathemtical computing software (in their case, Mathematica).

{
\begin{remark}
    During the preparation of this article, we have learned that Novaga, Paolini \& Tortorelli \cite{NovPaoTor25} independently have proven non-uniqueness of the standard lens cluster as a locally minimizing $(1,2)$-cluster.
\end{remark}}
\medskip
 
\subsection{Further Results and Discussion} We now discuss some further results shown in the paper and possible future directions.

\begin{remark}[A capillarity problem]
Using arguments similar to those of Theorem \ref{t:higher-dim}, we obtain minimizers for a capillarity problem on one side of singular minimizing cones with isolated singularities. This situation is simpler than Theorem \ref{t:higher-dim} because the cone is ``fixed", and so we defer precise statements and a description of the proof to Section \ref{sec:cap}.
\end{remark}

\begin{remark}[On possible symmetry properties]\label{remark:symmetries}
{One might expect that if a blowdown of a given locally minimizing $(1,2)$-cluster is a cone with certain symmetries, for instance $SO(k+1)\times SO(l+1)$ invariance in the case of $C_{k,l}$, then the cluster inherits the same symmetries, but this is a challenging and delicate problem. Aside from the obvious issue of a priori lack of uniqueness of blowdowns in general, note that even in the case where such uniqueness is known, such as when a blowdown has an isolated singularity, we generally expect much less symmetry than in the classical double bubble problem (and its weighted analogue). Indeed, in the latter case, one can obtain cylindrical symmetry due to the lack of boundary conditions (see \cite{Hut97}), whereas here our boundary data has merely $SO(k+1)\times SO(l+1)$ invariance. Similarly, in the work \cite{MilNee22}, the authors get spherical symmetry for the interfaces by exploiting convexity together with connectivity properties of neighboring components in the cluster (see Theorem 1.9 and Remark 1.11 therein). Moreover, in the work \cite{BroNov24} of the first and third authors, the symmetry properties of the minimizing cluster are obtained by symmetrizing with respect to a hyperplane and gluing this symmetrization into the cluster, from which the rigidity follows crucially from the precise expansion of area-minimizing hypersurfaces asymptotic to planes. The analogous expansion of area-minimizing surfaces asymptotic to singular cones do not appear to be sufficient to replicate such an argument, and symmetrization relative to Lawson cones is more complicated, in light of the presence of the chamber $\X(1)$, without which one could work with surfaces invariant under the symmetry group $SO(k+1)\times SO(l+1)$ as in \cite{Lawson72}.}
\end{remark}

\begin{remark}[Weighted local $(1,2)$-minimizers]
Weighted analogues of multiple bubble problems have also been studied, where the perimeter of the interface is replaced by a suitably weighted perimeter, see \cite{Law14}. In \cite{BroNov24}, the first and third authors showed that the ``weighted lens cluster" $\tilde{\X}$ (with interfaces given by two spherical caps and a portion of the plane $\{x_n=0\}$ up to rotation) locally minimizes the weighted energy
\begin{align*}
   \sum_{1\leq i <j\leq 3}c_{ij}\mathcal{H}^{n-1}(\partial^* \X(i) \cap \X(j))\qquad \qquad c_{ij}>0,\,\, c_{ij}<c_{ik}+c_{kj}\,,
\end{align*}
and that if additionally $c_{12}=c_{13}$, then uniqueness holds under a planar growth assumption. Therefore, the existence of local minimizers which blowdown to singular cones is natural in this setting also. The proof of Theorem \ref{t:higher-dim} should generalize to the case of local minimizers for the weighted energy as long as one replaces $\Lambda_{\rm plane}(n)$ with the suitable weighted quantity
\begin{align}\notag
  c_{12}\mathcal{H}^{n-1}(\partial^* \tilde{\X}(1)  \cap \partial^* \tilde{\X}(2)) + c_{13}\mathcal{H}^{n-1}(\partial^* \tilde{\X}(1)  \cap \partial^* \tilde{\X}(3)) - c_{23}\tilde{\rho}_n^{n-1}\,,
\end{align}
{where $\tilde \rho_n$ is the radius of the disc $\tilde \X(1) \cap \{x_0=0\}$} and similarly for $\Lambda_{\rm Lawson}(n)$. In addition, in the perturbative regime around the equal weights case, the weighted analogue of Theorem \ref{t:Simons-vs-lens} will clearly hold also since the relevant quantities are continuous in the weights. 
For the sake of concreteness, we do not address these questions here.
\end{remark}
\section{Notation and preliminaries}\label{s:prelim}
For a set of finite perimeter $E$ in $\R^n$, we let $P(E)$ denote its perimeter, $P(E; A)$ its relative perimeter in an open set $A\subset \R^n$, and $\partial^*E$ its reduced boundary. We refer the reader to \cite[Chaper 12]{Mag12} for the basics of sets of finite perimeter. In particular, in the definition of cluster perimeter \eqref{eqn: cluster perimeter}, in the summation we have 
\[
    P(\X(i); B_\rho(x)) = \Hcal^{n-1}(\partial^*\X(i)\cap B_\rho(x))
\]
 We will often refer to the boundaries $\partial^*\X(i) \cap\partial^*\X(j)$ between pairs of chambers as \emph{interfaces}.

 For any set $E$ of locally finite perimeter in the paper, we will always assume that we are working with a Lebesgue representative that satisfies 
\begin{align}\label{eq:norm choice}
\overline{\partial^* E} = \partial E\,,  
\end{align}
see \cite[Proposition 12.19]{Mag12}. This will ensure that density properties hold for the chambers of minimizing partitions.
{We let $A\Delta B$ denote the symmetric difference $(A\setminus B)\cup (B\setminus A)$ of two Borel sets $A$ and $B$.}

We will often be exploiting compactness of sequences of sets of finite perimeter with uniform perimeter bounds inside a compact set. We recall that this compactness is with respect to the metric $d(A,B) = \|\mathbf{1}_A - \mathbf{1}_B\|_{L^1}$, where $\mathbf{1}_A$ denotes the indicator on $A$ (see e.g. \cite{Mag12}*{Theorem 12.26}), and we will simply write $A_k \overset{L^1}{\to} A$ to denote the convergence of a sequence of finite perimeter sets $A_k$ with respect to this metric. If one merely has local uniform perimeter bounds, we instead obtain $L^1_\loc$ compactness, {and thus we write $A_k \overset{L^1_\loc}{\to} A$}. We analogously define the notion of $L^1$ and $L^1_\loc$ convergence for (1,2)-clusters, which we get under the assumption of a uniform or locally uniform $\Pcal$ upper bound respectively. We use the notation $\mathrm{o}_k(1)$ for terms that are converging to zero as $k \to +\infty$. We will write $B$ for the ball of volume $1$ centered at $0$ and $B_R$ for the ball of radius $R$ centered at $0$.

Given a local minimizer $\X$ of $\Pcal$ with $\Pcal(\X;B_\rho) \leq C \rho^{n-1}$, we recall that an $L^1_\loc$ limit along a subsequence of scales $\rho_j \uparrow +\infty$ for the rescaled sets
\[
    \frac{\X(2)}{\rho_j}
\]
is referred to as a blowdown of the set of {locally} finite perimeter $\X(2)$. Note that blowdowns not depend on the center, but may depend on the sequence $\{\rho_j\}$. If $\X$ is a locally minimizing $(1,2)$-cluster, then subsequential blowdowns always exist, and are locally minimizing $(0,2)$-clusters with conical interface. This follows from standard arguments; see e.g.~ \cite[Lemma 4.1, Corollary 4.6]{BroNov24}.

\section{Setup of minimization problem}\label{s:setup}
As described in the introduction, we will prove our main existence result Theorem \ref{t:higher-dim} via a compactness procedure for a sequence of minimizing $(1,2)$-clusters in balls of radii increasing to infinity. However, {we want to} avoid the challenges that would arise if  the bounded chamber $\X(1)$ for this sequence of minimizers attached to the boundary of the ball in which we are solving the minimization problem.  In order to circumvent this, we introduce a suitable penalization term to our energy along the sequence. We wish for the penalization functional to be chosen in such a way that it only penalizes $\X(1)$, and when comparing the energy of our sequence of minimizers to a sequence of competitors, the difference in the penalization terms decays to zero. This is needed to ensure that the limiting cluster is a local minimizer of $\Pcal$ without any penalization. 

With this in mind, we consider the functional
\[
    \cG_R(E;B_\rho(x)) := \int_{E \cap B_\rho(x)} g_R(|y|) \, dy\,,
\]
where
\begin{equation}\label{eq:gR def}
    g_R(t) := \begin{cases}
        \frac{t- {\sqrt{R}}}{\sqrt{R}} & t \geq {\sqrt{R}} \\
        0 & t \in [0, R)\,.
    \end{cases}
\end{equation}
We simply write $\cG_R(\X(1))$ to mean $\cG_R(\X(1); B_{4R}).$
The function $g_R$ is Lipschitz with Lipschitz seminorm $\tfrac{1}{\sqrt{R}}$. As a consequence, for any fixed $\rho>0$ and sequence of points $x_{k}$ with $|x_k| \leq 4R_k \uparrow +\infty$, the sequence $\{g_{R{k}}\}$
satisfies 
\begin{align}\label{eq:cons of lip vanishing}
   \sup_{y,z\in B_{4R_k}\cap B_\rho(x_k)}|g_{R_k}(y) - g_{R_k}(z) |\to 0\,. 
\end{align}
{This will be important for extracting a limiting local minimizer of $\Pcal$, as explained above.}
In turn let
\[
    \Ecal_R(\X; B_{4R}) := \Pcal(\X; B_{4R}) + {\cG_R(\X(1))}\,.
\]
We then consider the localized and penalized minimization problem
\begin{equation}\label{e:localized-min-problem}
     \mu_R := \inf \left\{\Ecal_R(\X; B_{4R}) : |\X(1)|=1, \ \X(2)\setminus B_{3R} = K\setminus B_{3R}, \ \X(3)\setminus B_{3R} = K^c\setminus B_{3R} \right\},
\end{equation}
for perimeter minimizing cone $K$ satisfying $\Lambda(\partial K) < \Lambda_{\plane}(n)$.
    
Note that the constraints in \eqref{e:localized-min-problem} in particular ensure that $\X(1) \subset B_{3R}$. Furthermore, observe that the existence of a minimizer for the problem \eqref{e:localized-min-problem} for each fixed $R$ follows by the the lower-semicontinuity of perimeter and the continuity of $\cG_R$ under $L^1$ convergence of clusters.

Let us collect some initial properties satisfied by minimizers $\X_R$ of $\mu_R$, beginning with various useful energy bounds. Taking as a competitor the $(1,2)$-cluster $\X'$ with $\X'(1)$ the volume-$1$ ball $B$ centered at the origin, $\X'(2) = K \setminus B$ and $\X'(3) = K^c \setminus B,$ we see that
\begin{equation}
    \label{eqn: energy bound 1}
    \Pcal(\X_R ;B_{4R}) \leq \Ecal_R(\X_R; B_{4R}) \leq \Ecal_R(\X'; B_{4R}) \leq P(B) + {P\big(K; B_{4R}\big)}.
\end{equation}
Together with the area minimizing property of the cone $\partial K$, this yields the (asymptotically sharp) bound
\begin{equation}
    \label{eqn: energy bound like Simons cone}
   P(K; B_{4R}) \leq  P(\X_R(h) ; B_{4R} ) \leq P(B) + {P\big(K; B_{4R}\big)} \quad \text{ for } h=2,3\,.
\end{equation}
{Noting that $\X_R(2) \setminus B_{3R}=K \setminus B_{3R}$ and thus $P(\X_R(2);B_{4R} \setminus B_{3R} )\geq P(K;B_{4R} \setminus B_{3R})$, this lower bound and the upper bound in \eqref{eqn: energy bound like Simons cone} yield
\begin{align}\label{eq:good bound on 3R}
    P(\X_R(2);B_{3R})\leq P(B) + P(K;B_{3R})\,.
\end{align}
}
Keeping in mind that $\X_R(1) \subset B_{3R}$, together with \eqref{eqn: energy bound 1} and the lower bound in \eqref{eqn: energy bound like Simons cone} yield
\begin{align}
    \notag
P(\X_R(1))  + {2} \cG_R(\X_R(1))&=2\mathcal{E}_R (\X_R;B_{4R}) - P(\X_R(2);B_{4R}) - P(\X_R(3);B_{4R})  \\ \notag
&\leq 2P(B) + 2P(K;B_{4R})- P(\X_R(2);B_{4R}) - P(\X_R(3);B_{4R}) \\ \label{eqn: energy bound sandwich}
&\leq 2 P(B)\,.
\end{align}
Using \eqref{eqn: energy bound sandwich}, we see how the penalization term $\cG_R$ forces confinement of $\X_R(1)$ to balls $B_{{\sqrt{R}}+s}$ in a measure sense: 
for any $s>0$, since $g(|y|) \geq s/\sqrt{R}$ for $y \in \X_R(1) \setminus B_{{\sqrt{R}}+s}$, from \eqref{eqn: energy bound sandwich} we have
\begin{equation}\label{eqn: measure confinement}
   |\X_R(1) \setminus B_{{\sqrt{R}}+s}|  \leq \frac{ \sqrt{R}\, P(B)}{s}\,.  
\end{equation}

{
Beyond \eqref{eqn: energy bound 1}, a different competitor gives us another energy upper bound, {which will come in useful later: 
\begin{equation}
    \label{eqn: energy bound from lens competitor}
 \mathcal{E}_R(\X_R;B_{4R}) \leq \Lambda_{\rm plane}(n) + \mathcal{P}(K,B_{4R}  )+ o_R(1)\,,
\end{equation}
where $o_R(1)$ converges to zero subsequentially as $R\to +\infty$. To see this, fix $z \in \partial^* K \cap B_{1/2}$, 
and let $z_R ={\sqrt{R}} \, z \in \partial^*K \cap B_{{\sqrt{R}}/2}$. Note that $(\partial K -z_R) \cap B_{20 \rho_n}$ converges smoothly to a hyperplane, subsequentially as $R\to +\infty$, which up to a rotation we may assume is $\{x_n=0\}$. We may therefore take a competitor that agrees with $K$ outside of the cylinder of radius $10\rho_n$ centered at $z_R$ and oriented by the plane $\{x_n=0\}$, with $z_R + \X_{\lens}(1)$ ``glued in" inside this cylinder. 
Noting that $\Gcal_R=0$ for this competitor by construction, the bound \eqref{eqn: energy bound from lens competitor} follows immediately.}
\\

Lastly, for any $B_\rho(x) \subset B_{4R}$, since ${\Lip(g_R)}\leq 1/\sqrt{R}$, 
another comparison argument but with a ball-shaped replacement inside $B_\rho(x)$ yields the local perimeter bound
\begin{align}\label{eq:first energy bound}
    \mathcal{P}(\X_R;B_\rho(x))
    \leq C(n)\rho^{n-1} + \frac{2\rho\,\max\{ 1, \omega_n \rho^n\}}{\sqrt{R}}\,.
\end{align}



\section{Compactness procedure}\label{s:compactness}
We will prove our main theorems via a compactness procedure for a sequence of minimizers $\X_k$ for the minimization problems \eqref{e:localized-min-problem} in $B_{4R_k}$ for scales $R_k \uparrow \infty$. Thus, throughout the remainder of this article, we will be considering a sequence $R_k \uparrow +\infty$, with associated minimization problems $\mu_{R_k}$ defined in \eqref{e:localized-min-problem}, and corresponding energies $\Ecal_{R_k}= \Pcal + \cG_{R_k}$. To simplify notation, we will henceforth let $\mu_k := \mu_{R_k}$, $\Ecal_k := \Ecal_{R_k}$ and $\cG_k := \cG_{R_k}$. The latter will have associated integrand $g_k := g_{R_k}$.

\subsection{Lower Density Estimates for $\X(1)$ and Consequences}
We prove lower volume density estimates for $\X_k(1)$ that are uniform in $k$. 
This lemma  has various important consequences, most importantly that the hard constraint $\X_k(1) \subset B_{3R_k}$ is not saturated.\\

\begin{lemma}\label{lem: density estimates}
    Let $\mathcal{X}_{k}$ be a minimizer of the variational problem $\mu_{k}$. There are constants $c_0>0$, $\rho_0\in (0,1]$ {and $N_0\in \mathbb{N}$}, independent of $k$, such that 
    \begin{align}
        \label{eqn: lower denisty estimate}
    |\mathcal{X}_k(1) \cap B_\rho(y)| &\geq c_0 \rho^n\qquad \mbox{for any } y\in \partial \X_k(1),\,\, \rho < \rho_0 \,.
        \end{align} 
{Moreover, there are points 
$\{x_{1,k},\dots ,x_{N_0, k}\} \subset B_{{ \Orm(\sqrt{R}_k)}}$
such that $\X_k(1) \subset \bigcup_{i=1}^{N_0} {B_4}(x_{k,i})$. In particular,}
\begin{align}\label{eq:nonsaturation}
    \X_k(1) \subset B_{{{\rm O}(\sqrt{R_k})}}\,.
\end{align}
\end{lemma}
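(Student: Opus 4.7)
The plan is to upgrade the minimality of \eqref{e:localized-min-problem} to a uniform $(\Lambda,r_0)$-almost-minimality statement for $\Pcal$ alone, from which the density estimate \eqref{eqn: lower denisty estimate} follows by a standard isoperimetric ODE argument; this estimate together with the measure-confinement \eqref{eqn: measure confinement} then covers $\X_k(1)$ by $N_0$ balls of bounded radius centered in $B_{\mathrm{O}(\sqrt{R_k})}$.

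\emph{Density estimate.} To produce the Lagrange multiplier uniformly in $k$, I would construct a one-parameter volume-fixing variation supported near a regular boundary point $z_k \in \partial^*\X_k(1)$ with $|z_k|\lesssim \sqrt{R_k}$. Such a $z_k$ exists because $|\X_k(1)|=1$ combined with \eqref{eqn: measure confinement} forces $\X_k(1)$---and hence a portion of its reduced boundary---into $B_{2\sqrt{R_k}}$. Since $g_k = \mathrm{O}(1)$ there and $\Lip(g_k) = 1/\sqrt{R_k}$, this variation perturbs $\Ecal_k$ by at most $\Lambda|t|$ per $t$ units of volume transferred, for some $\Lambda=\Lambda(n)$ independent of $k$. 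Fix $y\in\partial\X_k(1)$ and $\rho<r_0$, set $m(\rho):=|\X_k(1)\cap B_\rho(y)|$, and compare $\X_k$ with the cluster obtained by removing $\X_k(1)\cap B_\rho(y)$, absorbing the displaced mass into an adjacent chamber, and restoring the lost volume via the variation at $z_k$. Minimality then yields, for a.e.~$\rho>0$,
\[
    P(\X_k(1); B_\rho(y)) \leq \Hcal^{n-1}(\X_k(1)^{(1)}\cap \partial B_\rho(y)) + \Lambda\, m(\rho).
\]
Adding $\Hcal^{n-1}(\X_k(1)^{(1)}\cap \partial B_\rho(y)) = m'(\rho)$ to both sides and applying the isoperimetric inequality to $\X_k(1)\cap B_\rho(y)$ gives $c(n)\, m(\rho)^{(n-1)/n} \leq 2m'(\rho) + \Lambda\, m(\rho)$. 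For $\rho$ small the last term is absorbed, leaving $m(\rho)^{(n-1)/n}\leq C(n)m'(\rho)$; integrating---and using $m(\rho)>0$ for all $\rho>0$, a consequence of \eqref{eq:norm choice}---yields \eqref{eqn: lower denisty estimate}.

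\emph{Covering and non-saturation.} Let $\{y_1,\dots,y_N\}\subset\partial\X_k(1)$ be a maximal $\rho_0$-separated subset. The balls $\{B_{\rho_0/2}(y_i)\}$ are pairwise disjoint and each contains at least $c_0(\rho_0/2)^n$ of $\X_k(1)$ by \eqref{eqn: lower denisty estimate}, so $N\leq N_0:=\lfloor 2^n/(c_0\rho_0^n)\rfloor$, and maximality forces $\partial\X_k(1)\subset\bigcup_i B_{\rho_0}(y_i)$. Every interior point of $\X_k(1)$ lies within $\omega_n^{-1/n}$ of $\partial\X_k(1)$---else a larger inscribed ball would contradict $|\X_k(1)|=1$---so $\X_k(1)\subset\bigcup_i B_4(y_i)$ after absorbing $1+\omega_n^{-1/n}$ into a dimensional constant. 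Finally, if some $y_i$ lay outside $B_{C\sqrt{R_k}}$ with $C$ large, then \eqref{eqn: lower denisty estimate} would give $|\X_k(1)\cap B_{\rho_0}(y_i)|\geq c_0\rho_0^n$, whereas \eqref{eqn: measure confinement} with $s\sim(C-1)\sqrt{R_k}$ bounds the same volume by $P(B)/(C-1)$; incompatibility for $C$ large yields $\{y_i\}\subset B_{\mathrm{O}(\sqrt{R_k})}$ and hence \eqref{eq:nonsaturation}.

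\emph{Main obstacle.} The delicate point is the $k$-uniformity of $\Lambda$, given that $g_k$ ranges over $[0,\mathrm{O}(\sqrt{R_k})]$. The resolution rests on (i) \eqref{eqn: measure confinement} confining both $z_k$ and $y$ to the annulus where $g_k=\mathrm{O}(1)$, and (ii) $\Lip(g_k)=1/\sqrt{R_k}$ forcing any residual $\cG_k$-error in the comparison to vanish as $R_k\to\infty$. Modulo this bookkeeping, the argument is the classical almost-minimizer density estimate adapted to the clustering setting.
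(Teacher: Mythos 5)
Your argument for the density estimate follows the paper's broad template (competitor of the form $\X_k(1)\setminus B_\rho(y)$ plus a volume-fixing correction, isoperimetric ODE, integrate), and your covering argument is a valid alternative to the paper's (maximal $\rho_0$-separated set instead of a second application of the nucleation lemma). But there is a genuine gap in the central step, which is exactly the one your ``main obstacle'' paragraph waves at without actually closing.

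\emph{The uniform Lagrange multiplier is not justified.} You assert that a volume-fixing variation supported near a single reduced-boundary point $z_k\in\partial^*\X_k(1)$ with $|z_k|\lesssim\sqrt{R_k}$ costs $\Lambda(n)|t|$ per unit of transferred volume. The $\cG_k$-side of this is indeed controlled by \eqref{eqn: g bound}-type bounds and $\Lip(g_k)\to 0$ as you note, but the $\Pcal$-side is not. For the standard volume-fixing construction one needs both an \emph{upper} bound on the perimeter of $\X_k(1)$ near $z_k$ at a fixed scale (fine, via \eqref{eq:first energy bound}) and a \emph{lower} bound that keeps $\int_{\partial^*\X_k(1)} X\cdot\nu_{\X_k(1)}$ away from zero at a $k$-independent scale; the Lagrange multiplier is essentially the ratio of these. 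Merely knowing $z_k$ is a reduced boundary point gives no $k$-uniform lower bound: the scale at which $\X_k(1)$ is well-approximated by a half-space at $z_k$ may degenerate as $k\to\infty$. Note also that you cannot invoke the relative isoperimetric inequality at a fixed scale without already knowing a two-sided density estimate---which is what you are trying to prove. The paper resolves this by a genuine compactness argument: the nucleation lemma produces a point $x_{1,k}$ with $|\X_k(1)\cap B_1(x_{1,k})|\geq\alpha(\e)>0$ uniformly in $k$; \eqref{eqn: measure confinement} places $x_{1,k}$ in the region where $g_k$ is $\mathrm{O}(1)$; the translated clusters $\X_k-x_{1,k}$ converge along a subsequence to a limit $\Ycal_1$ with $|\Ycal_1(1)|>0$, and the relative isoperimetric inequality for the \emph{fixed} limit object gives $\Hcal^{n-1}(\partial^*\Ycal_1(1)\cap\tilde B)>0$. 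This fixed positive quantity is what makes the constant $C_0=C_0(\Ycal_1,n)$ in the volume-fixing variation $k$-independent (for large $k$, via $L^1_\loc$ convergence). Your proposal contains no analogue of this step.

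\emph{A secondary issue.} Your comparison modifies $\X_k$ in $B_\rho(y)$ (removal) and near $z_k$ (volume restoration). If $y$ lies near $z_k$ these modifications overlap and the isoperimetric-ODE estimate does not decouple. The paper constructs volume-fixing variations at two disjoint balls $B_{r_0}(x_{1,k}+z_1)$, $B_{r_0}(x_{1,k}+z_2)$ and, given $B_\rho(y)$ with $\rho<r_0$, picks whichever is disjoint from $B_\rho(y)$; you would need a similar device.

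\emph{On the covering step.} Your maximal-separated-set argument is correct in spirit and slightly more elementary than the paper's second use of the nucleation lemma. One small inconsistency: you claim the covering balls have radius $4$ ``after absorbing $1+\omega_n^{-1/n}$ into a dimensional constant,'' but $\omega_n^{-1/n}\sim\sqrt{n/(2\pi e)}$ exceeds $3$ once $n\gtrsim 150$, so for the dimensions the paper uses ($n\leq 2700$) the radius is a genuinely $n$-dependent constant $C(n)$, not $4$. This is cosmetic (the subsequent lemma only needs some $k$-independent radius), but as written the step is internally contradictory. The paper's version does produce $B_4$ because it compares against the fixed unit ball rather than the volume-$1$ ball.
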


Generally speaking, lower density estimates for minimizers of perimeter-driven functionals  (c.f.~ \cite[Theorem~16.14]{Mag12} or \cite{BroNov24}*{Lemma 4.1} in the setting of clusters) are proven using competitors $\X'$ with $\X'(1) = \X_k(1) \setminus B_r(y)$ for a point $y \in \partial \X_k(1)$ and scale $r< r_0$. For  volume-constrained problems such as \eqref{e:localized-min-problem}, to make $\X'$ admissible, the removed mass $\sigma:= |\X_k(1) \cap B_r(y)|$ must be added back to $\X'(1)$, and to achieve estimates with constants $C_0$ and $\rho_0$ independent of $k$,  the added energy must be controlled {\it uniformly in $k$}.

In order to obtain such a uniform volume fixing variation despite the fact that we have $\sup_k \|g_k\|_{C^0(B_{4R_k})} = + \infty$,  the first step of the proof of Lemma~\ref{lem: density estimates} is to show that a definite amount of mass of each $\X_k(1)$ concentrates in a region where $g_k$ is bounded uniformly in $k$, thereby allowing us to reintroduce the mass with a uniform error.\\

\begin{proof}[Proof of Lemmma \ref{lem: density estimates}]
The proof is divided into steps. In the first three steps we prove \eqref{eqn: lower denisty estimate}. 
Then in the fourth step we prove the ball containment and \eqref{eq:nonsaturation}.

\medskip

\noindent{\it Step 1: Nucleation.} 
Let $c(n)>0$ be the dimensional constant from the nucleation lemma \cite[Lemma 29.10]{Mag12}. Fix $0<\eps<\min \{1, \tfrac{P(\X_k(1))}{2nc(n)}\}$. Note that $\eps$ can indeed be chosen depending only on $n$, due to the isoperimetric inequality and the fact that $|\X_k(1)|=1$.
 Applying the nucleation lemma to $\X_k(1)$, and recalling \eqref{eqn: energy bound sandwich} and the fact that $|\X_k(1)|=1$, 
we may find points
$\{x_{1,k},\dots , x_{N_k,k}\}$, with $N_k\leq \big(\tfrac{P(\X_k(1))}{c(n)\eps}\big)^n \leq \big(\tfrac{2P(B)}{c(n)\eps}\big)^n$, such that
{
\begin{equation}
    \label{eqn: cover almost all}
  \Big|\X_k(1)\setminus \bigcup_{i=1}^{N_k} B_2(x_{k,i})\Big| < {\eps}. 
\end{equation}
and} for each $x_{k,i}$,
\begin{align}
    \label{eqn: nucleation 1}
  |B_1(x_{k,i}) \cap \X_k(1)| \geq \bigg(c(n) \frac{\e}{P(\X_k(1))} \bigg)^n \geq \bigg(c(n) \frac{\e}{2P(B)} \bigg)^n =:\alpha{(\e)}\,,
  \end{align}
where $B$ is the ball of volume $1$. On the other hand, letting $s = s_k = 2\sqrt{R_{k}}P(B)/\alpha{(\e)}$ in \eqref{eqn: measure confinement}, we have $|\X_k(1)\setminus B_{{\sqrt{R_k}}+s_k}| \leq \alpha{(\e)}/2$. As a consequence of this and \eqref{eqn: nucleation 1}, we must have
\[
    B_1(x_{k,i})\cap B_{{\sqrt{R_k}}+s_k} \neq \varnothing\,,
\]
and thus we conclude that
\[
B_{1}(x_{k,i}) \subset B_{{\sqrt{R_k}} + s_k+2} \qquad \text{ for each } \, i =1,\dots , N_k\,.
\]
Importantly, this guarantees that 
\begin{equation}
    \label{eqn: g bound}
g_k(|x_{k,i}|) \leq g_k({\sqrt{R_k}} + s_k + 1)=\frac{2P(B)}{\alpha}+\frac{1}{\sqrt{R_k}}\leq C
\end{equation}
for a constant independent of $k$.

In light of the uniform bound on $N_k$, up to passing to an unrelabeled subsequence, we may assume that each $N_k$ is equal to the same number $N_0$. Thanks to the uniform perimeter bound \eqref{eq:first energy bound}, for each $i=1,\dots, N_0$, the sequence $\X_k- x_{k,i}$ converges locally in $\mathbb{R}^n$ as $k\to \infty$ to a cluster $\mathcal{Y}_i$ (which may have an empty second or third chamber)
with locally finite perimeter. Furthermore, we have $|\mathcal{Y}_i(1)|>0$ by \eqref{eqn: nucleation 1}.

\medskip

\noindent{\it Step 2: Volume fixing variations.}
We will perform volume fixing variations in $\mathcal{Y}_1(1)$ using \eqref{eqn: g bound}.
By the relative isoperimetric inequality and $|\mathcal{Y}_1(1)|\leq 1$, $\mathcal{H}^{n-1}(\partial^* \mathcal{Y}_1(1) \cap \tilde{B}) >0$ where $\tilde{B}$ denotes the ball of volume $2$ centered at the origin.
Thus, since we also have the uniform perimeter bound \eqref{eq:first energy bound}, arguing as in \cite[Lemma 17.21]{Mag12} (see also \cite{NovTopIhs23}*{Lemma 2.3}), we get the following volume-fixing variation lemma: there exist $\sigma_0,r_0  \in (0,1]$ and $z_1,z_2\in B_2$ with $B_{2r_0}(z_1) \cap B_{2r_0}(z_2)=\varnothing$, all depending on $\mathcal{Y}_1(1)$ and $n$, such that for any $|\sigma|<\sigma_0$ $k\in \mathbb{N}$, and $m\in\{1,2\}$, we can find a $(1,2)$-cluster $\X_k^{\sigma,m}$ with $\X_k^{\sigma,m}(j)\Delta \X_k(j) \Subset B_{r_0}(x_{1,k}+z_m)$ for $j=1,2,3$ and
\begin{itemize}
   \item[(i)] $|\X_k^{\sigma,m}(1)| = |\X_k(1)| + \sigma,$
    \item[(ii)] $\Pcal(\X_k^{\sigma,m};B_{r_0}(x_{1,k}+z_m)) \leq \Pcal(\X_k;B_{r_0}(x_{1,k}+z_m)) + C_0(\mathcal{Y}_1,n) \sigma,$
    \item[(iii)]$\cG_k(\X_k^{\sigma,m}(1);B_{r_0}(x_{1,k}+z_m)) 
    \leq \cG_k(\X_k(1);B_{r_0}(x_{1,k}+z_m)) +C_1 \, \sigma\, $
\end{itemize}
where $C_1=C+2/3$ for the constant $C$ in \eqref{eqn: g bound}. In the last estimate we have used  the observation that 
\[
\cG_k(\X_k^{\sigma,m}(1);B_{r_0}(x_{1,k}+z_m)) \leq \cG_k(\X_k(1);B_{r_0}(x_{1,k}+z_m))+
    \int_{(\X_k^{\sigma,m}(1)\Delta \X_k(1))\cap B_2(x_{1,k})} g_k(|y|)\,dy\,,
\]
together with the fact that  $g_k(|y|) \leq   g_k(|x_{1,k}|) + \tfrac{2}{\sqrt{R_k}} \leq C_1$ on $B_2(x_{1,k})$ thanks to \eqref{eqn: g bound}.

\medskip

\noindent{\it Step 3: Competitor construction.}
We are now in a position to proceed as in \cite{BroNov24}*{Proof of Lemma 4.1}. We include the details here for the purpose of clarity, due to the presence of the unbounded penalization term $\cG_k$, which does not exist in \cite{BroNov24}. Let $\rho_0 \leq \omega_n^{-1}\sigma_0^{1/n}$ and fix $y\in \partial \X_k(1)$.  Choose any $\rho <\min\{\rho_0,r_0\}$ and any $k\in \mathbb{N}$. {By our choice of $\rho<r_0$, we must have $B_\rho(y) \cap B_{r_0}(x_{k,i}+z_m)=\varnothing$ for at least one of $m=1,2$.}
Without loss of generality assume {that this holds for $m=1$ and that}
\begin{equation}\label{e:larger-unbdd-chamber}
	\Hcal^{n-1}(\partial^*\X_k(2)\cap \partial^*\X_k(1)\cap B_{\rho}(y)) \leq \Hcal^{n-1}(\partial^*\X_k(3)\cap \partial^*\X_k(1)\cap B_{\rho}(y))\,.
\end{equation}
Let $\sigma {=m_k(\rho)} = |\X_k(1) \cap B_{\rho}(y)|$ and let $\X'_k$ be the cluster formed by taking $\X'_k(1) = \X_k^{\sigma,1}(1) \setminus B_\rho(y)$, $\X'_k(2) = \X_k^{\sigma,1}(2)$ and $\X_k'(3) = \X_k^{\sigma,1}(3) \cup (\X_k(1)\cap B_\rho(y))$. 
%
%

The volume fixing variation of Step 2 yields
\begin{align}\label{eq:pot estimate}
    \cG_k(\X_k'(1)) \leq C_1 m_k(\rho) + \cG_k(\X_k(1))\,.
\end{align}

Testing the minimality of $\X_k$ for $\mu_k$ against the competitor $\X_k'$ and using \eqref{eq:pot estimate} and the definition of $\X_k'$ on $B_{r_0}(x_{k,i}+z_1))$, we therefore arrive at
\begin{align*}
    \Pcal(\X_k;B_{4R_k}) &= \Ecal_k(\X_k;B_{4R_k}) - \cG_k(\X_k(1)) \\
    &\leq \Ecal_k(\X_k';B_{4R_k}) - \cG_k(\X_k(1)) \\
    &\leq \Pcal(\X_k';B_{4R_k}) + C_1 m_k(\rho) \\ 
    &\leq \Pcal(\X_k';B_{4R_k}\setminus B_{r_0}(x_{k,i}+z_1)) + \Pcal(\X_k;B_{r_0}(x_{k,i}+z_1)) + \Lambda m_k(\rho)\,,
\end{align*}
where $\Lambda = C_1 + C_0$, for $C_0$ as in property (ii) of the volume fixing variation. Exploiting the definition of $\X_k'$ and the facts that $\X_k(1)\subset B_{3R_k}$ and $\X_k$ and $\X_k'$ agree outside of $B_{\rho}(y)\cup B_{r_0}(x_{k,i}+z_1)$,
this in turn gives
\begin{align*}
	&\mathcal{H}^{n-1}(\partial^*\X_k(1)\cap \partial^*\X_k(2)\cap B_\rho(y) ) + \mathcal{H}^{n-1}(\partial^*\X_k(1)\cap \partial^*\X_k(3)\cap B_\rho(y) ) \\
	&\qquad\leq \mathcal{H}^{n-1}(\partial^*\X_k(1)\cap \partial^*\X_k(2)\cap B_\rho(y) ) + \Hcal^{n-1}(\X_k(1)^{(1)}\cap \partial B_\rho(y) )  + \Lambda m_k(\rho)\,.
\end{align*}
Recalling \eqref{e:larger-unbdd-chamber} and the definition of $m_k$, by applying the isoperimetric inequality,
this reduces to
\[
	\frac{1}{2}{n\omega_n^{{1}/{n}}}m_k(\rho)^{{n-1}/{n}} \leq \tfrac{1}{2}P(\X_k(1)  \cap B_\rho(y)) \leq \frac{3}{2} m_k'(\rho)  + \Lambda m_k(\rho)\qquad \mbox{for a.e. $\rho<\rho_0$}\,.
\]
Provided that we additionally take $\rho_0 $ small enough in terms of $n$ and $\Lambda$, we may additionally absorb $\Lambda m_k(\rho)$ on the left-hand side to obtain
\[
    {\tilde c(n)}  m_k(\rho)^{(n-1)/n} \leq m_k'(\rho)\,.
\]
Integrating this inequality and using {$m_k(\rho)>0$} for every $\rho>0$ (which follows from \eqref{eq:norm choice}), \eqref{eqn: lower denisty estimate} for $\X_k(1)$ follows immediately.

\medskip

\noindent{\it Step 4: Containment in balls}. 
Next, fix $0< \e' < \min\{ 1, P(B)/2nc(n), c_0 \rho_0^n/2, {\omega_n}/2\}$.
Apply the nucleation lemma \cite[Lemma 29.10]{Mag12} again with this (possibly smaller) choice of $\e'>0$
to find points $\{x'_{k,i}\}_{i=1}^{N_k'}$ with $N_k' \leq \big(\tfrac{2P(B)}{c(n)\eps'}\big)^n$ satisfying \eqref{eqn: cover almost all} and \eqref{eqn: nucleation 1}.
As in Step 1, we have $B_1(x_{k,i}')\subset B_{{\sqrt{R_k}}+s_k'+2}$ for each $k=1,\dots, N_k'$ where we now let $s_k'= 2\sqrt{R_{k}}P(B)/\alpha{(\e')}$. Moreover, again up to passing to an unrelabeled subsequence, we may assume $N_k'$ is constantly equal to some number $N'.$ 

We claim that
\begin{equation}\label{eqn: contained in balls of radius 3}
\X_k(1) \subset \bigcup_{i=1}^{N'} {B_4}(x_{k,i}').
\end{equation}
Indeed, if $x_0 \in \partial \X_k(1)$, then $x_0 \in \bigcup_{i=1}^{N_0} B_{3}(x_{k,i}'),$ since otherwise \eqref{eqn: lower denisty estimate} together with the fact that $\rho_0\leq 1$ implies
\[
 \Big|\X_k(1)\setminus \bigcup_{i=1}^{N'} B_2(x_{k,i}')\Big| \geq |\X_k(1) \cap B_{\rho_0}(x_0) | \geq c_0 \rho_0^n,
\]
contradicting \eqref{eqn: cover almost all} {for our choice of $\eps'$}. 
Thus, we have $\partial \X_k(1) \subset \bigcup_{i=1}^{N'} B_3(x_{k,i}')$.
{Now, suppose that there exists a point $y_0 \in \X_k(1) \setminus \bigcup_{i=1}^{N_0} B_{4}(x_{k,i}')$. Then, since  $\partial \X_k(1) \subset \bigcup_{i=1}^{N'} B_3(x_{k,i}')$, we must have $\dist(y_0, \partial \X_k(1)) \geq 1$. But this in particular implies that
\[
    B_1(y_0) \subset \X_k(1) \setminus \bigcup_{i=1}^{N'} B_3(x_{k,i}')\,,
\]
which once again contradicts \eqref{eqn: cover almost all}} { since $\e'< |B_1(y_0)|/2$.} 
This shows the containment claim of the lemma,  from which the diameter bound \eqref{eq:nonsaturation} follows directly.
\end{proof}

{ 
Having shown the diameter bound \eqref{eq:nonsaturation}, we can now establish upper and lower density estimates in $B_{2R_k}$ for each of the three chambers.
\begin{corollary}\label{cor: more density estimates}
    Let $\mathcal{X}_{k}$ be a minimizer of the variational problem $\mu_{k}$. 
    There are constants $c_1\in (0,1)$ and $\rho_1\in (0,1]$, independent of $k$, such that for any $h \in \{1,2,3\},$ $y \in \partial X_k(h) \cap B_{2R_k}$, and $\rho \in (0,\rho_1],$
        \begin{align}
c_1 \omega_n\rho^n    \leq  |\X_k(h) \cap B_\rho(y)| \leq (1-c_1)\omega_n \rho^n \label{e:lower-density-2-3}
        \end{align} 
\end{corollary}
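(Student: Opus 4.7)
The plan is to establish each density estimate by a ball-comparison argument analogous to Step 3 in the proof of Lemma \ref{lem: density estimates}. The uniform-in-$k$ inputs we inherit are: the bounded potential $g_k \le C_1$ near the nucleation points of $\X_k(1)$ (Step 1 of Lemma \ref{lem: density estimates}); the volume-fixing variation from Step 2, whose perimeter and penalization cost are bounded by $\Lambda\sigma$ in the varied volume $\sigma < \sigma_0$ with $\Lambda$ independent of $k$; and the diameter bound \eqref{eq:nonsaturation}, which together with the Lipschitz property of $g_k$ ensures $g_k \le C$ on any fixed neighborhood of $\X_k(1)$, uniformly in $k$. Note that $B_\rho(y) \subset B_{4R_k}$ for any $y \in B_{2R_k}$, $\rho \le 1$, and $k$ large.

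The lower density for $h=1$ is \eqref{eqn: lower denisty estimate}. For $h \in \{2, 3\}$ and $y \in \partial \X_k(h) \cap B_{2R_k}$, I would split based on whether $\dist(y, \X_k(1)) \geq \rho_1$ or not. In the \emph{far} regime $\dist(y, \X_k(1)) \geq \rho_1$, the ball $B_\rho(y)$ is disjoint from $\X_k(1)$ for $\rho < \rho_1$, and the competitor $\X'_k$ reassigning $\X_k(h) \cap B_\rho(y)$ to the other infinite chamber preserves both $|\X_k(1)|$ and $\cG_k$; minimality and the relative isoperimetric inequality applied to $\X_k(h) \cap B_\rho(y)$ then yield $c(n)\, n_k(\rho)^{(n-1)/n} \leq 2\, n_k'(\rho)$, where $n_k(\rho) := |\X_k(h) \cap B_\rho(y)|$. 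In the \emph{near} regime $\dist(y, \X_k(1)) < \rho_1$, the diameter bound gives $y \in B_{\Orm(\sqrt{R_k})}$, and hence $g_k$ is bounded on $B_{2\rho_1}(y)$ uniformly in $k$; a second competitor reassigning $\X_k(h)\cap B_\rho(y)$ directly to $\X_k(1)$ and then correcting via Step 2's volume-fixing variation has total perimeter and penalization cost at most $\Lambda n_k(\rho)$, and thus yields $\mathcal{H}^{n-1}(\partial^*\X_k(h)\cap \partial^*\X_k(1)\cap B_\rho(y)) \leq n_k'(\rho) + \Lambda n_k(\rho)$. Summing the two bounds on the $(h,h')$- and $(h,1)$-interfaces and applying isoperimetric gives $c(n) n_k(\rho)^{(n-1)/n} \leq 3\, n_k'(\rho) + \Lambda n_k(\rho)$, which integrates---after taking $\rho_1$ small to absorb the $\Lambda$ term on the left---to $n_k(\rho) \geq c_1 \omega_n \rho^n$.

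The upper density estimate at $y \in \partial \X_k(h) \cap B_{2R_k}$ follows by applying the same lower-density scheme to the complementary set $E_k := \X_k(h)^c \cap B_\rho(y)$, a set of locally finite perimeter whose topological boundary contains $y$ by the normalization \eqref{eq:norm choice}. The relevant competitor fills $B_\rho(y)$ entirely with $\X_k(h)$. For $h = 1$ we have $y \in \partial \X_k(1) \subset B_{\Orm(\sqrt{R_k})}$, so the direct $\cG_k$-cost of this filling together with Step 2's volume-fixing correction is controlled by $\Lambda \tilde m_k(\rho)$ with $\tilde m_k(\rho) := |E_k|$; for $h \in \{2,3\}$ the situation splits once more on $\dist(y, \X_k(1))$, with only the near regime requiring the volume-fixing correction. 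In all cases, minimality and the relative isoperimetric inequality applied to $E_k$ yield $c(n) \tilde m_k(\rho)^{(n-1)/n} \leq C \tilde m_k'(\rho) + \Lambda \tilde m_k(\rho)$, which integrates to $\tilde m_k(\rho) \geq c_1 \omega_n \rho^n$.

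The main technical point, handled exactly as in Step 3 of Lemma \ref{lem: density estimates}, is that the volume-fixing variation's center can always be chosen from $\{x_{1,k}+z_1, x_{1,k}+z_2\}$ disjoint from $B_\rho(y)$: since $B_{2r_0}(z_1)\cap B_{2r_0}(z_2)=\varnothing$, at most one of the two centers can fall within $B_{\rho+r_0}(y)$ for $\rho<r_0$, so the other is always available. With this choice all relevant constants depend only on $n$ and the data from Lemma \ref{lem: density estimates}, and $\rho_1$ can be fixed uniformly in $k$.
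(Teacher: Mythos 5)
Your argument is correct, but it diverges from the paper's proof in two ways, one minor and one more significant.

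For the lower density bound on $\X_k(h)$, $h\in\{2,3\}$, the paper simply notes that the comparison argument of Steps~1--3 of Lemma~\ref{lem: density estimates} carries over with $\X_k(h)$ in place of $\X_k(1)$: one takes the competitor that reassigns $\X_k(h)\cap B_\rho(y)$ to whichever adjacent chamber has the \emph{larger} interface with $\X_k(h)$ in $B_\rho(y)$ (cf.~\eqref{e:larger-unbdd-chamber}), using the volume-fixing variation only when that chamber is $\X_k(1)$. Your near/far splitting with two separate competitors whose bounds are then summed is a valid alternative; it is logically interchangeable with the paper's WLOG-choice and has the small advantage of not needing to decide in advance which interface is larger. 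Both routes rest on the same uniform inputs (boundedness of $g_k$ near $\X_k(1)$ guaranteed by \eqref{eq:nonsaturation} and the Lipschitz bound on $g_k$, the Step~2 volume-fixing variation, and the ability to choose a variation center disjoint from $B_\rho(y)$), so there is no gap here.

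For the upper density bound, however, the paper's argument is substantially shorter than what you propose and is worth knowing: since the chambers partition $\R^n$ and, by \eqref{eq:norm choice}, $\partial\X_k(h)\subset\bigcup_{h'\neq h}\partial\X_k(h')$, any $y\in\partial\X_k(h)\cap B_{2R_k}$ lies on $\partial\X_k(h')$ for some $h'\neq h$. The lower density bound for $\X_k(h')$ then gives
\[
|\X_k(h)\cap B_\rho(y)|\le |B_\rho(y)\setminus\X_k(h')|=\omega_n\rho^n - |\X_k(h')\cap B_\rho(y)|\le (1-c_1)\omega_n\rho^n ,
\]
with no further comparison construction needed. Your proposal instead repeats the entire competitor--isoperimetric scheme for the complement $\X_k(h)^c\cap B_\rho(y)$ (filling the ball with $\X_k(h)$ and correcting volume). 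While this can be made to work, it is considerably heavier and introduces avoidable complications (the complement is a union of two chambers, the filling competitor may remove $\X_k(1)$ mass far from any nucleation point, etc.). You should replace that paragraph with the disjointness observation above.
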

\begin{proof}
    Thanks to conclusion \eqref{eq:nonsaturation} of Lemma~\ref{lem: density estimates} {(which in particular implies that $g_k$ is bounded uniformly in $k$ in a neighborhood of $\X_k(1)$)}, the argument of Steps 1-3 of Lemma~\ref{lem: density estimates} can be repeated with $\X(h)$, {$h\in \{2,3\}$,} in place of $\X(1)$ to derive the lower bound of \eqref{e:lower-density-2-3}. The reason we restrict to $y\in \partial \X_k(h) \cap B_{2R_k}$, $h\in \{2,3\}$, as opposed to \eqref{eqn: lower denisty estimate}, which is for any $y\in \partial \X_k(1)$, is that for $y$ nearby $\partial B_{3R_k}$, the boundary conditions only allow removal of $\X_k(h)\cap B_\rho(y)$ from {$\X_k(h)$} when $h=1$.  Next, notice that lower density estimates for all three chambers directly imply the corresponding upper density estimates: fix $x \in {\partial} \X_k(1).$ Then  $x \in {\partial} \X_k(h)$ for either $h=2$ or $h=3$; without loss of generality suppose it is $h=2.$ Then by the lower density estimate for $\X_k(2)$ and the fact that the chambers are disjoint, we have 
    \[
    |\X_k(1)\cap B_\rho(x)| \leq | B_\rho(x) \setminus \X_k(2)| = \omega_n\rho^n - |B_\rho(x) \cap \X_k(2)| \leq  (1-c_1)\omega_n\rho^n. 
    \]
    The analogous argument yields the upper density estimates for $h=2,3.$
\end{proof}
}

\begin{remark}[Euler-Lagrange equations/regularity]\label{rmk: regularity}
 Using the uniform volume fixing variations from the previous lemma and the Lipschitz bound for $g_k$, one can show that there are constants $\Lambda$ and $r_0$ such that for all $k$, the restriction of $\X_k$ to $B_{3R_k}$ satisfies the almost-minimizing property
    \begin{align}\label{eq:lambda min cluster}
     \Pcal (\X_k; B_{3R_k})\leq \Pcal(\X';B_{3R_k}) + \Lambda\sum_{i}|\X_k(i) \Delta \X'(i)|
    \end{align}
    whenever $\X'$ is a $(1,2)$-cluster with $\X_k(i) \Delta \X'(i)\cc B_r(y)$ for some $B_r(y)\subset B_{3R_k}$ and $r<r_0$ \cite[Equation 1.1]{MagLeo17}; alternatively, $S_k:=\cup_i \partial \X_k(i)$ is $(M,\e,\delta)$-minimizing in $B_{3R_k}$ in the sense of \cites{Alm76,JTaylor76}. As a consequence, \cite[Theorem 3.10]{ColEdeSpo22} applies and gives precise information about the top three strata of $S_k$ in terms of planes, $\mathbf{Y}$-cones, and $\mathbf{T}$-cones. Also, one may derive the Euler-Lagrange equations for $S_k$ incorporating the volume constraint and the potential $g_k$. However, we do not need these properties in this paper.
\end{remark}

\subsection{Concentrations of $\X_k$ and their limit minimization problems}\label{ss:conc}
Let $\X_k$ be minimizers for the problem $\mu_k$ as described in the preceding section. In order to complete our compactness procedure as $k\to \infty$, we first identify a uniform (in $k$) number of disjoint balls whose union contains $\X_k(1)$.

\begin{lemma}\label{lemma:identifying concentrations}
There exist $N\in \mathbb{N}$ and $R_0>0$ such that, up to a subsequence, the minimizers $\X_k$ of the variational problem $\mu_k$ satisfy the following property:

\smallskip

for each $k$, there are $x_{k,i}\in B_{{\Orm(\sqrt{R_k})}}$, $1\leq i \leq N$, such that the balls $B_{R_0}(x_{k,i})$ are pairwise disjoint and 
$\X_k(1) \subset \cup_{i=1}^N B_{R_0}(x_{k,i})$.
\end{lemma}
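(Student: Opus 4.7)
The plan is to upgrade the covering $\X_k(1) \subset \bigcup_{i=1}^{N_0} B_4(x_{k,i})$ provided by Lemma~\ref{lem: density estimates} to one by finitely many \emph{pairwise disjoint} balls of a uniform-in-$k$ radius $R_0$, after passing to a subsequence. The key difficulty is that pairwise distances among the $N_0$ points can drift to infinity at rate $\Orm(\sqrt{R_k})$, so no fixed clustering scale works simultaneously for all $k$; the main idea is a pigeonhole argument on geometric scales to extract a common ``gap scale'' along a subsequence.

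First I would fix a large constant $M > 2N_0 + 20$ depending only on $N_0$, and consider the $\binom{N_0}{2}+1$ geometrically spaced intervals
\[
    I_\ell := \bigl[M^\ell,\, M^{\ell+1}\bigr]\,, \qquad \ell = 0, 1, \dots, \binom{N_0}{2}\,.
\]
For each $k$ there are at most $\binom{N_0}{2}$ pairwise distances $|x_{k,i}-x_{k,j}|$, so by pigeonhole at least one interval $I_{\ell_k^*}$ contains none of them. Since $\ell_k^* \in \{0,\dots,\binom{N_0}{2}\}$ takes only finitely many values, after extracting a subsequence we may assume $\ell_k^* = \ell^*$ is independent of $k$.

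Next I would cluster the points: declare $x_{k,i}$ and $x_{k,j}$ equivalent if they are joined by a chain of points in $\{x_{k,1},\dots,x_{k,N_0}\}$ with consecutive distances at most $M^{\ell^*}$. By the gap property, any two of the $x_{k,\cdot}$ are at distance either $\leq M^{\ell^*}$ or $\geq M^{\ell^*+1}$, so distinct equivalence classes are separated by at least $M^{\ell^*+1}$, while any two points in the same class lie within distance $(N_0-1) M^{\ell^*}$ of each other by the triangle inequality along a shortest chain. Setting $R_0 := N_0 M^{\ell^*} + 10$ and selecting one representative $x_{k,i}$ from each equivalence class (which automatically lies in $B_{\Orm(\sqrt{R_k})}$ by Lemma~\ref{lem: density estimates}), the choice $M > 2N_0+20$ gives $M^{\ell^*+1} > 2R_0$, so the balls $B_{R_0}$ around representatives from distinct classes are pairwise disjoint; moreover each such ball contains all radius-$4$ balls $B_4(x_{k,j})$ for $x_{k,j}$ in its class, since $R_0 > (N_0-1)M^{\ell^*} + 4$. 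Since the number of classes is bounded by $N_0$, a further subsequence fixes it to some constant $N$, yielding the claim.

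The main obstacle is precisely the uniform-in-$k$ selection of a clustering scale: clustering at any single prescribed scale could either split a genuine concentration for some $k$ or merge distinct concentrations drifting apart to infinity for other $k$. The pigeonhole trick on $\binom{N_0}{2}+1$ geometrically spaced intervals reduces this to a finite number of cases, from which a single gap scale is extracted along a subsequence.
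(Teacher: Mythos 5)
Your proof is correct but takes a genuinely different route from the paper's. The paper defines an equivalence relation directly on the indices $\{1,\dots,N_0\}$ by declaring $i\sim j$ if $\limsup_k|x_{k,i}-x_{k,j}|<+\infty$ (implicitly passing to a subsequence so that each pairwise distance either stays bounded or tends to infinity), takes one representative per class, and obtains $R_0$ from the finitely many bounded limsups. You instead run a pigeonhole argument on $\binom{N_0}{2}+1$ geometrically spaced annular scales to extract, along a subsequence, a single gap scale $M^{\ell^*}$ that separates ``close'' from ``far'' pairs for every $k$, and then cluster at that scale. Your constants check out ($M^{\ell^*+1}>2R_0$ follows from $M>2N_0+20$, and $R_0>(N_0-1)M^{\ell^*}+4$ is clear), the pigeonhole step is sound (distances falling outside $[1,M^{\binom{N_0}{2}+1}]$ simply don't occupy any interval), and passing to a further subsequence fixes the number of clusters. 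The trade-off is worth noting: the paper's limsup relation is intrinsically index-based, so the $i$-th representative tracks the same ``concentration'' as $k$ varies, which feeds directly into the definition of $\X_{k,i}$ and its limit in the subsequent Lemma~\ref{lem: limit local min}; your per-$k$ clustering may pair up different indices for different $k$, so to use it downstream one should also pass to a subsequence along which the induced partition of $\{1,\dots,N_0\}$ is constant (finitely many partitions, so this is harmless). On the other hand your argument is more explicit and yields a concrete bound $R_0\le N_0 M^{\binom{N_0}{2}}+10$, whereas the paper's $R_0$ is purely existential.
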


\begin{proof}
    Let $N_0$ and $\{x_{k,1}, \dots, x_{k,N_0}\}$ be as in Lemma~\ref{lem: density estimates}. 
 For $i, j \in \{1, \dots, N_0\}$, we say $i \sim j$ if $\limsup_k |x_{k,i} - x_{k,j}|<+\infty.$ 
 Observe that $\sim$ defines an equivalence relation on the indices $i$. {Up to a further unrelabeled subsequence, we may assume that for each $k$ there are $N\leq N_0$ many equivalence classes partitioning $\{1,\dots, N_0\}$. Furthermore, up to relabeling of the points $x_{k,i}$, we may assume that for each $k$, $x_{k,1},\dots,x_{k,N}$ are representatives for each of the $N$ distinct equivalence classes.} By definition of the equivalence relation, there must exist $R_0>0$ such that {for large enough $k$,} the balls $B_{R_0}(x_{k,i})$ for $1\leq i \leq N$ are pairwise disjoint and 
 $\cup_{j \in [i]} {B_4}(x_{k,j}) \subset B_{R_0}(x_{k,i})$ for each $1\leq i \leq N$. After restricting to a tail such that this property holds, we conclude that
 \[
 \X_k(1) \subset \bigcup_{i = 1}^N B_{R_0}(x_{k,i})
\]
as desired. 
\end{proof}

For each $i =1,\dots , N$ and $k \in \mathbb{N}$ define the translated cluster
\begin{equation}\label{eq:concentration definition}
    \X_{k,i} := \X_{k} - x_{k,{i}}\,.
\end{equation}
 We will refer to each cluster $\X_{k,i}$ as a {\it concentration}.

 \begin{lemma}\label{lem: limit local min}
     Let $\X_{k,i}$ be as in \eqref{eq:concentration definition}. Then, up to a subsequence, not relabeled, there exist partitions $\X_{\infty,i}= (\X_{\infty,i}(1), \X_{\infty, i}(2), \X_{\infty, i}(3))$ {of $\R^n$} by sets of locally finite perimeter with the following properties:

\smallskip

\noindent (i) setting $v_i:=|\X_{\infty,i}(1)|$,
\begin{align}\label{eq:vi sum to 1}
    \sum_{i=1}^N v_i = 1\,;
 \end{align}

\noindent (ii) for each $h=1,2,3$, $\X_{k,i}(h)$ and $\partial \X_{k,i}(h)$ converge locally in the Hausdorff sense to $\X_{\infty,i}(h)$ {and $\partial \X_{\infty,i}(h)$ respectively};

\noindent (iii) {
If $|\X_{\infty,i}(2)|= |\X_{\infty,i}(3)|=\infty,$} then $\X_{\infty,i}$ is a locally minimizing $(1,2)$-cluster in the sense \eqref{eq:definition of local min}.   
{If $|\X_{\infty,i}(2)|$ or $|\X_{\infty,i}(3)|$ is finite, then 
\begin{align}\label{eq:floater lower bound}
    \frac{1}{2}\sum_{j=1}^3 P(\X_{\infty,i}(j);B_{2R_0}) \geq n\omega_n^{1/n} v_i^{(n-1)/n}\,.
\end{align}
}
\end{lemma}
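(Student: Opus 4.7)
The plan is to combine the uniform local perimeter bound \eqref{eq:first energy bound} with the density estimates of Lemma~\ref{lem: density estimates} and Corollary~\ref{cor: more density estimates}. Applying \eqref{eq:first energy bound} to $\X_{k,i}$ on any fixed ball $B_R$ (eventually contained in $B_{4R_k}(-x_{k,i})$ since $|x_{k,i}|=\Orm(\sqrt{R_k})$) and diagonalizing in $R\uparrow\infty$ produces $L^1_\loc$ subsequential limits $\X_{\infty,i}$ whose chambers are sets of locally finite perimeter partitioning $\R^n$. The two-sided density estimates of Lemma~\ref{lem: density estimates} and Corollary~\ref{cor: more density estimates} apply uniformly in $k$ on any fixed bounded subset of $B_{2R_k}(-x_{k,i})$, which for $k$ large contains $B_R$, so standard arguments upgrade $L^1_\loc$ convergence to local Hausdorff convergence of chambers and topological boundaries; this yields (ii). For (i), distinct-class representatives satisfy $|x_{k,i}-x_{k,j}|\to\infty$, so the balls $B_{R_0+1}(x_{k,i})$ are eventually pairwise disjoint, and together with $\X_k(1)\subset\bigcup_i B_{R_0}(x_{k,i})$ from Lemma~\ref{lemma:identifying concentrations} this forces $\X_{\infty,i}(1)\subset \overline{B_{R_0}}$ and
\begin{align*}
1=|\X_k(1)|=\sum_{i=1}^N|\X_{k,i}(1)\cap B_{R_0+1}|\tolong \sum_{i=1}^N v_i.
\end{align*}

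The main step is (iii) under $|\X_{\infty,i}(2)|=|\X_{\infty,i}(3)|=\infty$. Fix $\rho>R_0$ with $\Hcal^{n-1}(\partial B_\rho\cap\partial^*\X_{\infty,i}(h))=0$, and a competitor $\X'$ with $\X'(h)\Delta \X_{\infty,i}(h)\cc B_\rho$ and $|\X'(h)|=|\X_{\infty,i}(h)|$. Select via Fubini a scale $\rho_k\in(\rho,\rho+1/k)$ so that $\sum_h\Hcal^{n-1}(\partial B_{\rho_k}\cap(\X_{k,i}(h)\Delta \X'(h)))=\Orm_k(1)$, and define the competitor for $\mu_k$ by
\begin{align*}
\X_k'(h) := \bigl((\X'(h)+x_{k,i})\cap B_{\rho_k}(x_{k,i})\bigr) \cup \bigl(\X_k(h)\setminus B_{\rho_k}(x_{k,i})\bigr).
\end{align*}
Since $B_{\rho_k}(x_{k,i})\cc B_{3R_k}$ for $k$ large, the boundary condition on $\partial B_{3R_k}$ is preserved, and the identity $|\X'(1)|=|\X_{\infty,i}(1)|$ together with $L^1_\loc$ convergence gives $|\X_k'(1)|=1+\Orm_k(1)$; a uniform volume-fixing variation performed either at a nucleation point in $B_{R_0}(x_{k,j})$ for some $j\ne i$ (when $N\geq 2$) or at a point of $\partial^*\X'(1)+x_{k,i}$ (when $N=1$) restores $|\X_k'(1)|=1$ at cost $\Orm_k(1)$ in both $\Pcal$ and $\cG_k$, by the same reasoning as in Step~2 of the proof of Lemma~\ref{lem: density estimates}. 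The Lipschitz bound $\Lip(g_k)\leq 1/\sqrt{R_k}$ from \eqref{eq:cons of lip vanishing} implies that $g_k$ oscillates by at most $2\rho_k/\sqrt{R_k}=\Orm_k(1)$ on $B_{\rho_k}(x_{k,i})$, so after matching the chamber-$1$ volume the $\cG_k$-discrepancy between $\X_k'(1)$ and $\X_k(1)$ on this ball is $\Orm_k(1)$. Testing $\Ecal_k(\X_k;B_{4R_k})\leq \Ecal_k(\X_k';B_{4R_k})$ and rearranging then gives
\begin{align*}
    \Pcal(\X_{k,i};B_{\rho_k})\leq \Pcal(\X';B_\rho)+\Orm_k(1),
\end{align*}
and passing $k\to\infty$ via lower semicontinuity of perimeter yields $\Pcal(\X_{\infty,i};B_\rho)\leq \Pcal(\X';B_\rho)$.

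For the floater case, if some $|\X_{\infty,i}(h)|<\infty$ for $h\in\{2,3\}$, then the containment $\X_{\infty,i}(1)\subset\overline{B_{R_0}}\subset B_{2R_0}$ and the decomposition of $\partial^*\X_{\infty,i}(1)$ into its two interfaces with $\X_{\infty,i}(2)$ and $\X_{\infty,i}(3)$ give
\begin{align*}
\tfrac12\sum_{j=1}^3 P(\X_{\infty,i}(j);B_{2R_0})\geq P(\X_{\infty,i}(1);B_{2R_0})=P(\X_{\infty,i}(1))\geq n\omega_n^{1/n}v_i^{(n-1)/n}
\end{align*}
by the isoperimetric inequality. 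The main obstacle is the local-minimality competitor construction: simultaneously achieving the exact volume constraint $|\X_k'(1)|=1$, the set-theoretic boundary datum on $\partial B_{3R_k}$, and a vanishing $\cG_k$-discrepancy. The first is handled by the nucleation structure of Lemma~\ref{lem: density estimates} providing uniform-cost volume-fixing sites, the second by the containment $|x_{k,i}|=\Orm(\sqrt{R_k})\ll R_k$, and the third by the Lipschitz property of $g_k$, which renders the penalization asymptotically constant on any bounded region.
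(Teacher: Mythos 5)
Your proof is correct and follows essentially the same approach as the paper: compactness via the uniform local perimeter bound, Hausdorff convergence from the density estimates of Corollary~\ref{cor: more density estimates}, the partition of unit volume across well-separated balls for~(i), a glue-and-volume-fix competitor argument for local minimality relying on $\Lip(g_k)\to 0$, and the isoperimetric inequality for the floater bound. The paper states the competitor argument as a sketch with a reference to \cite{NovPaoTor23}*{Theorem 2.13}; you have filled in the same construction (Fubini slicing at a good radius, volume restoration via a uniform volume-fixing variation at a nucleation site or at $\partial^*\X'(1)+x_{k,i}$) explicitly, which is a reasonable level of detail and matches what those references do.
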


\begin{proof}The proof is divided into steps.

\medskip

\noindent{\it Step 1}: Here we obtain the limiting clusters $\X_{\infty,i}$ and prove (i) and (ii). By the local perimeter bound \eqref{eq:first energy bound} and the compactness for sets of locally finite perimeter, for each $1\leq i \leq N$ and $h=1,2,3$ we obtain sets of finite perimeter $\X_{\infty,i}(h)$ such that $\X_{k,i}(h)\overset{L^1_\loc}{\to}\X_{\infty,i}(h)$. Furthermore, by Lemma \ref{lemma:identifying concentrations} and the fact that $|\X_k(1)|=1$, it follows that if we set $v_i=|\X_{\infty,i}(1)|$, then
\begin{align}\notag
    \sum_{i=1}^N v_i = 1\,,
\end{align}
which is (i).

To prove the local Hausdorff convergence of the chambers, first note that the inclusion $\X_k(1) \subset B_{{{\rm O}(\sqrt{R_k})}}$ from \eqref{eq:nonsaturation} implies that $\dist(x_{k,i},\partial B_{2R_k})\to \infty$ for every $1\leq i \leq N$. 
Thus, the  density estimates of Corollary~\ref{cor: more density estimates} along the boundaries of the chambers $\X_k(h)$ on $B_{2R_k}$ imply that the chambers of each concentration $\X_{k,i}$ enjoy {upper and lower} density estimates along their boundaries on balls that exhaust $\mathbb{R}^n$ as $k\to \infty$. 
From here, standard arguments (e.g.~ \cite[Proof of Theorem 21.14, Step 4]{Mag12}) give the local Hausdorff convergence. Other than $\partial \X_{k,i}(1)$, however, we do not know that these boundaries intersect $B_{R_0}$ non-trivially.

\medskip

\noindent{\it Step 2.} It remains to show (iii). {First consider the case that $|\X_{\infty,i}(2)|=|\X_{\infty,i}(3)|=\infty$.} The argument {that $\mathcal{P}$ is locally minimized} is standard, so we only sketch the details. 
For $s>0$, consider a local variation $\X_{i}'$ of $\X_{\infty, i}$ with $\X_{i}' \Delta \X_{\infty, i} \cc B_s$ {and $|\X_{i}'(1)| = v_i$}. For $k$ sufficiently large, we know $B_s(x_{k, i}) \subset B_{{\rm O}(\sqrt{R_k})}$ (as in Step 1 of Lemma~\ref{lem: density estimates}). Using this and the local Hausdorff convergence of the interfaces of $ \X_{k, i} $ to $ \X_{\infty, i}$, one can modify $\X_{i}'$ into an admissible competitor $\X''_{k,i}$ for $\X_k$ with $\X_k \Delta \X_{k,i}'' \Subset B_s(x_{k, i})$, analogously to that in the proof of \cite{NovPaoTor23}*{Theorem 2.13} (see also \cite{Mag12}*{Theorem 21.14} and \cite{BroNov24}*{Corollary 4.6}). From here, taking $\X_{k,i}''$ as a competitor for the minimality of $\X_k$ in the problem \eqref{e:localized-min-problem} and passing to the limit establishes the local minimality of $\X_{\infty, i}$.
The key reason the limiting energy for which  $\X_{\infty, i}$ is locally minimizing is $\mathcal{P}$, with no residual contribution coming from the limit of the $\cG_k$, is  the fact that the Lipschitz constant $\Lip(g_{R_k}) \leq 1/\sqrt{R_k}$ tends to zero as $k \to \infty$. {It remains to prove the lower bound \eqref{eq:floater lower bound} when at least one of $\X_{\infty,i}(2)$ or $\X_{\infty,i}(3)$ has finite volume. Since $|\X_{\infty,i}{(1)}|=v_i$, the fact that $\X_{\infty,i}(1) \cc B_{2R_0}$ and isoperimetric inequality yield
\begin{align}\notag
    \frac{1}{2} \sum_{j=1}^3 P(\X_{\infty,i}(j);B_{2R_0}) \geq P(\X_{\infty,i}(1);B_{2R_0}) = P(\X_{\infty,i}(1))\geq n\omega_n^{1/n} (v_i)^{(n-1)/n}\,,
\end{align}
which is \eqref{eq:floater lower bound}.}
\end{proof}

\section{Blowdowns of limit concentrations}\label{sec: blowdown}
Thanks to Lemma \ref{lem: limit local min}(iii), for any sequence of scales $\rho_j \uparrow + \infty$, we may consider a subsequential limit (again locally in Hausdorff distance), not relabeled,
\begin{equation}\label{eq:Ki def}
    K_i := \lim_{j \to \infty} \frac{\X_{\infty,i}(2)\cap B_{\rho_j}}{\rho_j}\,.
\end{equation}
Note that $K_i$ implicitly depends on the sequence $\{\rho_j\}$. 
Observe that if  $|\X_{\infty,i}(2)|$ or $ |\X_{\infty,i}(3)|$ is finite,  then $K_i$ is either the empty set or $\R^n$, and $\partial K_i$ is empty.
\medskip

Let us first demonstrate the validity of the density bound \eqref{e:cone-density-comparison}. For this, we require the following almost-monotonicity for area ratios of the interfaces of our concentration regions.

\begin{lemma}\label{l:monotonicity-concentrations}
    For every $k$ and $i$, there are $\rho_{k,i}\in (R_0,2R_0)$, Radon measures $\sigma_{k}$ with $\sup_k \sigma_k(B_{3R_k})<\infty$ and $\spt\, \sigma_{k}\subset \cup_{j=1}^N\partial B_{\rho_{k,j}}(x_{k,j})$, and unit vector fields $\nu_{M_k}^{\rm co}$ such that the surfaces
$$
M_k := \partial^*\X_k(2) \cap \partial^* \X_k(3) \setminus \cup_{j=1}^N B_{\rho_{k,j}}(x_{k,j})
$$
satisfy
\begin{align*}
   \int_{M_k \cap B_{3R_k}}\Div_{M_k}X \,d\mathcal{H}^{n-1} = \int_{B_{3R_k}} X \cdot \nu_{M_k}^{\rm co}\,d\sigma_k \qquad \forall X \in C_c^\infty(B_{3R_k};\mathbb{R}^n) 
\end{align*}
and the following monotonicity-type property for each $i$:
\smallskip

{\it if $\rho_{k,i}<r_k<s_k<\dist(x_{k,i},\partial B_{3R_k})$, then}
\begin{align}\label{eq:large scale area bounds}
    \frac{\mathcal{H}^{n-1}(M_k \cap B_{r_k}(x_{k,i}))}{r_k^{n-1}} &\leq \frac{\mathcal{H}^{n-1}(M_k \cap B_{s_k}(x_{k,i}))}{s_k^{n-1}} \\ \notag
    &\qquad + C_4(n,N)\sigma_k(B_{3R_k})\bigg[\frac{1}{r_k^{n-2}} + \frac{1}{\min_{j\neq i}|x_{k,i}-x_{k,j}|^{n-2}}\bigg]\,.
\end{align}
\end{lemma}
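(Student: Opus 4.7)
The plan is to deduce both the first variation identity and the almost-monotonicity formula from the almost-minimality \eqref{eq:lambda min cluster} of $\X_k$, exploiting the fact that the penalty $\cG_k$ sees only $\X_k(1)$, which has been isolated inside the excised balls. I divide the argument into three steps: a coarea slicing to choose the radii $\rho_{k,i}$, the extraction of stationarity of $M_k$ from \eqref{eq:lambda min cluster}, and the classical monotonicity derivation with boundary error.

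For the first step, Lemma~\ref{lem: density estimates} gives $\X_k(1) \subset \bigcup_i B_4(x_{k,i})$, so after enlarging $R_0 \geq 4$ (consistently with the disjointness of the $B_{R_0}(x_{k,i})$ from Lemma~\ref{lemma:identifying concentrations}), the annuli $B_{2R_0}(x_{k,i}) \setminus B_{R_0}(x_{k,i})$ are disjoint from $\X_k(1)$. Applying coarea with the distance function from $x_{k,i}$ on the interface, together with the local perimeter bound \eqref{eq:first energy bound}, gives
\begin{align*}
\int_{R_0}^{2R_0} \mathcal{H}^{n-2}\big(\partial^*\X_k(2) \cap \partial^*\X_k(3) \cap \partial B_\rho(x_{k,i})\big)\,d\rho \leq C(n,R_0)\,,
\end{align*}
uniformly in $k$, so the mean value theorem selects $\rho_{k,i} \in (R_0, 2R_0)$ along which the slice is $(n-2)$-rectifiable with uniformly bounded mass. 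Setting $\sigma_k := \mathcal{H}^{n-2} \lfloor \partial M_k$ (a Radon measure supported on $\bigcup_j \partial B_{\rho_{k,j}}(x_{k,j})$) and letting $\nu_{M_k}^{\rm co}$ be the inward-pointing conormal of $M_k$ along $\partial M_k$, the uniform bound $\sup_k \sigma_k(B_{3R_k}) < \infty$ follows immediately from the fact that $N$ is fixed.

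For the first variation identity, any $X \in C_c^\infty(B_{3R_k}; \R^n)$ compactly supported away from $\bigcup_j \overline{B_{\rho_{k,j}}(x_{k,j})}$ generates a flow that pointwise fixes $\X_k(1)$, leaves $\cG_k(\X_k(1))$ unchanged, and does not touch the interfaces $\partial^*\X_k(1) \cap \partial^*\X_k(h)$ for $h=2,3$. Plugging such $X$ into \eqref{eq:lambda min cluster} and using the absence of a lower-order term (since $\X_k(i)\Delta\Phi_t(\X_k)(i)=\varnothing$ for $i=1$) yields $\int_{M_k} \Div_{M_k} X\,d\mathcal{H}^{n-1} = 0$, i.e., stationarity of $M_k$ inside $B_{3R_k} \setminus \bigcup_j \overline{B_{\rho_{k,j}}(x_{k,j})}$. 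Extending to general $X \in C_c^\infty(B_{3R_k}; \R^n)$ by a standard cutoff/integration-by-parts argument---using the rectifiability of $\partial M_k$ from Step~1 and the regularity theory of Remark~\ref{rmk: regularity} near the spheres---produces the boundary term $\int X \cdot \nu_{M_k}^{\rm co}\,d\sigma_k$.

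The almost-monotonicity is then a direct application of the classical radial-field computation. Inserting $X(y) = (y - x_{k,i}) \eta(|y - x_{k,i}|)$ into the first variation identity and using
\begin{align*}
\Div_{M_k} X = (n-1)\eta(|y - x_{k,i}|) + \eta'(|y - x_{k,i}|) \cdot \frac{|(y - x_{k,i})^T|^2}{|y - x_{k,i}|}\,,
\end{align*}
together with coarea on $M_k$, yields in the limit $\eta \to \chi_{[0,r]}$ the differential inequality
\begin{align*}
\frac{d}{dr}\bigg(\frac{\mathcal{H}^{n-1}(M_k \cap B_r(x_{k,i}))}{r^{n-1}}\bigg) \geq -\frac{1}{r^n} \int_{\partial M_k \cap B_r(x_{k,i})} |y - x_{k,i}|\,d\sigma_k(y)\,.
\end{align*}
Integrating from $r_k$ to $s_k$ and decomposing the boundary integrand by sphere---using $|y - x_{k,i}| \leq 2R_0$ on $\partial B_{\rho_{k,i}}(x_{k,i})$, and $|y - x_{k,i}| \leq 2|x_{k,i} - x_{k,j}|$ on $\partial B_{\rho_{k,j}}(x_{k,j})$ for $j \neq i$ (using the disjointness of the $B_{R_0}$-balls from Lemma~\ref{lemma:identifying concentrations})---gives the stated error after bounding $1/r_k^{n-1} \leq 1/r_k^{n-2}$ (valid since $r_k > R_0 \geq 1$). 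I expect the main technical subtlety to lie in Step~2, namely verifying cleanly that no residual contribution from the penalty $\cG_k$ or from the $\X_k(1)$-interfaces enters the first variation identity; this is exactly what the containment $\X_k(1) \subset \bigcup_i B_4(x_{k,i})$ from Lemma~\ref{lem: density estimates}, combined with the choice $R_0 \geq 4$, is designed to provide.
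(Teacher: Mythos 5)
Your overall plan tracks the paper's argument closely: slice to choose good radii $\rho_{k,i}$, deduce that $M_k$ has bounded first variation concentrated on the spheres $\partial B_{\rho_{k,j}}(x_{k,j})$, then apply a monotonicity formula with an error term controlled by the boundary measure. Steps 1 and 3 are essentially the paper's approach (the paper slices via the a.e.\ differentiability of the increasing function $\rho\mapsto\Hcal^{n-1}(M_k\cap B_\rho(x_{k,i}))$ rather than the coarea inequality, and invokes Lemma~\ref{lemma:monotonicity} rather than re-deriving it, but these are cosmetic differences).

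The genuine gap is in Step~2. You assert that plugging the flow of a vector field $X$, compactly supported in $B_{3R_k}\setminus\bigcup_j\overline{B_{\rho_{k,j}}(x_{k,j})}$, into the $(\Lambda,r_0)$-minimality \eqref{eq:lambda min cluster} yields $\int_{M_k}\Div_{M_k}X\,d\Hcal^{n-1}=0$ because the ``lower-order term'' vanishes, on the grounds that $\X_k(1)\Delta\Phi_t(\X_k)(1)=\varnothing$. But the almost-minimality error term in \eqref{eq:lambda min cluster} is $\Lambda\sum_{i}|\X_k(i)\Delta\X'(i)|$, summed over \emph{all} $i=1,2,3$. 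Even though the flow fixes $\X_k(1)$, it still moves the $\partial^*\X_k(2)\cap\partial^*\X_k(3)$ interface, so $|\X_k(h)\Delta\Phi_t(\X_k)(h)|\sim ct$ for $h=2,3$, and the resulting conclusion from \eqref{eq:lambda min cluster} is only $|\delta V_k(X)|\lesssim\Lambda\,\|X\|$, i.e.\ \emph{bounded} distributional mean curvature, not stationarity. This matters quantitatively: with merely bounded mean curvature, $\delta V_k$ would be supported on all of $M_k$, and the error in the monotonicity formula would be of size $\sim\Lambda(s_k-r_k)$ (or worse, exponential), which blows up along the blowdown scales $s_k\uparrow\infty$ and destroys the density comparison used in Corollary~\ref{c:blowdown-density-bound}. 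It is essential that $\spt\sigma_k$ be confined to the spheres and that $\sigma_k(B_{3R_k})$ be bounded uniformly in $k$.

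The correct argument for stationarity away from the spheres bypasses \eqref{eq:lambda min cluster} entirely and uses the \emph{direct} minimality of $\X_k$ for the problem $\mu_k$ defined in \eqref{e:localized-min-problem}: a flow $\Phi_t$ generated by $X$ as above leaves $\X_k(1)$ pointwise fixed, hence preserves both the volume constraint $|\X_k(1)|=1$ and the penalty $\cG_k(\X_k(1))$, so $\Phi_t(\X_k)$ is admissible for $\mu_k$ and $\Ecal_k(\Phi_t(\X_k);B_{4R_k})-\Ecal_k(\X_k;B_{4R_k})=\Pcal(\Phi_t(\X_k);B_{4R_k})-\Pcal(\X_k;B_{4R_k})\geq 0$. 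Since this holds for all $t$, the derivative at $t=0$ vanishes: $\delta V_k(X)=0$. This is the ``vanishing distributional mean curvature on $B_{3R_k}\setminus\bigcup_i\partial B_{\rho_{k,i}}(x_{k,i})$'' that the paper invokes. With that fix, the rest of your argument goes through, though you should note that the lower bound $|y-x_{k,i}|\gtrsim|x_{k,i}-x_{k,j}|$ (not the upper bound you quote) is what is needed when $|x_{k,i}-x_{k,j}|\gg r_k$ for the $j\neq i$ contribution, and that the paper sidesteps the need to prove $(n-2)$-rectifiability of $\partial M_k$ by defining $\sigma_k=|\delta V_k|$ directly from Riesz representation rather than as $\Hcal^{n-2}\res\partial M_k$.
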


\begin{proof}
To identify $\rho_{k,i}$, we first use the fact that $\rho\mapsto \mathcal{H}^{n-1}(M_k \cap B_\rho(x_{k,i}))=:g_{i}(\rho)$ is increasing (and thus differentiable for a.e.~ $\rho$) and bounded from above by $C_1(n)R_0^{n-1}$ for $\rho \in (R_0,2R_0)$ to identify radii $\rho_{k,i}\in (R_0,2R_0)$ such that $g_{i}'(\rho_{k,i})$ exists and $g_{i}'(\rho_{k,i}) \leq C_2(n,R_0)$. Then by a cut-off argument (see e.g. \cite[Eq.~ 4.17]{BroNov24}) based on the existence of $g_{i}'(\rho_{k,i})$ and the fact that $M_k$ has vanishing (distributional) mean curvature on $B_{3R_k}\setminus \cup_{i} \partial B_{\rho_{k,i}}(x_{k,i})$, it follows that the rectifiable $(n-1)$-varifolds $V_k:=\underline{v}(M_k,1)$ satisfy 
\begin{align}\label{eq:first var of Mk}
    \delta V_k (X) = \int_{M_k} \Div_{M_k}X\,d\mathcal{H}^{n-1} \leq C_3(n,N,R_0)\|X\|_{L^\infty}\qquad \forall X\in C_c^\infty(B_{3R_k};\mathbb{R}^n)\,.
\end{align}
Thus by Riesz's theorem, $\delta V_k$ is a vector-valued Radon measure which has polar decomposition $\nu_{V_k}^{\rm co}(\cdot) \sigma_k$, where $\sigma_k = |\delta V_k|$ and $\nu_{V_k}^{\rm co}$ is $\Sbb^{n-1}$-valued. Furthermore, by \eqref{eq:first var of Mk} and the fact that $M_k$ has vanishing mean curvature on $B_{3R_k}\setminus \cup_{i}\partial B_{\rho_{k,i}}(x_{k,i})$, we have
\begin{align}\label{eq:sigmak bounds}
    \sigma_k(B_{3R_k})\leq C_3(n,N,R_0)\qquad \mbox{and}\qquad \spt\, \sigma_k \subset \bigcup_{i=1}^N\partial B_{\rho_{k,i}}(x_{k,i})\,.
\end{align}
Then, fixing $i$, by applying Lemma \ref{lemma:monotonicity} at radii $\rho_{k,i}<r_k<s_k$ and using \eqref{eq:sigmak bounds}, we have
\begin{align}\notag
    \frac{ \mu_{V_k}(B_{r_k}(x_{k,i}))}{r_k^{n-1}} &\leq \frac{\mu_{V_k}(B_{s_k}(x_{k,i}))}{s_k^{n-1}} +  \int_{B_{s_k}(x_{k,i})}\bigg|\frac{1}{\max\{r_k,|y-x_{k,i}|\}^{n-1}}-\frac{1}{s_k^{n-1}}\bigg|\frac{|y-x_{k,i}|}{n-1}\,d\sigma_k \\ \notag
    &= \frac{\mu_{V_k}(B_{s_k}(x_{k,i}))}{s_k^{n-1}} + \int_{\partial B_{\rho_{k,i}}(x_{k,i})}\bigg|\frac{1}{r_k^{n-1}}-\frac{1}{s_k^{n-1}}\bigg|\frac{|y-x_{k,i}|}{n-1}\,d\sigma_k \\ \notag
    &\,+ \sum_{j\neq i}\int_{B_{s_k}(x_{k,j}) \cap \partial B_{\rho_{k,j}}(x_{k,j})}\bigg|\frac{1}{\max\{r_k,|y-x_{k,i}|\}^{n-1}}-\frac{1}{s_k^{n-1}}\bigg|\frac{|y-x_{k,i}|}{n-1}\,d\sigma_k \\ \notag
    &\leq  \frac{\mu_{V_k}(B_{s_k}(x_{k,i}))}{s_k^{n-1}} + C_4(n,N)\sigma_k(B_{3R_k})\bigg[\frac{1}{r_k^{n-2}} + \frac{1}{\min_{j\neq i}|x_{k,i}-x_{k,j}|^{n-2}}\bigg].
\end{align}

Since $V_k$ is multiplicity one, $\mu_{V_k} = \Hcal^{n-1}\mres M_k$, and thus we conclude.
\end{proof}

As an immediate consequence of Lemma \ref{l:monotonicity-concentrations}, we obtain the following.
\begin{corollary}\label{c:blowdown-density-bound}
    For any $i=1,\dots, N$, the cone $\partial K_i$ from \eqref{eq:Ki def} satisfies
    \[
        \Hcal^{n-1}(\partial^* K_i \cap B_1) \leq \Hcal^{n-1}(\partial^* K\cap B_1)\,.
    \]
   In particular, the conclusion \eqref{e:cone-density-comparison} of Theorem \ref{t:higher-dim} holds for $\X = \X_{\infty,i}$ with any choice of $i$.
\end{corollary}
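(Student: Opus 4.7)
The plan is to use the almost-monotonicity of area ratios in Lemma \ref{l:monotonicity-concentrations} to propagate a density bound from the outer boundary condition at scale $3R_k$ down to an order-one scale centered at $x_{k,i}$, and then further down to the blowdown scale via lower semicontinuity of perimeter. First, we may reduce to the case $|\X_{\infty,i}(2)|=|\X_{\infty,i}(3)|=\infty$, since otherwise $K_i$ agrees in measure with either $\varnothing$ or $\R^n$, in which case $\partial^* K_i \cap B_1 = \varnothing$ and the estimate is trivial. Henceforth $\X_{\infty,i}$ is a locally minimizing $(1,2)$-cluster by Lemma \ref{lem: limit local min}(iii).

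Fix $r > 2R_0$, which ensures $r > \rho_{k,i}$, and apply Lemma \ref{l:monotonicity-concentrations} with $r_k = r$ and $s_k := 3R_k - |x_{k,i}| - 1$; the latter is admissible because $s_k < \dist(x_{k,i},\partial B_{3R_k})$ for $k$ large. By the construction of the equivalence classes in Lemma \ref{lemma:identifying concentrations}, for any $j \neq i$ we have $|x_{k,i}-x_{k,j}| \to \infty$, so the error term involving $\min_{j \neq i}|x_{k,i}-x_{k,j}|^{n-2}$ vanishes in $k$. Combining with Lemma \ref{l:monotonicity-concentrations} yields
\[
\frac{\Hcal^{n-1}(M_k \cap B_r(x_{k,i}))}{r^{n-1}} \leq \frac{\Hcal^{n-1}(M_k \cap B_{s_k}(x_{k,i}))}{s_k^{n-1}} + \frac{C}{r^{n-2}} + \mathrm{o}_k(1).
\]

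For the first term on the right, the inclusion $B_{s_k}(x_{k,i}) \subset B_{3R_k}$ combined with \eqref{eq:good bound on 3R} gives $\Hcal^{n-1}(M_k \cap B_{s_k}(x_{k,i})) \leq P(\X_k(2); B_{3R_k}) \leq P(B) + P(K; B_{3R_k})$. Since $\partial K$ is a cone centered at the origin, $P(K; B_{3R_k}) = (3R_k)^{n-1}\Hcal^{n-1}(\partial K \cap B_1)$, while $|x_{k,i}| = \Orm(\sqrt{R_k})$ by Lemma \ref{lem: density estimates} gives $s_k/(3R_k) \to 1$. Hence the first term on the right converges to $\Hcal^{n-1}(\partial K \cap B_1)$ as $k \to \infty$. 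By lower semicontinuity of perimeter under the local convergence $\X_{k,i}\to\X_{\infty,i}$ from Lemma \ref{lem: limit local min}(ii), together with the observation that $M_k \cap B_r(x_{k,i})$ differs from $\partial^*\X_k(2) \cap \partial^*\X_k(3) \cap B_r(x_{k,i})$ only inside $B_{\rho_{k,i}}(x_{k,i}) \subset B_{2R_0}$ where the total interfacial measure is uniformly bounded, we obtain
\[
\frac{\Hcal^{n-1}(\partial^*\X_{\infty,i}(2) \cap \partial^*\X_{\infty,i}(3) \cap B_r)}{r^{n-1}} \leq \Hcal^{n-1}(\partial K \cap B_1) + \frac{C}{r^{n-2}} + \frac{C}{r^{n-1}}\,.
\]

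Sending $r \to \infty$ removes the remaining error terms. To conclude, pass this asymptotic density bound to the blowdown: by lower semicontinuity of perimeter applied to the rescalings $\X_{\infty,i}(2)/\rho_j \to K_i$,
\[
\Hcal^{n-1}(\partial^* K_i \cap B_1) \leq \liminf_{\rho_j \to \infty} \frac{\Hcal^{n-1}(\partial^*\X_{\infty,i}(2) \cap B_{\rho_j})}{\rho_j^{n-1}}\,,
\]
and since $\X_{\infty,i}(1) \Subset B_{R_0}$ has finite perimeter, the contribution from $\partial^*\X_{\infty,i}(2) \cap \partial^*\X_{\infty,i}(1)$ is bounded independently of $\rho_j$ and vanishes after division by $\rho_j^{n-1}$. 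This yields the desired estimate.

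The main technical point to handle carefully is the bookkeeping between the ``punctured'' interface $M_k$ and the full interface $\partial^*\X_k(2) \cap \partial^*\X_k(3)$, together with the choice of $s_k$ approaching the boundary of $B_{3R_k}$: both require using that the center $x_{k,i}$ lies at scale $\Orm(\sqrt{R_k}) \ll R_k$ and that the finite-perimeter chambers $\X_k(1)$ and $\X_{\infty,i}(1)$ contribute only bounded perimeter on each scale under consideration.
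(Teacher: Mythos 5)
Your proof is correct and follows essentially the same route as the paper's: apply the almost-monotonicity of Lemma \ref{l:monotonicity-concentrations} with $s_k$ just inside $B_{3R_k}$, use \eqref{eq:good bound on 3R} and $|x_{k,i}|=\mathrm{O}(\sqrt{R_k})$ to show the outer density ratio tends to $\Hcal^{n-1}(\partial K \cap B_1)$, pass to the limit in $k$ at fixed $r$, then send $r=\rho_j\to\infty$. The only cosmetic difference is that you carry the full interface (with an extra $C/r^{n-1}$ term from $B_{2R_0}$) rather than working with the punctured $M_{\infty,i}$ throughout, and you explicitly dispose of the trivial-blowdown case; both are minor bookkeeping variants of the same argument.
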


\begin{proof}
By Lemma \ref{l:monotonicity-concentrations} and by construction of the points $x_{k,i}$, for any pair of scales $2R_0<r_k<s_k<\dist(x_{k,i},\partial B_{3R_k})$, the concentration $\X_{k,i}$ with associated
\[
    M_{k,i} := \partial^*\X_{k,i}(2) \cap \partial^* \X_{k,i}(3) \setminus \cup_{j=1}^N B_{\rho_{k,j}}(x_{k,j}-x_{k,i})
\]
satisfies
\begin{align*}
    \frac{\mathcal{H}^{n-1}({M_{k,i}} \cap B_{r_k})}{r_k^{n-1}} \leq \frac{\mathcal{H}^{n-1}({M_{k,i}} \cap B_{s_k})}{s_k^{n-1}} + C_5 \bigg[\frac{1}{r_k^{n-2}} + \frac{1}{\min_{j\neq i}|x_{k,i}-x_{k,j}|^{n-2}}\bigg]\,,
\end{align*}
where $C_5 = C_4\sup_k \sigma_{k}(B_{3R_k})$. Since $|x_{k,i}|\leq {\rm O}(\sqrt{R_k})$ according to Lemma \ref{lem: density estimates}, this estimate holds for $2R_0<r_k < s_k \leq 3R_k - {\rm O}(\sqrt{R_k})$.
By choosing $s_k = 3R_k - {\rm O}(\sqrt{R_k})$ and estimating the right hand side from above using energy bound \eqref{eq:good bound on 3R}, we obtain
\begin{align*}
    \frac{\mathcal{H}^{n-1}(M_{k,i} \cap B_{r_k})}{r_k^{n-1}} \leq \frac{P(K;B_{3R_k})+P(B)}{(3R_k-{\rm O}(\sqrt{R_k}))^{n-1}} + C_5 \bigg[\frac{1}{r_k^{n-2}} + \frac{1}{\min_{j\neq i}|x_{k,i}-x_{k,j}|^{n-2}}\bigg]\,,
\end{align*}
Fix any $r>2R_0$. {Letting $r_k=r$ for all $k$ and} taking $k \uparrow +\infty$, we therefore arrive at
\[
    \frac{\mathcal{H}^{n-1}(M_{\infty,i} \cap B_{r})}{r^{n-1}} \leq P( K ; B_1)  + \frac{C_5}{r^{n-2}}\,,
\]
where $M_{\infty, i} = \partial^* \X_{\infty,i}(2) \cap \partial^* \X_{\infty,i}(3) \setminus  B_{\rho_{\infty,i}}$, where $\rho_{\infty,i}$ is a subsequential limit of the sequence $\{\rho_{k,i}\} \subset (R_0,2R_0)$. Now take $r = \rho_j \uparrow +\infty$, namely, the sequence of scales generating the cone $\partial K_i$. Since $M_{\infty,i}$ agrees with $\partial^* \X_{\infty,i}(2) \cap \partial^* \X_{\infty,i}(3)$ outside of the fixed ball $B_{\rho_{\infty,i}}$, the convergence of $\X_{\infty,i}(2)$ to $K_i$ yields the desired conclusion.
\end{proof}
\section{Proof of Theorem \ref{t:higher-dim}}

We are now in a position to complete the proof of Theorem \ref{t:higher-dim}. In light of Corollary \ref{c:blowdown-density-bound} and Lemma \ref{lem: limit local min}, it remains to verify that for at least one of the limiting concentrations $\X_{\infty,i}$, every blowdown $K_i$ has an interface that is a singular cone.
In this proof, it will be useful to introduce terminology for the case when either $|\X_{\infty,i}(2)|$ or $|\X_{\infty,i}(3)|$ is finite; in this case we call $\X_{\infty, i}$ a \emph{floater}.

\medskip

\textit{Case 1:} There exists $i$ and a sequence of scales $\rho_j \uparrow +\infty$ such that $\partial K_i$ is singular, namely it is nonempty and not equal to an $(n-1)$-dimensional plane.

In this case, 
as observed at the beginning of Section~\ref{sec: blowdown}, we must have $|\X_{\infty,i}(2)|= |\X_{\infty,i}(3)| = \infty$. So, Lemma \ref{lem: limit local min}(iii) guarantees that the corresponding limiting concentration $\X_{\infty,i}$ is a locally $\Pcal$-minimizing $(1,2)$-cluster subject to the volume constraint $|\X_{\infty,i}(1)|=v_i\in (0,1]$. By the monotonicity formula, the density of $\X_{\infty,i}$ at infinity, given by
\[
    \Theta_\infty(\X_{\infty,i}) = \lim_{r\to\infty} \frac{\Hcal(\partial^*\X_{\infty,i}(2)\cap B_r)}{r^{n-1}}\,,
\]
exists, and since $\partial K_i$ is singular, $\Theta_\infty(\X_{\infty,i}) > (1 + \eta)\omega_{n-1}$, where $\eta$ is the threshold determined by Allard's Regularity Theorem \cite{Allard_72}*{Section 8}. In particular, since $\Theta_\infty(\X_{\infty,i})$ is independent of the sequence of scales going to infinity, it is not possible for $\X_{\infty,i}$ to have a blowdown along some sequence $r_j \uparrow +\infty$ whose interface is contained in a plane, otherwise we would have
\[
    \lim_{j\to\infty} \frac{\Hcal(\partial^*\X_{\infty,i}(2)\cap B_{r_j})}{r_j^{n-1}} = \omega_{n-1}
\]
along such a sequence. We therefore conclude that in this case, \emph{any} blowdown of $\X_{\infty,i}$ has an interface that is a singular cone. In this case the proof of Theorem~\ref{t:higher-dim} is complete by letting $\X= \X_{\infty, i}$.

\medskip

\textit{Case 2:} For each $i=1,\dots, N $, either $\X_{i,\infty}$ is a floater, or for  every sequence of scales $\rho_j \uparrow + \infty$, the interface $\partial K_i$ of the blowdown of $\X_{\infty,i}$ along $\{\rho_j\}$ is an $(n-1)$-dimensional plane. Note that the latter case means that $\X_{\infty, i}$ has planar growth at infinity and thus
 \cite{BroNov24}*{Theorem 2.9} implies that $\X_{\infty,i}$ is a (rescaled) standard lens cluster $\X_{\lens}^{v_i}$ centered at a point $y_i$ with $|\X_{\lens}^{v_i}(1)| = v_i$ and $|y_i|\leq R_0.$ Up to re-indexing, we may assume that for some $M\in \{0,\dots, N\}$,  $\X_{\infty,1}\,\dots \X_{\infty, M}$  are standard lens clusters and $\X_{\infty,M+1},\dots , \X_{\infty, N}$ are floaters, with the convention that $M=0$ means there are no standard lens clusters.

We will rule out Case 2 in two steps.

\noindent {\it Step 1:} We claim that $M=N=1$, namely, there is just one concentration, and that it is a standard lens cluster.
We will prove this using a concavity argument together with the minimality of $\partial K$ to show that if not, the energy $\mathcal{E}_k(\X_k;B_{R_k})$ must be too large, violating \eqref{eqn: energy bound from lens competitor}. 
First, {observe that 
\begin{equation}\label{eqn: Lambda floater}
    \Lambda_{\rm plane}(n) < n \omega_n^{1/n}.
\end{equation}
The inequality follows from the fact that the standard lens cluster is 
the only local minimizer with planar growth by \cite[Theorem 2.9]{BroNov24}.
}

Next, observe that by scaling we have that for a lens $\X_{\lens}^{v}$ centered at a point $y \in \R^n$,
\begin{equation}
    \label{eqn: scaling lens energy}
\Pcal(\X_{\lens}^{v};B_{3R_0}(y)) - \omega_{n-1} (3R_0)^{n-1} = \Lambda_{\plane}(n) v^{\frac{n-1}{n}}\,,
\end{equation}
for any $v>0$ small enough such that $\X_{\lens}^v(1) \subset B_{R_0}.$

Now, if $\X_{\infty, i}$ is a floater, set $y_i=0$, while if $\X_{\infty, i}$ is a volume-$v_i$ standard lens cluster, let $y_i$ be its center as above. Let
\[
    A_k := \bigcup_{i=1}^N B_{3R_0}(x_{k,i}+y_i)\,
\]
and observe that $B_{3R_0}(x_{k,i}+y_i) \supset B_{R_0}(x_{k,i}).$

In light of the $L^1_\loc$ convergence of the $\X_{k, i}$ to $\X_{\infty,i}$, as well Lemma~\ref{lem: limit local min}(iii), \eqref{eqn: Lambda floater}, \eqref{eqn: scaling lens energy}, and the strict concavity of $t \mapsto t^{\frac{n-1}{n}}$, we have 
\begin{equation*}
    \begin{split}
    \lim_{k\to\infty}& \Pcal(\X_k;A_k) - M \omega_{n-1} (3R_0)^{n-1}\\
    & \geq
    \sum_{i=1}^M \left(\Pcal(\X_{\lens}^{v_i};B_{3R_0}(y_i)) - \omega_{n-1} (3R_0)^{n-1}\right)
    +\sum_{i=M+1}^N \bigg(\frac{1}{2} \sum_{j=1}^3 P(\X_{\infty,i}(j); B_{3R_0})\bigg) \\
    & \geq  \Lambda_{\plane}(n) \sum_{i=1}^M v_i^{\frac{n-1}{n}} 
    + n\omega_n^{1/n} \sum_{i=M+1}^N v_i^{\frac{n-1}{n}}
     \\
    & \geq \Lambda_{\plane}(n) \sum_{i}v_i^{\frac{n-1}{n}} \geq 
 \Lambda_{\plane}(n) \Big(\sum_{i}v_i \Big)^{\frac{n-1}{n}}= \Lambda_{\plane}(n)\,,        
    \end{split}
\end{equation*}
On the right-hand side of the first line, $\X_{\lens}^{v_i}$ is the lens centered at $y_i.$ If $M<N$, i.e. if there are floaters, the first inequality in the final line is strict in light of \eqref{eqn: Lambda floater}, while if $N>1,$ the second inequality in the final line is strict.
 Thus, for the full energy of $\X_k$, we have
 \begin{align}
 \begin{split}
      \label{eqn: lb for part of energy}
     \mathcal{E}_k(\X_k;B_{4R_k})&\geq  \Pcal(\X_k;B_{4R_k}) 
     = \Pcal(\X_k;A_k) + \Pcal(\X_k; B_{4R_k}\setminus A_k)  \\
     & \geq  \Lambda_{\plane}(n) + \Pcal(\X_k; B_{4R_k}\setminus A_k) + M \omega_{n-1}(3R_0)^{n-1} +\mathrm{o}_k(1). 
     \end{split}
 \end{align}
Notice that 
\[
\Pcal(\X_k; B_{4R_k}\setminus A_k) = \mathcal{H}^{n-1}(\partial^*\X_k(2) \cap \partial^*\X_k(3) \cap ( B_{4R_k}\setminus A_k)).
\]
Next, for each $i=1,\dots , M$ we know that $\X_{\infty,i}$ is a standard lens cluster {of volume $v_i$} centered at $y_i\in \R^n$ with $|y_i|\leq R_0$.
So, for each such $i$, $\X_{k,i}$ converges locally in the Hausdorff distance to its corresponding lens, and in particular, in the annulus $B_{4R_0}(x_{k,i}+y_i) \setminus B_{3R_0}(x_{k,i}+y_i)$, the interface $\partial^*\X_k(2) \cap \partial^*\X_k(3)$ is arbitrarily close to a plane in the Hausdorff distance.
By extending $\X_k(2)$ and $\X_k(3)$ inside each $B_{3R_0}(x_{k,i}+y_i)$ appropriately, for instance by gluing in the relevant plane in $B_{cR_0}(x_{k,i}+y_i)$ and paying along the boundary of a cylinder of radius $cR_0$ centered at $x_{k,i}+y_i$ for a suitable choice of $c\in [1,3/2]$, we see that there are complementary sets $\X'_k(2)$ and $\X'_k(3)$ that agree with $\X_k(2)$ and $\X_k(3)$ respectively on $B_{4R_k} \setminus A_k$ 
and which have
\begin{equation}\label{eqn: fill in}
\mathcal{H}^{n-1}(\partial^*\X_k'(2) \cap \partial^*\X_k'(3) \cap B_{4R_k}) = \Pcal(\X_k; B_{4R_k}\setminus A_k) +M \omega_{n-1}(3R_0)^{n-1} + \mathrm{o}_k(1).
\end{equation}
Here we are also using the fact that for $k$ sufficiently large the $\partial^*\X_k(2)\cap \partial^*\X_k(3)$ interface does not intersect $B_{3R_0}(x_{k,i})$ for $i=M+1,\dots , N$, using the Hausdorff convergence of Lemma~\ref{lem: limit local min}(ii) and the definition of floater.
On the other hand, using the area-minimizing property of the cone $\partial K$, we have 
\begin{equation}
    \label{eqn: using cone min}
\mathcal{H}^{n-1}(\partial^*\X_k'(2) \cap \partial^*\X_k'(3) \cap B_{4R_k}) \geq P(K ; B_{4R_k}).
\end{equation}
Combining \eqref{eqn: fill in} and \eqref{eqn: using cone min}  with \eqref{eqn: lb for part of energy}, we find 
\[
 \mathcal{E}_k(\X_k;B_{4R_k}) \geq  \Lambda_{\plane}(n) + P(K; B_{4R_k}) +  \mathrm{o}_k(1),
\]
with strict inequality unless $N=M=1$. Combining this with the energy upper bound \eqref{eqn: energy bound from lens competitor} for $k$ sufficiently large, we conclude by contradiction that $M=N=1.$

\medskip

\label{subsec:comp for lensshaped concentration}
\noindent{\it Step 2:}
Now that we have a single concentration $\X_{\infty,1}$, we are in a position to exploit the fact that $\Lambda(\partial K) < \Lambda_{\plane}(n)$ to use the fact that $\X_{\infty,1}$ is a standard lens cluster to contradict the minimality of $\X_k$ with an argument similar to the one in Step 1 above. Recall that we write $y_1$ to denote the center of this standard lens cluster and that $B_{3R_0}(y_1) \supset B_{R_0}.$

Since $\Lambda(\partial K) < \Lambda_{\plane}(n)$, there exists a $(1,2)$-cluster $\bar\X$ with $|\bar\X(1)|=1$, $\bar\X(1)$ bounded, and $\bar\X(2)= K\setminus \bar{\X}(1)$, and $\bar\X(3)= K^c\setminus \bar{\X}(1)$ such that
\begin{equation}\label{eq:lawson beats plane cluster}
P(\bar\X(1)) - \Hcal^{n-1}({\partial K}\cap \bar\X(1)^{(1)}){+ \alpha} \leq  \Lambda_{\plane}(n)
\end{equation}
for some $\alpha>0$. We therefore have for large $k$
\begin{align*}
    \Pcal(\X_k;B_{4R_k}) &= \Pcal(\X_k; B_{3R_0}(x_{k,1} + y_1)) - \omega_{n-1} (3R_0)^{n-1} + \Pcal(\X_k; B_{4R_k}\setminus B_{3R_0}(x_{k,1}+ y_1)) + \omega_{n-1} (3R_0)^{n-1} \\
    &= \Lambda_{\plane}(n) + \mathrm{o}_k(1) + \Pcal(\X_k; B_{4R_k}\setminus B_{3R_0}(x_{k,1} +y_1)) + \omega_{n-1} (3R_0)^{n-1} \\
    &\geq \alpha+  \big(P(\bar\X(1)) - \Hcal^{n-1}({\partial K}\cap \bar\X(1)^{(1)})\big) + \Pcal(K; B_{4R_k}) + \mathrm{o}_k(1) \\
    &=\alpha +\Pcal(\bar\X;B_{4R_k}) +\mathrm{o}_k(1)\,.
\end{align*}
In the final inequality we have used \eqref{eq:lawson beats plane cluster} as well as a repetition of the same gluing argument giving us \eqref{eqn: fill in} above and the perimeter minimality of the cone $K$ as in \eqref{eqn: using cone min}.
However, since $\bar\X$ is an admissible competitor {for $\mu_k$ and $\Pcal(\bar \X;B_{4R_k})=\mathcal{E}_k(\bar\X_k;B_{4R_k})$} for large $k$, for $k$ sufficiently large we reach a contradiction to the minimality of $\X_k$ as desired.

\section{Proof of Theorem~\ref{t:Simons-vs-lens}}\label{s:Lambda-Lawson-vs-Lambda-plane}
In this section we prove Theorem~\ref{t:Simons-vs-lens}. {This relies on an explicit computation of the values of $\Lambda_{\plane}(n)$, together with an upper bound on $\Lambda_{\Lawson}(n)$ by $\Lambda(C_{n/2-1,n/2-1})$ when $n$ is even, and by $\Lambda(C_{(n-1)/2 -1,(n-1)/2})$ when $n$ is odd. The computation requires one to evaluate integral expressions, which may simplified using special functions, but are still difficult to evaluate explicitly by hand in general. We therefore use {the FLINT library to do our interval arithmetic. The reader who does not have this installed can use Mathematica to check the computations for small $n$. For the reader who wishes to try pen and paper calculations, we also describe how to do the computations entirely by hand in all even dimensions, and {carry them out explicitly for $n=8$}.}

\subsection{Computing $\Lambda_{\plane}(n)$}
Recall the quantity $ \Lambda_{\plane}(n)$
defined in \eqref{eqn: Lambda plane def}. Our first step is to derive the following exact expression for $\Lambda_{\rm plane}(n)$ in terms of special functions:
\begin{align}\label{eq:final lambda plane expression}
    \Lambda_{\plane}(n) 
    & = \omega_{n-1}^{\frac{1}{n}} \, \frac{ (n-1)
\Big(\frac{\sqrt{\pi}\, \Gamma\left(\frac{n-1}{2}\right)}{\Gamma\left(\frac{n}{2}\right)} - {}_2F_1\left(\frac{1}{2}, \, \frac{3-{n}}{2} ; \,  \frac{3}{2} ; \,  \frac{1}{4}\right)\Big) - \Big(\frac{\sqrt{3}}{2}\Big)^{n-1}}{\Big(\frac{\sqrt{\pi}\,  \Gamma \left(\frac{n+1}{2}\right)}{\Gamma\left(1+\frac{{n}}{2}\right)}-{}_2F_1\left(\frac{1}{2},\, \frac{1-{n}}{2};  \, \frac{3}{2};\, \frac{1}{4}\right) \Big)^{\frac{n-1}{n}}}\,.
\end{align}
Here ${ }_2 F_1(a,b;c;z)$ is the Gauss hypergeometric function. 

Let $A_1$ be a spherical cap of unit radius in the upper halfspace $\{x_n\geq 0 \}$ meeting the hyperplane $\{x_n=0\}$ with interior contact angle $\pi/3$ and let $A_2$ be its reflection across  $\{x_n=0\}$. Let $V_{\lens}(n)$ denote the volume of the region enclosed by $A_1 \cup A_2$. 
By the coarea formula, 
\begin{align}
\label{eqn: comp perim lens}
    \mathcal{H}^{n-1}(A_1) &= (n-1)\omega_{n-1} \int_{1/2}^1 \big(1-t^2\big)^{\frac{n-3}{2}}\,dt\\
  \label{eqn: comp v lens}  V_{\lens}(n) &= 2\omega_{n-1} \int_{1/2}^1 \big(1-t^2\big)^{\frac{n-1}{2}} \,dt 
\end{align}
Note that when $n\geq 3$ is odd, the integrands in both expressions above are polynomials and thus can easily be integrated by hand. However, since we will also consider even dimensions, we proceed in generality.
Since the intersection of $A_1$ and $A_2$ is an $(n-2)$-dimensional sphere in $\{x_n=0\}$ of radius $\frac{\sqrt{3}}{2}$, 
\begin{align}\label{eqn: C lens intermed}
\Lambda_{\plane}(n) &= \frac{2\mathcal{H}^{n-1}(A_1) - \omega_{n-1} \left(\frac{\sqrt{3}}{2}\right)^{n-1}}{V_{\lens}(n)^{\frac{n-1}{n}}}
\end{align}%
To write this expression more explicitly, we first compute the antiderivative of the integrand in the integral expression \eqref{eqn: comp v lens} for $V_{\lens}(n).$ 
The change of variables $s = t^2$ gives us 
\begin{align*}  
\int_{0}^x \left(1-t^2\right)^{\frac{n-1}{2}} \,dt & =\frac{1}{2} \int_0^{x^2} \left(1-s\right)^{\frac{n-1}{2}} s^{-\frac{1}{2}} \,ds = \frac{1}{2}\, {\rm B}\left(x^2;\, \mfrac{1}{2} , \,\mfrac{n+1}{2}\right)
\end{align*}
where ${\rm B}(z; a, b) = \int_0^z s^{a-1} (1-s)^{b-1}\,ds$ is the incomplete beta function; see e.g. \cite[\S 8.17(i)]{NIST:DLMF}. Therefore, keeping in mind that ${\rm B}(1;a,b)$ is the (complete) beta function satisfying the identify ${\rm B}(1; a,b) = \frac{\Gamma(a)\Gamma(b)}{\Gamma(a+b)}$ and that $\Gamma(1/2) = \sqrt{\pi}$, we evaluate the integral in the expression \eqref{eqn: comp v lens} for $V_{\lens}(n)$ as
\[
\int_{1/2}^1 (1-t^2)^{(n-1)/2} \,dt  
= \frac{1}{2}{\rm B}\left(x^2; \mfrac{1}{2} , \mfrac{n+1}{2}\right)\Big|_{1/2}^1
= \frac{1}{2}\frac{\sqrt{\pi}\,\Gamma(\frac{n+1}{2})}{\Gamma(\frac{n+2}{2})} - \frac{1}{2}{\rm B}\left(\mfrac{1}{4}; \mfrac{1}{2} , \mfrac{n+1}{2}\right)\,.
\]
To further simplify the final term, we apply the identity  
\[
{\rm B}(z; a, b)=\frac{z^a}{a}\, { }_2 F_1\left(a, 1-b ; a+1 ; z\right) ;
\]
see \cite[\S 8.17(ii)]{NIST:DLMF}.  This leads us to the final expression
\[
V_{\lens}(n) =
\omega_{n-1}\bigg(\frac{\sqrt{\pi}\, \Gamma \left(\frac{n+1}{2}\right)}{\Gamma\left(1+\frac{{n}}{2}\right)}-{}_2F_1\left(\mfrac{1}{2}, \, \mfrac{1-n}{2}; \, \mfrac{3}{2}; \,  \mfrac{1}{4}\right)\bigg)
\]

Analogously evaluating the integral in the expression \eqref{eqn: comp perim lens} for $\mathcal{H}^{n-1}(A_1)$ shows 
\[
\mathcal{H}^{n-1}(A_1) = 
\frac{(n-1)\omega_{n-1} }{2}\bigg(\frac{\sqrt{\pi}\, \Gamma\left(\frac{n-1}{2}\right)}{\Gamma\left(\frac{n}{2}\right)} - {}_2F_1\left(\mfrac{1}{2},\, \mfrac{3-{n}}{2} ; \, \mfrac{3}{2} ;  \,\mfrac{1}{4}\right)\bigg)
\]
where again ${}_2F_1$ is the Gauss hypergeometric function. Substituting these expressions for $V_{\lens}(n)$ and $\mathcal{H}^{n-1}(A_1)$ into \eqref{eqn: C lens intermed} yields \eqref{eq:final lambda plane expression}.

\subsection{Estimating $\Lambda_{\Lawson}(n)$}
\label{ss:Simons}
Next, we estimate $\Lambda_{\Lawson}(n)$ from above via an explicit competitor construction. 
Choose two positive integers $k,l$ such that $n= k+l+2.$ 
Below we construct a set of finite perimeter $E$  in $\R^n$ with finite volume depending on $k$ and $l$.
Letting 
\[
    K = \{x_1^2 + \cdots + x_{k+1}^2 < \tfrac{k}{l}(x_{k+2}^2 + \cdots + x_n^2)\}
\]
denote the region bounded by the Lawson cone $C_{k,l}$, we then let $\X$ be the $(1,2)$-cluster with $\X(1) = E /|E|^{1/n}$ (so that it has the same volume as $\X_{\rm lens}(1)$), $\X(2) = K \setminus \X(1)$ and $\X(3) = (\R^n\setminus K)\setminus \X(1)$. 
For this cluster, we set
\begin{equation}
    \label{eqn: def M k l}
M(k,l) = P(\X(1)) - \mathcal{H}^{n-1}(C\cap \X(1))
\end{equation}
Then, keeping in mind the definition of  $\Lambda_{\Lawson}(n)$ in \eqref{eqn: Lambda Lawson def} and  by taking $\X$ as a competitor in the variational problem \eqref{eqn: Lambda C def} for $C= C_{k,l}$, we have 
\begin{equation}
    \label{eqn: Lambda Lawson leq M}
\Lambda_{\Lawson}(n) \leq M(k,l)\,.
\end{equation}

We define the set $E$ be invariant under the action of $SO(k+1)\times SO(l+1)$ on $\R^n$. Thus,  it suffices to define its ``slice" $\hat{E}$ in a two-dimensional quadrant $Q =\{ (u,v) : u>0, v>0\}$, where $u=|(x_1,\dots,x_{k+1})|$ and $v= |(x_{k+2},\dots,x_n)|$.  Letting $\lambda:= \sqrt{{k}/{l}}$, notice that 
\[
    \partial K \cap Q = \{(u,v): u>0\,, \ v>0\,, \ u = \lambda v\}\,.
\]

 In the quadrant, define $\hat E \cap \{u< \lambda v\}$
 to be the circular arc of radius
 $r>0$ and center $(0, -h)$ that forms angle $\pi/3$ with the line $\{u=\lambda v\}$ and angle $\pi/2$ with the $v$-axis $\{u=0\}$.
 We may parameterize this arc as the graph {over the $v$-axis} of the function $f: [1,-h+r] \to [0,\lambda]$ given by $f(v) = \sqrt{r^2-(v+h)^2}$. Let $\theta = \arctan(\lambda)$ denote the angle that the line $\{u=\lambda v\}$ makes with the $v$-axis $\{u=0\}$. 
Since $f(1) = \lambda = \sqrt{r^2 -(1+h)^2}$ and $f'(1)= -\tan(2\pi/3-\theta) = -\frac{1+h}{\lambda}$, this yields
\begin{equation}
    \label{eqn: r and h}
    h= \lambda \tan(2\pi/3-\theta) -1, \qquad 
    r = \lambda \sec(2\pi/3-\theta)\,.
\end{equation}

We define $\hat E \cap \{u> \lambda v\}$ analogously. Namely, we parameterize the arc of a circle of radius $\rho>0$ and center {$(-d,0)$} that forms angle $\pi/3$ with the line $\{u=\lambda v\}$ and angle $\pi/2$ with the $u$-axis $\{v=0\}$, via a function $g:[\lambda,\rho-d] \to [0,1]$ {defined by $g(u) = \sqrt{\rho^2 - (u+d)^2}$}. This time, we have $g(\lambda) = 1 = \sqrt{\rho^2-(\lambda+d)^2}$ and $g'(\lambda) = -\tan(\pi/6 +\theta)= -(\lambda+d)$, which in turn yields
\begin{equation}
    \label{eqn: rho and d}
    d= \tan(\pi/6 +\theta) - \lambda\,, \qquad 
    \rho = \sec(\pi/6 +\theta)\,.
\end{equation} 
See Figure \ref{uv-plot} below for a diagram of the part of $\hat E$, together with the cone $C_{k,l}$, lying in the upper-right quadrant of the $(u,v)$-plane.

\vspace{10pt}

\begin{figure}[hbtp]
\centering\includegraphics[height=200pt]{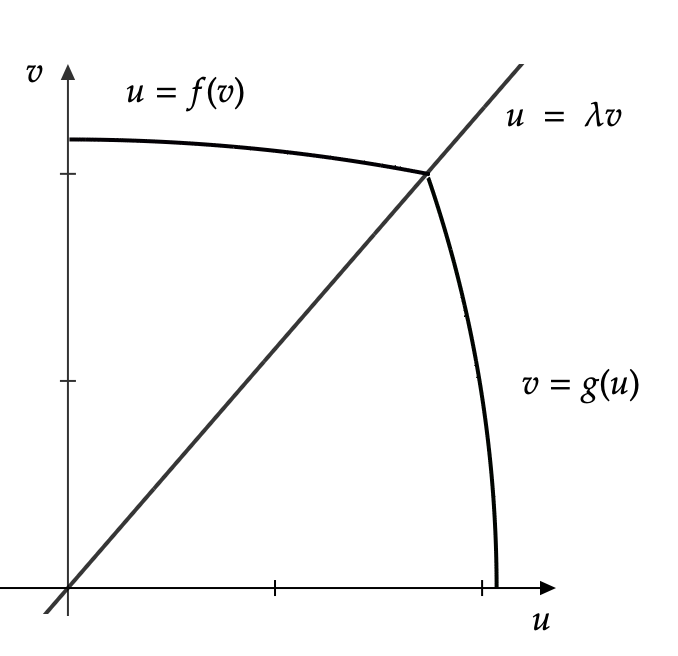}
\caption{\small A depiction of the competitor $\hat E$ in the upper-right quadrant of the $(u,v)$-plane.}
\label{uv-plot}
\end{figure}

The coarea formula (see e.g. \cite{Davini04}) yields
\begin{align}
\label{eqn: lawson comp perim 1}    \Hcal^{n-1}(\partial E) &= (k+1)(l+1) \omega_{k+1}\omega_{l+1}\int_{\partial \hat E} u^k v^l \, d\Hcal^1 \\
\label{eqn: lawson comp vol 1}    |E| &= (k+1)(l+1) \omega_{k+1}\omega_{l+1} \int_{\hat E} u^k v^l \, du\, dv\,.
\end{align}
Since, by the same formula,
\[
    P(K; E) = \lambda^k \sqrt{1+\lambda^2} (k+1)(l+1) \omega_{k+1}\omega_{l+1} \int_0^1 v^{k+l}\, dv = \frac{(k+1)(l+1) \omega_{k+1}\omega_{l+1}\lambda^k \sqrt{1+\lambda^2} }{k+l+1}\,,
\]
the quantity $M(k,l)$ defined in \eqref{eqn: def M k l} is given by
\begin{equation}\label{eqn: Lambda Lawson bounds}
    M(k,l) = \frac{1}{|E|^{\frac{n-1}{n}}} \bigg(\Hcal^{n-1}(\partial E)-\frac{(k+1)(l+1) \omega_{k+1}\omega_{l+1}\lambda^k \sqrt{1+\lambda^2} }{k+l+1} \bigg)\,.
\end{equation}

The right-hand side of \eqref{eqn: Lambda Lawson bounds} is made more explicit   by substituting $|E|$ and  $\Hcal^{n-1}(\partial E)$ for the following expressions, which follow from plugging definition of $\hat{E}$ into \eqref{eqn: lawson comp perim 1} and \eqref{eqn: lawson comp vol 1}:
\begin{align}
 \nonumber   |E|  = \omega_{k+1} \omega_{l+1} \Big( \lambda^{k+1} + (k+1) &\int_\lambda^{\rho -d} u^k \big(\rho^2 - (u+d)^2 \big)^{\frac{l+1}{2}} \,du\\
    &
\label{eqn: Lawson comp Flint vol}
+ (l+1)  \int_1^{r-h} v^l \big(r^2 - (v+h)^2\big)^{\frac{k+1}{2}} \,dv   \Big),\\
\nonumber    \mathcal{H}^{n-1}(\partial E)  = \omega_{k+1} \omega_{l+1} (k+1)(l+1)\Big( \rho & \int_{\lambda}^{\rho-d} u^k \big(\rho^2 -(u+d)^2 \big)^{\frac{l-1}{2}} \,du\\
\label{eqn: Lawson comp Flint perim}   
& + r  \int_{1}^{r-h} v^l \big(r^2 -(v+h)^2\big)^{\frac{k-1}{2}} \,dv  \Big)\,.
\end{align}

In fact, {by analogous reasoning to that for $\Lambda_{\plane}$ in the preceding section, recalling in addition the recursion formula $(k+1)\Gamma(k+1) = \Gamma(k+2)$} both of these expressions (and thus $M(k,l)$) can be expressed explicitly in terms special functions as follows:
\begin{align}\label{eqn: lawson competitor volume}
|E| &=  \omega_{k+1}\omega_{l+1}\left( {\lambda^{k+1}} + \big(\rho^2 - d^2\big)^{\frac{l+1}{2}} \, A +{(r^2-h^2)}^{\frac{k+1}{2}}B \right),\\
\label{eqn: lawson competitor perimeter}
\mathcal{H}^{n-1}(\partial E)  & = \omega_{k+1}\omega_{l+1} \left( (l+1)\rho \big( \rho^2 - d^2 \big)^{\frac{l-1}{2}} C + (k+1)r \big(r^2-h^2 \big)^{\frac{k-1}{2}}  D\right)
\end{align}
where 
\begin{align}
\begin{split}
\label{eqn: term A} A& =  
    \frac{ (\rho -d)^{k+1} \, \Gamma(k+2)\, \Gamma(\frac{l+3}{2})}{\Gamma(k+1 + \frac{l+3}{2})} \,  {}_2F_1\Big(-\mfrac{l+1}{2},\, k+1, \,\mfrac{l+1}{2} + k +2 ; \, -\left( \mfrac{\rho - d}{\rho +d}\right) \Big)\\
     &\qquad \qquad \qquad \qquad \qquad -{\lambda^{k+1}}\,F_1 \Big(k+1 ,\,  -\mfrac{l+1}{2},\,  -\mfrac{l+1}{2},\,  k+2; \, \mfrac{\lambda}{\rho- d} ,\,  -\mfrac{\lambda}{\rho +d}\Big) ,
   \end{split} 
\end{align}
\begin{align}
\begin{split}
  \label{eqn: term B}   B& = 
    \frac{ (r-h)^{l+1} \, \Gamma(l+2)\, \Gamma (\frac{k+3}{2})}{\Gamma(l+1 +  \frac{k+3}{2})} \, {}_2F_1 \Big( -\mfrac{k+1}{2},\, l+1,\, \mfrac{k+1}{2} + l +2 ; \, - \left(\mfrac{r-h}{r+h}\right)\Big)\\
  & \qquad \qquad \qquad \qquad \qquad \qquad
    - F_1\Big(l+1,\,  -\mfrac{k+1}{2}, \, -\mfrac{k+1}{2},\,  l+2; \, \mfrac{1}{r-h}, \, -\mfrac{1}{r+h}\Big)\,,
\end{split}
\end{align}
\begin{align}
\begin{split}
    \label{eqn: term C} 
        C & = \frac{(\rho-d)^{k+1} \Gamma(k+2)\Gamma(\frac{l+1}{2})}{\Gamma(k+1+ \frac{l+1}{2})} \, {}_2F_1\Big(\mfrac{1-l}{2} , k+1; \mfrac{l-1}{2} + k+2;\, -\left( \mfrac{\rho -d}{\rho+ d}\right) \Big)\\
    & \qquad \qquad\qquad \qquad \qquad - \lambda^{k+1}F_1\Big(k+1, \, \mfrac{1-l}{2}, \, \mfrac{1-l}{2}, \, k+2 ; \,\mfrac{\lambda}{\rho- d}, \, - \mfrac{\lambda}{\rho+ d} \Big)\,,
\end{split}
\end{align}
\begin{align}
\begin{split}
\label{eqn: term D} 
    D & =  \frac{(r-h)^{l+1} \Gamma(l+2)\, \Gamma(\frac{k+1}{2})}{\Gamma(l+1+\frac{k+1}{2})} \, {}_2F_1\Big(\mfrac{1-k}{2}, l+1; \mfrac{k-1}{2} + l+2; -\left(\mfrac{r-h}{r+h}\right) \Big)\\
    &
 \qquad \qquad \qquad\qquad\qquad \qquad- F_1\Big(l+1 , \mfrac{1-k}{2},  \mfrac{1-k}{2}; l+2; \mfrac{1}{r-h} ; -\mfrac{1}{r+h}\Big).
\end{split}
\end{align}
Here, ${}_2F_1$ once again denotes the Gauss hypergeometric function, and $F_1$ is the first Appell function, {and we are exploiting their respective integral representations}.
The proofs of the identities \eqref{eqn: lawson competitor volume} and \eqref{eqn: lawson competitor perimeter} are in the same spirit as the proof of the expression \eqref{eq:final lambda plane expression} for $\Lambda_{\rm plane}(n)$ and are postponed to Appendix~\ref{app: special functions}.


\subsection{Computer-assisted proof to estimate $\Lambda_{\plane}(n)$ and $M(k,l)$.}\label{ssec: computerassisted}
In the previous two subsections, we derived an exact expression \eqref{eq:final lambda plane expression} for $\Lambda_{\rm plane}(n)$ in terms of special functions, and an exact expression for $M(k,l)$, which bounds $\Lambda_{\rm Lawson}(n)$ above,  in terms of explicit integrals  by substituting \eqref{eqn: Lawson comp Flint vol} and \eqref{eqn: Lawson comp Flint perim} in \eqref{eqn: Lambda Lawson bounds}. Using the Arb library for arbitrary-precision ball arithmetic \cite{Joh17} within the FLINT mathematical computing library \cite{flint}, we obtain numerical approximations for both of these quantities with rigorous error estimates. As a consequence of these estimates we obtain the following:
\begin{proposition}\label{prop: computer}
    Let $n \in \mathbb{N}$, and for any $k,l \in \mathbb{N}$ with $k+l+2=n,$ let $M(k,l)$ be as in \eqref{eqn: def M k l}. Then the following holds.
    \begin{itemize}
        \item Suppose $n \in \{4, 2700\}$  is an even integer and let $k=\frac{n}{2}-1$. Then $M(k,k) < \Lambda_{\rm plane}(n)$. If $n \geq 8$ then additionally $M(k-1, k+1) < \Lambda_{\rm plane}(n).$
        \item Suppose $n \in \{5, 2700\}$ is an odd integer and let $k = (n-1)/2$. Then $M(k-1,k) < \Lambda_{\rm plane}(n)$.
    \end{itemize}
\end{proposition}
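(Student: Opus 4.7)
The plan is to treat this as a purely numerical verification executed in rigorous (ball) interval arithmetic, using the explicit closed-form expressions derived in the previous two subsections. Specifically, for each admissible $n$ I would compute a rigorous enclosure $[\underline{\Lambda}(n),\overline{\Lambda}(n)]$ for $\Lambda_{\plane}(n)$ via \eqref{eq:final lambda plane expression}, and rigorous enclosures $[\underline{M}(k,l),\overline{M}(k,l)]$ for the relevant $M(k,l)$ via \eqref{eqn: Lambda Lawson bounds} combined with either the special-function expressions \eqref{eqn: lawson competitor volume}--\eqref{eqn: term D} or the one-dimensional integral expressions \eqref{eqn: Lawson comp Flint vol}--\eqref{eqn: Lawson comp Flint perim}. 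The inequality $M(k,l) < \Lambda_{\plane}(n)$ is then certified by checking the strict inequality $\overline{M}(k,l) < \underline{\Lambda}(n)$ on the intervals. Repeating this for $n = 4,5,\dots, 2700$ with the prescribed choices of $(k,l)$ finishes the proof.

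To carry this out concretely, I would use Arb/FLINT, which provides built-in ball implementations of $\Gamma$ and of the Gauss hypergeometric function ${}_2F_1$, together with rigorous Gauss--Legendre quadrature for the integrals in \eqref{eqn: Lawson comp Flint vol}--\eqref{eqn: Lawson comp Flint perim}. The Appell $F_1$ values appearing in \eqref{eqn: term A}--\eqref{eqn: term D} are also available in Arb, but for the sake of robustness, I would cross-check the special-function evaluation of $M(k,l)$ against a direct rigorous quadrature of the integral form, and only report the inequality as verified when both agree. The constants $r,h,\rho,d$ depend only on $\lambda = \sqrt{k/l}$ via \eqref{eqn: r and h}, \eqref{eqn: rho and d}, and these trigonometric quantities are also evaluated in ball arithmetic, so the arithmetic combining them inherits guaranteed error bounds.

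The working precision is chosen adaptively: for each $n$ I would start at, say, $64$ bits, and double the precision until the enclosures $[\underline{M}(k,l),\overline{M}(k,l)]$ and $[\underline{\Lambda}(n),\overline{\Lambda}(n)]$ are disjoint and satisfy $\overline{M}(k,l) < \underline{\Lambda}(n)$. Since the inequality in the conclusion is strict and the gap is bounded below by a dimension-dependent quantity that tends to a positive limit as $n \to \infty$ (by the asymptotics of the underlying geometric construction), this adaptive loop is guaranteed to terminate. The choices of $(k,l)$ in the statement, namely $(n/2-1,n/2-1)$ and $(n/2-2,n/2)$ in the even case and $((n-3)/2,(n-1)/2)$ in the odd case, are exactly those for which the Lawson cone $C_{k,l}$ is area-minimizing for every $n$ in the stated range, so the competitor construction of Section~\ref{ss:Simons} is admissible in \eqref{eqn: Lambda Lawson def}.

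The main obstacle is precision control rather than mathematical depth: for large $n$ both $\Lambda_{\plane}(n)$ and $M(k,l)$ involve ratios whose numerators and denominators are sums of large quantities with considerable cancellation (in particular, the subtraction of $(k+1)(l+1)\omega_{k+1}\omega_{l+1}\lambda^k\sqrt{1+\lambda^2}/(k+l+1)$ in \eqref{eqn: Lambda Lawson bounds}, and the subtraction of $(\sqrt{3}/2)^{n-1}$ and of the two hypergeometric terms in \eqref{eq:final lambda plane expression}). Catastrophic cancellation could a priori swamp the inequality at high precision if handled naively; the remedy is to raise the working precision until each subtraction is resolved to more bits than the predicted gap, which Arb's ball arithmetic makes straightforward to monitor. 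The FLINT error estimates quoted in \cite{Joh17} apply unchanged to these compositions, so the output certifies the inequality rigorously for every $n$ in the stated range and thereby yields the proposition.
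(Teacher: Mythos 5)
Your proposal matches the paper's approach: the paper proves Proposition~\ref{prop: computer} by evaluating the closed-form expression \eqref{eq:final lambda plane expression} for $\Lambda_{\plane}(n)$ and the integral formulas \eqref{eqn: Lawson comp Flint vol}--\eqref{eqn: Lawson comp Flint perim} for $M(k,l)$ using Arb/FLINT ball arithmetic, then certifying $M(k,l)<\Lambda_{\plane}(n)$ via disjointness of the resulting rigorous enclosures. The supplementary C code is the proof, exactly as you propose.

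Two of your side remarks, however, are incorrect, though neither invalidates the finite-range verification. You assert that the gap $\Lambda_{\plane}(n)-M(k,l)$ ``tends to a positive limit as $n\to\infty$'' and that this guarantees termination of the adaptive-precision loop; in fact the paper observes (Figure~\ref{plot}) that this gap is \emph{decreasing} in $n$, makes no claim about its limit, and stops at $n=2700$ precisely because the computation loses resolution there. The loop terminates for the mundane reason that the range is finite and each individual gap is strictly positive. You also claim the chosen $C_{k,l}$ are area-minimizing ``for every $n$ in the stated range''; this fails for $n\in\{4,5,6,7\}$, where $k+l<6$. That is harmless here, since $M(k,l)$ is a number defined by \eqref{eqn: def M k l} and the bound $\Lambda_{\Lawson}(n)\le M(k,l)$ in \eqref{eqn: Lambda Lawson leq M} requires no minimality of $C_{k,l}$ — but the geometric applications (Theorems \ref{t:higher-dim} and \ref{t:Simons-vs-lens}) only invoke the proposition for $n\ge 8$, where the cones are indeed minimizing, and that restriction is essential.
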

The source code (in C) used to prove Proposition~\ref{prop: computer} is included in the supplementary documents.

We now have:
\begin{proof}[Proof of Theorem~\ref{t:Simons-vs-lens}]
  Fix an integer $n \in \{4, 2700\}$. For any $l,k \in \mathbb{N}$ with $k+l+2 = n$, let $M(k,l)$ be as in \eqref{eqn: def M k l}. Since $\Lambda_{\Lawson}(n) \leq M(k,l)$ by \eqref{eqn: Lambda Lawson leq M}, Proposition~\ref{prop: computer} shows that $\Lambda_{\rm Lawson}(n) < \Lambda_{\rm plane}(n)$.
\end{proof}

A few comments are in order regarding Proposition~\ref{prop: computer}. First, we state Proposition~\ref{prop: computer} up to dimension $2700$ because of computational limitations. More specifically, between $2700$ and $2800$, our FLINT code is no longer able to estimate $\Lambda_{\rm plane}(n)$ with sufficient accuracy to draw any conclusions. 
We did not attempt to optimize the code or otherwise extend these numerics to higher dimensions. 

It might be reasonable to conjecture that the inequality  $\Lambda_{\rm Lawson}(n)< \Lambda_{\rm plane}(n)$ in all dimensions. It is not clear whether one should expect the specific competitor constructed in section~\ref{ss:Simons} to have $M(k,l) < \Lambda_{\rm plane }(n)$ in every dimension; the numerics show that $\Lambda_{\rm plane }(n) - M(k,l)$ is decreasing in $n$ (up to $n=2700$) for the choices of $k,l$ in Proposition~\ref{prop: computer}; see Figure~\ref{plot} below.

    We have numerically computed $\Lambda_{\rm plane}(n)$ for $n$ between $2$ and approximately $2750$, and it is increasing in $n$ within this range. Proving this fact could be interesting and possibly useful for constructing local minimizers with singular blowdowns in all dimensions $8$ and above.
 
Numerical approximations of $\Lambda_{\rm plane}(n)$ and $M(k,l)$ can be carried out in Mathematica as well, {if one expresses the integrals in terms of their respective special function representations}. We have chosen to work with FLINT because it uses interval arithmetic and thus provides rigorous error estimates. We have also carried out numerical approximations in Mathematica for the Simons cone case $k=l$ for even dimensions. While these numerics match those given by our FLINT code up to $n=36$, they become inaccurate in higher dimensions.

In the statement of Proposition~\ref{prop: computer}, we include the fact that both $M(k,k)$ and $M(k-1, k+1)$ are strictly less than $\Lambda_{\rm plane}(n)$ for even dimensions at least $8$  because this is of interest in the next section. We will show that in even dimensions, $M(k,k)$ (if $k$ is odd) or $M(k-1,k+1)$ (if $k$ is even) can in principle be computed by hand. Thus, at least in principle, one does not need a computer-assisted proof to establish Proposition~\ref{prop: computer} for even dimensions.

Below, in Table \ref{table:approx values}, we include a table showing numerical approximations of $\Lambda_{\rm plane}(n)$ and $M(k,l)$ for {the first few} values of $n \geq 8$. While our FLINT code approximates these numbers to over 20 digits, here we only include the expansion to 8 decimal places since for these values of $n$ it is sufficient to deduce the conclusion of Proposition~\ref{prop: computer}.

\begin{figure}[hbtp]
\centering\includegraphics[height=200pt]{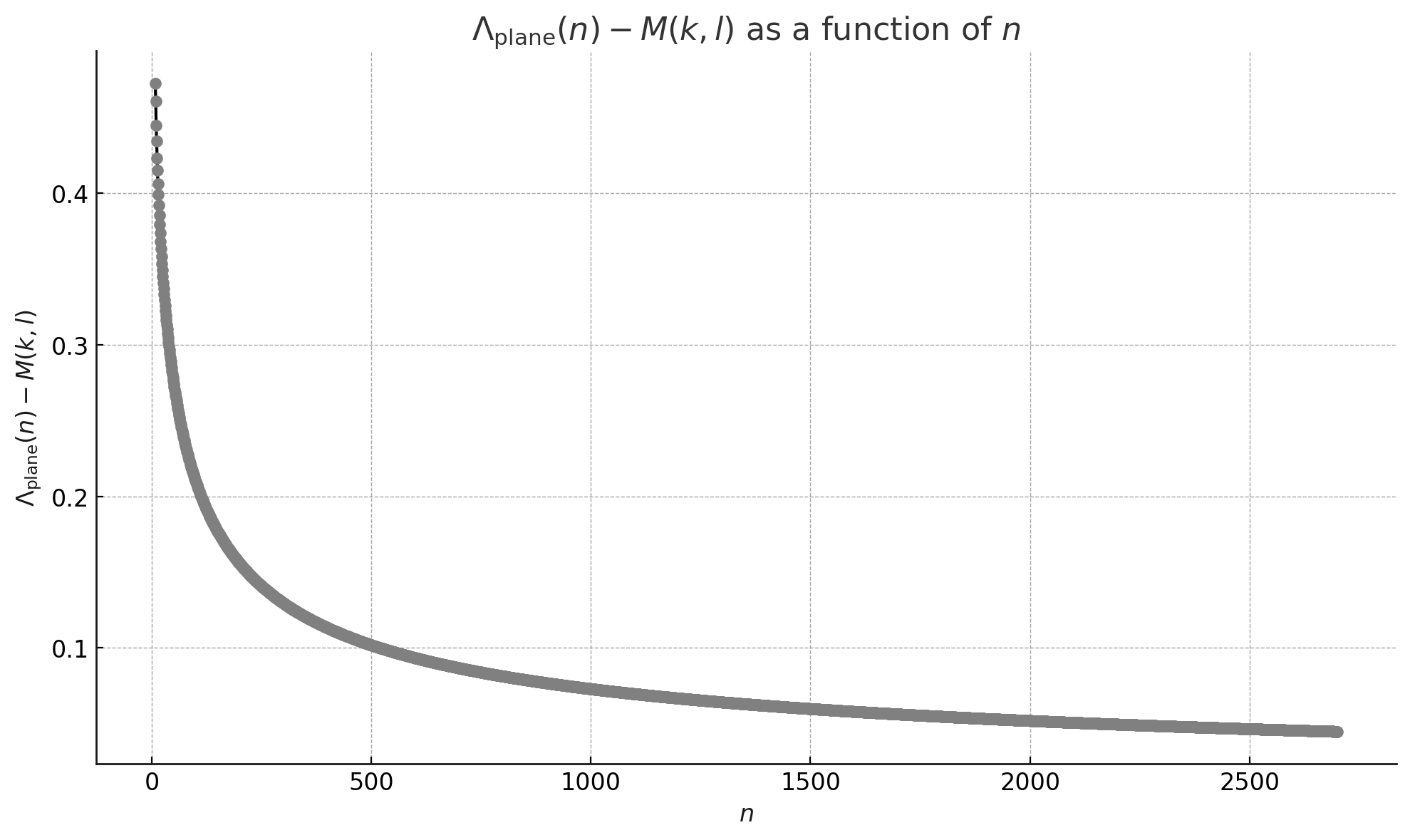}
\caption{\small A plot of $\Lambda_{\rm plane}(n) -M(k,l)$ as a function of $n$. Here $k=l=n/2-1$ when $n$ is even and $k=(n-3)/2, l=k+1$ when $n$ is odd.}
\label{plot}
\end{figure}

\begin{table}
\centering
\setlength{\arrayrulewidth}{0.3mm}
\setlength{\tabcolsep}{22pt}
\renewcommand{\arraystretch}{1.5}
\renewcommand{\arraystretch}{2.5}
\begin{center}
\begin{tabular}{||c c  c c c ||} 
 \hline
$n$ & $k$ & $l$ &$\approx \Lambda_{plane}(n)$  & $\approx M(k,l)$  \\ [0.5ex] 
 \hline
 8 & 3  & 3 & 7.29128238 & 6.81857964 \\ 
 \hline
 9 & 3 & 4 & 7.93735360& 7.47627954 \\ 
  \hline
10  & 4  & 4 & 8.55000228 & 8.10521276  \\ 
10  & 3 & 5 & --- &  8.09827171  \\
\hline
 11 & 4 &5 & 9.13366648 & 8.69902383 \\ 
\hline
12 & 5  & 5 &  9.69190314 & 9.26851974\\ 
 \hline
13 & 5 & 6 &  10.22761175 & 9.81252149 \\ 
 \hline
14 & 6  & 6 & 10.74319067 &  10.33685856 \\ 
14 & 5  & 7 & --- &  10.33488774 \\ 
 \hline
15 & 6 & 7 & 11.24064916 & 10.84138264 \\ 
  \hline
16 & 7 & 7 & 11.72168941 & 11.32970357\\ 
   [1ex] 
 \hline
\end{tabular}
\caption{Approximate values of $\Lambda_{\rm plane}(n)$ and $M_{k,l}$ for small $n$.}
\label{table:approx values}
\end{center}
\end{table}


\medskip

\subsection{By-hand computations in even dimensions and proof of Theorem \ref{t:Simons-vs-lens} for $n=8$}\label{ssec: by-hand}
While the computer-assisted proof of Proposition~\ref{prop: computer} includes rigorous error estimates, it is still interesting to understand the extent to which the proof of Proposition~\ref{prop: computer} and the associated computations can be carried out by hand. In this section, we demonstrate how this can be done for any even dimension $n=2k+2$. More specifically, we show that in this case it is possible to express $M(k,k)$ (if $k$ is odd) or $M(k-1,k+1)$ (if $k$ is even) entirely in terms of rational powers of integers and $\pi$. Moreover, in any dimension, it is possible to express $\Lambda_{\rm plane}(n)$ as a finite sum of products and quotients of polynomials, square roots, and inverse trigonometric functions evaluated at $z=1/4$. We carry out these details explicitly in $n=8$. From here, verifying Proposition~\ref{prop: computer} is a matter of estimating these numbers, {which again we carry out in $n=8$.}

\subsubsection{Further simplifying $\Lambda_{\rm plane}(n)$} To accompany our numerical approximation of $\Lambda_{\rm plane}(n)$, we {obtain here a closed form expression for $\Lambda_{\rm plane}(n)$.}  In this section, $n$ can be even or odd.

We first explain how the functions 
\begin{align}\label{eq:two hypers}
z\mapsto {}_2F_1\left(\mfrac{1}{2}, \, \mfrac{3-{n}}{2}; \, \mfrac{3}{2};\, z\right) \qquad\mbox{and}\qquad z\mapsto {}_2F_1\left(\mfrac{1}{2},\, \mfrac{1-{n}}{2}; \, \mfrac{3}{2};\, z\right) \qquad n\geq 8  
\end{align}
can be expressed as a finite sum of products and quotients of polynomials, square roots, and inverse trigonometric functions (and as observed above, when $n\geq 3$ is odd, it is a polynomial).
First, one may check by term by term series comparison that
\begin{align}\label{eq:special hyper}
 {}_2F_1\left(\mfrac{1}{2},\, \mfrac{1}{2};\,  \mfrac{3}{2};\, z\right) = \frac{\arcsin(\sqrt{z})}{\sqrt{z}}\qquad 0<z<1\,;
\end{align}
this formula is known in the literature, e.g.~ \cite[Equation 2.1.5]{andrewsaskeyroy}. 
{
Next, we recall the formula
\begin{align}\label{eq:gauss contiguous relation}
(c-b)\, {}_2F_1(a,b-1;c;  z) = z(1-z)\frac{d[{}_2F_1(a, b;c;z)]}{dz} - (b-c+az)\, {}_2F_1(a,b;c;z)\,,
\end{align}
which is one of Gauss's ``contiguous relations" \cite[Equation 2.5.9]{andrewsaskeyroy}. Equation \eqref{eq:gauss contiguous relation} implies that
\begin{align}\label{eq:gauss contiguous relation 2}
{}_2F_1\left(\mfrac{1}{2},\, b-1;\mfrac{3}{2};\,  z\right) = \frac{1}{\frac{3}{2}-b}\left[z(1-z)\frac{d[{}_2F_1(\mfrac{1}{2},\, b;\, \mfrac{3}{2}; \, z)]}{dz} - \big(b-\mfrac{3}{2}+\mfrac{z}{2}\big)\, {}_2F_1\left(\mfrac{1}{2},\, b;\, \mfrac{3}{2};\, z\right)\right],
\end{align}
allowing us to start with \eqref{eq:special hyper} and iteratively compute ${}_2F_1(1/2,m/2;3/2;z)$ for any $1\geq m\in \mathbb{Z}$. We thus obtain closed form expressions for each of the functions in \eqref{eq:two hypers}. We then evaluate these functions at $z=1/4$ as they appear in \eqref{eq:final lambda plane expression}.

} 

Additionally, $\Gamma(m/2)$ can be computed by hand for positive integers $m$ by using the property $\Gamma(x+1) = x\Gamma(x)$ together with the facts that $\Gamma(1)= 1$ and $\Gamma(1/2) = \sqrt{\pi}$. Therefore, a closed form expression for $\Lambda_{\plane}(n)$ for these values of $n$ can be obtained. 

As a concrete example, we describe the computation when $n=8$.

\begin{example}[Computation of $\Lambda_{\rm plane}(8)$]
When $n=8$, \eqref{eq:final lambda plane expression} reads
\begin{align}\label{eq:final lambda plane expression 8}
    \Lambda_{\plane}(8) 
    & = \omega_{7}^{1/8} \, \frac{7
\left(\frac{\sqrt{\pi} \Gamma\left(\frac{7}{2}\right)}{\Gamma\left(4\right)} - {}_2F_1\left(\frac{1}{2}, \frac{-{5}}{2} ;  \frac{3}{2} ;  \frac{1}{4}\right)\right) - \left(\frac{\sqrt{3}}{2}\right)^{7}}{\left(\frac{\sqrt{\pi} \Gamma \left(\frac{9}{2}\right)}{\Gamma\left(5\right)}-{}_2F_1\left(\frac{1}{2}, \frac{-{7}}{2};  \frac{3}{2}; \frac{1}{4}\right) \right)^{7/8}}\,.
\end{align}
Let us briefly summarize how to compute \eqref{eq:final lambda plane expression 8}. First,
\begin{align}\label{eq:half 8 computation}
   \omega_7 = \frac{16\pi^3}{105}\,,\quad \Gamma\left(\mfrac{7}{2}\right) = \frac{15\sqrt{\pi}}{8}\,,\quad \Gamma(4)=6\,, \quad \Gamma\left(\mfrac{9}{2}\right)=\frac{105\sqrt{\pi}}{16} \,, \quad \Gamma(5)=24\,.
\end{align}
Next, choosing $b=1/2$ in \eqref{eq:gauss contiguous relation 2}, using the formula \eqref{eq:special hyper}, and simplifying yields
\begin{align}\notag
    {}_2F_1\left(\mfrac{1}{2}, \mfrac{-1}{2};  \mfrac{3}{2}; z\right)&= \frac{1}{\frac{3}{2}-\frac{1}{2}}\left[z(1-z)\frac{d[{}_2F_1(\frac{1}{2},\frac{1}{2};\frac{3}{2};z)]}{dz} - \left(\mfrac{1}{2}-\mfrac{3}{2}+\mfrac{z}{2}\right){}_2F_1\left(\mfrac{1}{2},\mfrac{1}{2};\mfrac{3}{2};z\right)\right]\\ \notag
    &=  z(1-z)\left(\frac{1}{2z \sqrt{1-z}}- \frac{\arcsin(\sqrt{z})}{2z^{3/2}} \right) - \left(\frac{z}{2}-1 \right)\frac{\arcsin(\sqrt{z})}{\sqrt{z}} \\ \notag
    &= \frac{\sqrt{1-z}}{2}+\frac{\arcsin(\sqrt{z})}{2\sqrt{z}}\,.
\end{align}
Iterating this procedure three more times gives
\begin{align}\notag
{}_2F_1\left(\mfrac{1}{2}, \mfrac{-3}{2};  \mfrac{3}{2}; z\right)
&= \frac{1}{\frac{3}{2}-\frac{-1}{2}}\left[z(1-z)\frac{d[{}_2F_1(\frac{1}{2},\frac{-1}{2};\frac{3}{2},z)]}{dz} - \left(\mfrac{-1}{2}-\mfrac{3}{2}+\mfrac{z}{2}\right){}_2F_1\left(\mfrac{1}{2},\mfrac{-1}{2};\mfrac{3}{2} ; z\right)\right] \\ \notag
&=\sqrt{1-z}\left(-\mfrac{1}{4}z  +\mfrac{5}{8}\right)+\frac{3 \arcsin(\sqrt{z})}{8 \sqrt{z}}\,, \\ \notag
{}_2F_1\left(\mfrac{1}{2}, \mfrac{-5}{2};  \mfrac{3}{2}; z\right)
&= \frac{1}{\frac{3}{2}-\frac{-3}{2}}\left[z(1-z)\frac{d[{}_2F_1(\mfrac{1}{2},\mfrac{-3}{2};\mfrac{3}{2};z)]}{dz} - \left(\mfrac{-3}{2}-\mfrac{3}{2}+\mfrac{z}{2}\right){}_2F_1\left(\mfrac{1}{2},\mfrac{-3}{2};\mfrac{3}{2}; z\right)\right]\\ \label{eq:n=8 5formula} 
&= \sqrt{1-z}\left(\mfrac{1}{6}z^2  -\mfrac{13}{24} z +\mfrac{11}{16}\right)+\frac{5 \arcsin(\sqrt{z})}{16 \sqrt{z}}\,,\qquad \mbox{and} \\ \notag
{}_2F_1\left(\mfrac{1}{2}, \mfrac{-7}{2};  \mfrac{3}{2} ;  z\right) &= \frac{1}{\frac{3}{2}-\frac{-3}{2}}\left[z(1-z)\frac{d[{}_2F_1(\frac{1}{2},\frac{-3}{2};\frac{3}{2};z)]}{dz} - \left(\mfrac{-3}{2}-\mfrac{3}{2}+\mfrac{z}{2}\right){}_2F_1\left(\mfrac{1}{2},\mfrac{-3}{2};\mfrac{3}{2} ; z\right)\right] \\ \label{eq:n=8 7formula}
&=\sqrt{1-z}\left(-\mfrac{1}{8}z^3+\mfrac{25}{48}z^2  -\mfrac{163}{192} z +\mfrac{93}{128}\right)+\mfrac{35 \arcsin(\sqrt{z})}{128 \sqrt{z}}\,.
\end{align}
Since $\arcsin(\sqrt{1/4})=\pi/6$, we can insert \eqref{eq:half 8 computation}, \eqref{eq:n=8 5formula}, and \eqref{eq:n=8 7formula} into \eqref{eq:final lambda plane expression 8} and simplify. This yields 
\begin{equation}
    \label{eqn: Lambda Plane 8}
\Lambda_{\rm plane}(8) = 4\Big(\frac{2}{3}\Big)^{1/4}\pi^{3/8}\Big(-\frac{837\sqrt{3}}{35}+16\pi\Big)^{1/8}.
\end{equation}
\end{example}

In Table \ref{table:exact values}, we include exact values of $\Lambda_{\plane}(n)$ for some low values of $n$, which can be obtained by hand by carrying out the procedure laid out above, just as in the example above.


\begin{table}
\setlength{\arrayrulewidth}{0.3mm}
\setlength{\tabcolsep}{22pt}
\renewcommand{\arraystretch}{1.5}
\renewcommand{\arraystretch}{2.5}
\begin{center}
\begin{tabular}{||c c ||} 
 \hline
$n$ & $\Lambda_{\plane}(n)$  \\ [0.5ex] 
 \hline\hline
 8 & $4\left(\frac{2}{3}\right)^{1 / 4} \pi^{3 / 8}\left(-\frac{837 \sqrt{3}}{35}+16 \pi\right)^{1 / 8}$
 \\ 
 \hline
 9 & $\frac{3\left(\frac{53}{35}\right)^{1 / 9} 3^{2 / 3} \times 11^{2 / 9} \pi^{4 / 9}}{2 \times 2^{2 / 9}}$ 
 \\ 
 \hline
10 & $\frac{5^{4 / 5}(2 \pi)^{2 / 5}\left(-\frac{891 \sqrt{3}}{7}+80 \pi\right)^{1 / 10}}{3^{1 / 5}}$ 
\\ 
 \hline
 11 & $\frac{11^{10 / 11}\left(\frac{7199}{7}\right)^{1 / 11} \pi^{5 / 11}}{2 \times 2^{2 / 11} \times 3^{3 / 11}}$ 
 \\
  \hline
  12 & $\frac{2 \sqrt{2} 3^{3 / 4} \pi^{5 / 12}\left(-\frac{729 \sqrt{3}}{11}+40 \pi\right)^{1 / 12}}{5^{1 / 6}}$
  \\
    \hline
  13 & $\frac{13^{12 / 13}\left(\frac{407521}{385}\right)^{1 / 13} \pi^{6 / 13}}{2 \times 6^{3 / 13}}$ 
  \\
    \hline
  14 & $\frac{\sqrt{2} 7^{6 / 7} \pi^{3 / 7}\left(-\frac{12393 \sqrt{3}}{13}+560 \pi\right)^{1 / 14}}{3^{3 / 14} \times 5^{1 / 7}}$ 
  \\
    \hline
  15 & $\frac{3^{11 / 15} \times 5^{14 / 15}\left(\frac{464389}{1001}\right)^{1 / 15} \pi^{7 / 15}}{2 \times 2^{1 / 5}}$ 
  \\
    \hline
  16 &  $\frac{8 \times 2^{5 / 16} \pi^{7 / 16}\left(-\frac{277749 \sqrt{3}}{143}+1120 \pi\right)^{1 / 16}}{3^{3 / 16} \times 35^{1 / 8}}$
  \\
    \hline
\end{tabular}
\end{center}
\caption{Exact values of $\Lambda_{\rm plane}(n)$ for small $n$.}
\label{table:exact values}
\end{table}

\subsubsection{Evaluating $M(k,l)$ when $k$ and $l$ are odd}\label{r:simplified-Lambda-Lawson}
Suppose $k$ and $l$ are both positive odd integers. 
In this case, evaluating the expression for the competitor energy $M(k,l)$ in \eqref{eqn: Lambda Lawson bounds} 
 boils down to integrating polynomials, and thus can be expressed without using special functions. For simplicity, let us demonstrate this in the case of the Simons' cones, $k=l$, for $k$ odd. In this case, $\lambda=1$ and the constants from \eqref{eqn: r and h} and  \eqref{eqn: rho and d} satisfy $r=\rho$ and $h=d$.
Starting from \eqref{eqn: Lawson comp Flint perim}, then using the binomial expansion twice and integrating,
\begin{align}\notag   
\frac{\mathcal{H}^{n-1}( \partial E)}{2(k+1)^2\omega_{k+1}^2}& =   \int_{\partial \hat{E} \cap \{ u<v\}} u^k v^k \, d \mathcal{H}^{1} \\ \notag 
&= r  \int_1^{r-h}u^k \left( r^2 - (u +h )^2\right)^{\frac{k-1}{2}}\,du \\ \notag
 &= r  \int_{1}^{r-h}\sum_{j=0}^{(k-1)/2}r^{k-1 - 2j}\binom{\frac{k-1}{2}}{j}(-1)^j\sum_{\ell=0}^{2j}\binom{2j}{\ell}u^{k+\ell}h^{2j - \ell}\, du\\ \notag
 &=  \sum_{j=0}^{(k-1)/2}r^{k- 2j}\binom{\frac{k-1}{2}}{j}(-1)^j\sum_{\ell=0}^{2j}\binom{2j}{\ell}\frac{u^{k+\ell+1}}{k+\ell+1}h^{2j - \ell} \Big|_{u=1}^{u=r-h}\,.
 \end{align}
Similary, starting from \eqref{eqn: Lawson comp Flint vol}, the volume term can be computed as
 \begin{align}\notag
\frac{|E|}{\omega_{k+1}^2} 
 &= 1 + 2(k+1) \int_1^{r-h} u^k \left( r^2  - (u +h )^2\right)^{\frac{k+1}{2}} \, du \\ \notag
 &= 1 + {2(k+1)}\sum_{j=0}^{(k+1)/2}r^{k+1 - 2j}\binom{\frac{k+1}{2}}{j}(-1)^j\sum_{\ell=0}^{2j}\binom{2j}{\ell}\frac{u^{k+\ell+1}}{k+\ell+1}h^{2j - \ell} \Big|_{u=1}^{u=r-h}\,.
\end{align}
Of course, the analogous computations hold when $k$ and $l$ are distinct and both odd. Thus, in these cases,
the expression for $M(k,l)$ in \eqref{eqn: Lambda Lawson bounds} can be expressed in terms of rational powers of integers and $\pi$. For example, when $n=8$ and $k=l=3$,
\[
M(3,3) = \frac{2\left(\frac{2}{105}\right)^{1 / 8}\left(-699776+494843 \sqrt{2}-404096 \sqrt{3}+285740 \sqrt{6}\right) \sqrt{\pi}}{\left(913063-645632 \sqrt{2}+527138 \sqrt{3}-372736 \sqrt{6}\right)^{7 / 8}}.
\]


\section{Capillarity problems inside cones}\label{sec:cap}

For an open $K\subset \R^n$ and $\beta \in (-1,1)$, consider the problem

\begin{align}\label{eq:cap inside Lawson}
   \Lambda(K,\beta):= \inf\{P(E;K) - \beta P(E;\partial K) : E \subset K,\, |E|=1/2  \}\,.
\end{align}

\begin{remark}[On $\Lambda(K,\beta)$]
    We have chosen this notation because when $\beta=1/2$ and $K$ is a half-space, $\Lambda(K,\beta) = \Lambda_{\rm plane}(n)/2$. This follows from the characterization of the upper half of the first chamber of the lens cluster as the unique minimizer in a classical capillarity problem \cite[Theorem 3.4]{BroNov24} and the local minimality of the lens. While it is plausible to expect the connection between locally minimizing $(1,2)$-clusters and minimizers for capillarity problems to extend to cones other than planes which possess some symmetry analogous to reflection over a hyperplane, such as invariance under the map $(x_1,\dots,x_{k+1},y_{1},\dots,y_{k+1})\mapsto (y_1,\dots,y_{k+1},x_{1},\dots,x_{k+1})$ for the cones $C_{k,k}$, this is still open. Note that in fact it is not known whether the singular minimizer constructed in Theorem \ref{t:higher-dim} blows down $C_{k,k}$ even if starting with $C_{k,k}$ in the penalized problem \eqref{e:localized-min-problem}. The only relationship follows readily from the definitions is for example
    \begin{align}\notag
        2\Lambda(\{x_1^2 + \dots + x_{k+1}^2 < y_1^2 + \dots + y_{k+1}^2\},1/2) \geq \Lambda(C_{k,k})\,,
    \end{align}
    which follows from the fact that the admissible class for \eqref{eq:cap inside Lawson} is contained in that of $\Lambda(C_{k,k})$ via reflection over $C_{k,k}$.
\end{remark}

\begin{theorem}\label{t:capillary}
    Suppose that $K$ is an open cone, $\partial K$ has an isolated singularity and either $\beta\in (-1,0)$, or $\beta \in [0,1)$ and $K$ is a perimeter minimizer. If in addition
    \begin{align}\label{eq:suff cond for existence of cap}
    \Lambda(K,\beta)<\Lambda(\mathbb{R}^{n-1}\times (0,\infty),\beta) \,,   
    \end{align}
    then there exists a minimizer to the problem \eqref{eq:cap inside Lawson}. 
\end{theorem}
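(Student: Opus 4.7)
The plan is to mimic the compactness and concentration--compactness procedure of Sections \ref{s:setup}--\ref{sec: blowdown}, greatly simplified because the cone $K$ is fixed from the start, so there is no need for a confining potential or for an outer-ball localization. First, I would take a minimizing sequence $\{E_k\}$ for the problem \eqref{eq:cap inside Lawson} and establish a uniform upper bound on $P(E_k;K)$. For $\beta \in (-1,0)$ this is immediate since both terms in the energy are nonnegative. For $\beta \in [0,1)$ with $K$ perimeter minimizing, the minimality of $K$ against the competitor $K\setminus E_k$ gives $P(E_k;\partial K) \leq P(E_k;K)$, so the energy dominates $(1-\beta)P(E_k;K)$. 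Testing against a ball of volume $1/2$ placed inside $K$ gives a uniform upper bound on the energy, hence $\sup_k P(E_k;K)<\infty$. Standard almost-minimality arguments for volume-constrained capillarity functionals then give uniform lower and upper density estimates for $E_k$ at interior points and boundary points of $\partial K$.

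Next, I would apply the nucleation lemma to each $E_k$ to produce a uniformly bounded collection of points $\{x_{k,1},\dots,x_{k,N_0}\}$ such that $E_k$ is essentially contained in $\bigcup_i B_1(x_{k,i})$ and each ball captures a definite fraction of $|E_k|$. Grouping indices by whether $|x_{k,i}-x_{k,j}|$ remains bounded along a subsequence and passing to $L^1_{\loc}$ limits of the translated sets $E_{k,i}:=E_k - x_{k,i}$, I obtain $N\leq N_0$ concentrations $E_{\infty,i}$ of positive volumes $v_i$ summing to $1/2$, exactly as in Lemmas \ref{lemma:identifying concentrations} and \ref{lem: limit local min}. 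The ambient limit domain for $E_{\infty,i}$ depends on the behavior of $x_{k,i}$: if the sequence is bounded, $E_{\infty,i}\subset K$; if $|x_{k,i}|\to\infty$ but $\dist(x_{k,i},\partial K)$ stays bounded, then, since $\partial K$ is smooth away from the isolated singularity and its curvatures scale like $1/|x_{k,i}|$, the translated cones $K-x_{k,i}$ converge locally smoothly to a half-space $H_i$ and $E_{\infty,i}\subset H_i$; if $\dist(x_{k,i},\partial K)\to\infty$, then $K-x_{k,i}$ locally fills $\mathbb{R}^n$ and $E_{\infty,i}$ is an isoperimetric configuration in $\mathbb{R}^n$. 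In each case a standard variation argument shows $E_{\infty,i}$ minimizes the appropriate volume-constrained capillarity problem with volume $v_i$.

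The final step is an energy accounting argument. By scaling, any minimizer of volume $v$ costs $(2v)^{(n-1)/n}$ times the normalized constant ($\Lambda(K,\beta)$ for a bounded concentration, $\Lambda(\mathbb{R}^{n-1}\times(0,\infty),\beta)$ for a boundary escape, the Euclidean isoperimetric constant for an interior escape). Since a ball of volume $1/2$ inside a half-space is admissible for $\Lambda(\mathbb{R}^{n-1}\times(0,\infty),\beta)$ for every $\beta\in(-1,1)$, interior escapes cost at least as much as boundary escapes. Summing the resulting lower bounds across concentrations and comparing with
\[
\Lambda(K,\beta) = \lim_k \bigl( P(E_k;K) - \beta P(E_k;\partial K) \bigr),
\]
the strict concavity of $t\mapsto t^{(n-1)/n}$ forces $N=1$ as soon as there are multiple concentrations, and the strict inequality $\Lambda(K,\beta)<\Lambda(\mathbb{R}^{n-1}\times(0,\infty),\beta)$ rules out the escape of any mass at all. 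Thus the lone concentration is bounded and carries the full volume $1/2$, producing a minimizer for \eqref{eq:cap inside Lawson}.

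The main obstacle will be passing the boundary (wetting) term $-\beta P(E_k;\partial K)$ to the limit on an escaping concentration: one needs that the portion of $\partial^* E_k$ that attaches to $\partial K$ near $x_{k,i}$ converges, both as a set and in surface measure, to the wetting set of $E_{\infty,i}$ on $\partial H_i$. This requires the uniform density estimates to persist near $\partial K$ (which is where the hypotheses on $\beta$ and $K$ enter, to guarantee a usable competitor for cutting off a piece of $E_k$ near $\partial K$), together with the smooth convergence $\partial K - x_{k,i} \to \partial H_i$ that holds precisely because the singularity of $\partial K$ is isolated.
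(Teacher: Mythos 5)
Your overall architecture---nucleation, concentration into translated limits, and an energy accounting argument exploiting the strict concavity of $t\mapsto t^{(n-1)/n}$ together with \eqref{eq:suff cond for existence of cap}---matches the paper's and is the right idea. But there is a genuine gap at the very start: you cannot run this program on a raw minimizing sequence $\{E_k\}$ for \eqref{eq:cap inside Lawson}. The sentence ``standard almost-minimality arguments for volume-constrained capillarity functionals then give uniform lower and upper density estimates for $E_k$'' is not available, because a minimizing sequence for a variational problem has no almost-minimality property whatsoever; one can spoil any minimizing sequence with asymptotically negligible tentacles and wisps of mass that destroy lower density estimates while changing the energy only by $\mathrm{o}_k(1)$. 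Density estimates are a consequence of being an (almost-)minimizer of something, not of merely being a minimizing sequence, and without them the nucleation lemma only controls most of the volume (up to an $\eps$), not all of it, and the $L^1_\loc$ limits $E_{\infty,i}$ need not be minimizers of the local problems you want.

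The paper avoids this by introducing the localized and penalized minimization problems $\nu_k$ of \eqref{eq:penalized cap problem}, reusing exactly the confining potential $\cG_k$ from Section~\ref{s:setup}. The point is that the $E_k$ minimizing $\nu_k$ are honest minimizers on $B_{3R_k}$, so they satisfy a uniform one-sided almost-minimality condition \eqref{eq:almost cap minimality}; the penalization, via \eqref{eq:cap measure confinement}, prevents the bounded chamber from saturating the constraint $E_k\subset B_{3R_k}$; and the upper bound \eqref{eq:lawson beats plane capillarity} still ties their energy to $\Lambda(K,\beta)$, so the same energy accounting applies. The boundary density estimates then require real work (boundary-flattening diffeomorphisms and the anisotropic almost-minimizer estimates of De Philippis--Maggi), not a citation to a ``standard'' lemma. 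Your proposal would become correct if you replaced ``take a minimizing sequence for \eqref{eq:cap inside Lawson}'' by ``take minimizers of the penalized problems $\nu_k$'' and filled in the boundary density estimate step with the flattening argument; everything downstream that you describe then works essentially as the paper does.
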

\begin{remark}[On the assumption that $\partial K$ is area-minimizing if $\beta \in [0,1)$]
    We only use this assumption in one place, namely to obtain a short proof of the bound 
    \begin{align}\notag
        P(E) \lesssim P(E;K) - \beta P(E;\partial K) \,. \end{align}
    If this bound can be generalized to other open cones $K$ with isolated singularity, then the existence result in these cases follow as well. We have not attempted to optimize these assumptions here, partially because the cases in which we are interested are coming from locally minimizing $(1,2)$ clusters involve area-minimizing cones.
\end{remark}

\begin{proof}[Proof of Theorem \ref{t:capillary}]

The proof involves a concentration compactness argument which is similar to but simpler than Theorem \ref{t:higher-dim}, due to the fact that the reference cone is fixed in the capillarity problem. Therefore, we provide summaries in lieu of complete details.

\medskip

\noindent{\it Penalized problem and energy bounds}: Fix a sequence of radii $R_k\uparrow\infty$. With $g_{k}$ and $\cG_{k}$ as in Section \ref{s:compactness} we consider the energy
\begin{align}\notag
    \mathcal{F}_k(E;B_{4R_k}):= P(E;B_{4R_k} \cap K) - \beta P(E; B_{4R_k} \cap \partial K) + \cG_{k}(E)\,.
\end{align}
and the corresponding variational problem
\begin{align}\label{eq:penalized cap problem}
    \nu_k = \inf\{\mathcal{F}_k(E;B_{4R_k}) : |E|=1\,, E \subset K \cap B_{3R_k} \}\,.
\end{align}
When $\beta \leq 0$, existence follows by quoting the lower-semicontinuity result \cite[Proposition 19.1]{Mag12}. When $\beta>0$, since $\partial K$ only has an isolated singularity and is smooth elsewhere, straightforward modifications to the classical lower-semicontinuity for capillarity energies \cite[Proposition 19.3]{Mag12} yield the existence of a minimizer $E_k$ for $\nu_k$. Note that since the infimum $\Lambda(K,\beta)$ can be achieved by bounded sets (via a truncation argument) and $\mathcal{F}_k(E;B_{4R_k})=P(E;K) - \beta P(E;\partial K)$ if $E\subset B_{\sqrt{R_k}}$, via an energy comparison for $k$ sufficiently large we have the upper bound
\begin{align}\label{eq:lawson beats plane capillarity}
  \limsup_{k\to \infty}\mathcal{F}_k(E_k;B_{4R_k}) \leq \Lambda(K,\beta) \,,
\end{align}
which is the capillarity analogue of the admissibility of $\bar\X$ for $\mu_k$ in Section \ref{subsec:comp for lensshaped concentration}.

Towards obtaining uniform perimeter bounds for $E_k$, we first note that if $\beta<0$, then due to the fact that $E_k \subset B_{3R_k}$ we have the inequality
\begin{align}\label{eq:per bound when beta positive}
   |\beta|P(E_k) < P(E_k;K) - \beta P(E_k;\partial K) \leq \mathcal{F}_k(E;B_{4R_k}) 
\end{align}
On the other hand, if $\beta\geq 0$ and we are assuming perimeter minimality of $K$, then for any $E\subset K \cap B_{3R_k}$,
\begin{align}\notag
    P(K;B_{4R_k}) \leq P(K \setminus E;B_{4R_k})= P(E;K) + P(K;B_{4R_k}) - P(E;\partial K) \,.
\end{align}
Upon subtracting $P(K;B_{4R_k})-P(E;\partial K \cap B_{4R_k})$ from both sides, we obtain
\begin{align}\notag
    P(E;\partial K \cap B_{4R_k}) \leq P(E;K \cap B_{4R_k})\,,
\end{align}
which implies that for any $E\subset K \cap B_{3R_k}$,
\begin{align*}
(1-\beta)P(E)/2&= (1-\beta)P(E;K)/2+(1-\beta)P(E;\partial K \cap B_{4R_k})/2 \\
&\leq \mathcal{F}_k(E;B_{4R_k})  \,. 
\end{align*}
Thus by comparison arguments, replacing $E_k$ and $E_k \cap B_\rho(y)$ respectively with balls (cf.~ Section \ref{s:setup}), we obtain the respective $k$-independent energy bounds
\begin{align}\label{eq:cap total perimeter bound}
    P(E_k)/C_1 \leq \mathcal{F}_k(E_k;B_{4R_k}) &\leq C_1 \\ \label{eq:cap local perimeter bound}
    P(E_k;B_\rho(y)) &\leq C_2\rho^{n-1} + \frac{2\rho \max\{1,\omega_n \rho^n\}}{\sqrt{R}}\qquad \forall y\in \overline{K}
\end{align}
for some $C_1>1$ and $C_2>0$. As a consequence of \eqref{eq:cap total perimeter bound}, the penalization term again forces confinement of $E$ to balls $B_{\sqrt{R_k}+s}$ in a measure sense (cf.~ \eqref{eqn: measure confinement}): there is a constant $C_3>0$ (independent of $k$) such that for any $s>0$,
\begin{equation}\label{eq:cap measure confinement}
   |E \setminus B_{\sqrt{R_k}+s}|  \leq \frac{ C_3\sqrt{R_k}}{s}\,.  
\end{equation}

\medskip

\noindent{\it Nucleation, volume fixing variations, and almost-minimality}: As in Theorem \ref{t:higher-dim}, the global perimeter bound \eqref{eq:cap total perimeter bound} allows for the application of the nucleation lemma \cite[Lemma 29.10]{Mag12} to identify a finite number, say $N_k$, of balls $B_1(x_{k,i})$ (with $N_k \leq N_0$), each containing at least a fixed amount of volume of $E_k$ and whose union contains most of the volume of $E_k$ (again independently of $k$). By choosing the parameters in the same fashion as in Step 1 of Lemma \ref{lem: density estimates}, we obtain the analogue of \eqref{eqn: g bound} for the capillarity problem, which guarantees that 
\begin{align}\label{eq:g bound for cap}
 |x_{k,i}|\leq \mathrm{O}(\sqrt{R_k})\qquad \mbox{and}\qquad   g_{k}(|x_{k,i}|) \leq C_4 \qquad \forall k,\,i
\end{align}
for some $C_4$ independent of $k$. 

Now we are going to construct uniform volume fixing variations and use it to derive a ``one-sided" minimality inequality that will yield lower density estimates (such a property was used in \eqref{eq:pot estimate} in Lemma \ref{lem: density estimates}). First, note that since $\partial K$ is a regular cone, then for any $x_{k,i}$ escaping to infinity as $k\to \infty$, $K \cap B_1(x_{k,i})$ is approximately the intersection $B_1(x_{k,i})$ with the halfspace determined by the nearby tangent hyperplanes of $K$. By the uniform lower bound on $|B_1(x_{k,i})\cap E_k|$ from the nucleation lemma and the fact that $E_k\subset K$, the measures of $|K \cap B_1(x_{k,i})|$ are uniformly bounded from below in $i$ and $k$. Combining all these facts, we see that for any $i$, up to subsequence, not relabelled, there is an open set $A_i$ such that $K \cap B_1(x_{k,i})$ converges to some set $A_i \subset \bar{K}$ as $k\to+\infty$ in the Hausdorff sense. By the uniform perimeter bound \eqref{eq:cap total perimeter bound} and the lower bounds on $|B_1(x_{k,i}) \cap E_k|$, for each $i$ there is a limiting set of finite perimeter $E_{\infty,i}\subset A_i$ for $B_1(x_{k,i}) \cap E_k$. Thus by combining the usual volume fixing variations along a subsequence with the uniform bound \eqref{eq:g bound for cap} for $g_{k}$ as in Step 2 of the proof of Lemma \ref{lem: density estimates}, we may obtain volume fixing variations with two important properties: first, the variations are compactly supported away from $\partial K$ (so they do not influence the energy along $\partial K$) and second, for each volume increment $\sigma$, the variation in the energy $\mathcal{F}_k$ is at most $C_5|\sigma|$ for some $C_5>0$ independent of $k$. 

Next, by combining the minimality of $E_k$ for $\nu_k$ with the volume fixing variations, we claim there are $r_0>0$ and $\Lambda>0$, both independent of $k$, such that $E_k$ satisfies the ``one-sided" almost-minimizing condition
\begin{align}\notag 
    &P(E_k;K \cap B_{4R_k}) - \beta P(E_k;\partial K\cap B_{4R_k}) \\ \label{eq:almost cap minimality}
    &\qquad \qquad \leq P(E;K\cap B_{4R_k})- \beta P(E;\partial K\cap B_{4R_k}) + \Lambda\big||E_k| -|E|\big|
\end{align}
whenever $E\subset E_k$ and $E_k \setminus E \subset B_{r_0}(y)\subset B_{3R_k}$. Indeed, choosing $r_0$ small enough so that $\omega_nr_0^n$ of volume can be added using the volume fixing variations, this follows by a direct comparison argument using the minimality of $E_k$ for $\nu_k$. 

\medskip

\noindent{\it Lower density estimates}: We claim now that there exists $c_0$ and $\rho_0$, both independent of $k$, such that if $y \in 
B_1^c \cap  \spt\, \mathcal{H}^{n-1}\mres (\partial^* E_k \cap K)$ and $0<\rho<\rho_0$, then
\begin{align}\label{eq:lower density cap}
     |E_k \cap B_{\rho}(y)| \geq c_0 \rho^n\,.
\end{align}
If $y$ is at a fixed distance $d$ from $\partial K$, then such an estimate for radii $\rho<\min\{d,\rho_1\}$ holds by the usual comparison argument with $E_k \setminus B_\rho(y)$ \cite[Chapter 21]{Mag12}. On the other hand, for $y$ nearby $\partial K$ (and away from the singularity by assumption), we instead transform the isotropic capillarity problem in $K \cap B_\rho(y)$ to an anisotropic one in $H \cap A$, where, for some boundary flattening diffeomorphism $\Phi$ and halfspace $H$, we have $\Phi(K \cap B_\rho(y)) = H \cap A$. We will appeal to lower density results for anisotropic capillarity functionals in halfspaces from \cite{DePMag15}, as we now describe. After this boundary flattening, the one-sided minimality condition \eqref{eq:almost cap minimality} holds for $\Phi(E_k)$ in $A \cap H$ with respect to an anisotropic energy 
\begin{align}\notag
   \int_{H \cap \partial^* \Phi(E_k)} \Psi(\nu_{\Phi(E_k)}) \,d\mathcal{H}^{n-1} - \beta \int_{\partial H \cap \partial^* \Phi(E_k)} \Psi(\nu_{\Phi(E_k)}) \,d\mathcal{H}^{n-1}
\end{align}
and new constants $\Lambda'$, $r_0'$ that depend on the diffeomorphism $\Phi$, in place of $\Lambda$, $r_0$ respectively; see e.g.~ \cite[Lemma 2.17, Proof of Theorem 1.2]{DePMag15} for details on how elliptic anisotropies and almost minimizers behave under such transformations. Furthermore, we may further transform this energy by eliminating the adhesion term along $\partial H$ and changing the anisotropy \cite[Lemma 6.1]{DePMag15}, so that the one-sided minimality condition holds with respect to an energy of the form
\begin{align}\notag
   \int_{H \cap \partial^* \Phi(E_k)} \tilde{\Psi}(\nu_{\Phi(E_k)}) \,d\mathcal{H}^{n-1}\,.
\end{align}
This is done so that we can repeat the argument in \cite[Proof of (2.48) in Lemma 2.8]{DePMag15}, which, similar to the interior case, is based on comparison with $\Phi(E_k) \setminus B_{\rho'}(\Phi(y))$ and isoperimetry, and requires an anisotropic energy with no adhesion term. We thus obtain local lower density bounds for $\Phi(E_k)$ at $\Phi(y)$. These can be translated to $k$-independent lower density bounds for $E_k$ at $y$ through the diffeomorphism $\Phi^{-1}$. It remains to show that such estimates can be made independently of $y \in 
B_1^c \cap  \spt\, \mathcal{H}^{n-1}\mres (\partial^* E_k \cap K)$ and $0<\rho<\rho_0$. This is where we use the fact that $\partial K$ is a smooth cone away from $0$, since it implies that the diffeomorphisms used to straighten $\partial K$ at a fixed order $1$ scale converge to the identity as $|y|\to \infty$. The estimates in \cite[Lemma 2.8, Lemma 6.1]{DePMag15} depend only on suitable ellipticity/smoothness type properties of $\Psi$, $\tilde{\Psi}$ and $\big|1-|\beta|\big|$ (see the paragraph below \cite[Equation (6.7)]{DePMag15}). Since $\Phi$ approximates the identity map in every $C^k$ if $|y|$ is large and $|\beta|<1$, the desired $y$-independence follows.

\medskip

\noindent{\it Conclusion}: The conclusion of the proof is a simplified version of the arguments at the end of the proof of Theorem \ref{t:higher-dim}, since we do not need to deal with blowdown of an exterior interface. We first apply the nucleation lemma, with parameter $\varepsilon$ (bounding the volume of $E_k$ appearing outside the balls) chosen small enough so that $\omega_n\rho_0^n>\e$. Combining this with the density estimate \eqref{eq:lower density cap}, for each $k$ we then deduce that each $E_k$ is contained in the union of finitely many balls $B_{R_0}(x_{k,i})$, $1\leq i \leq I$. Let us call $v_{k,i}=|E_k \cap B_{R_0}(x_{k,i})|$, with
\begin{align}\label{eq:equiv class 2}
    \dist(x_{k,i},x_{k,j})\to \infty\qquad \mbox{if $i\neq j$.}
\end{align}
Up to a subsequence we may assume by the lower density bound that $v_{k,i}\to v_i$ for some $v_i>0$ for each $i$. We claim that if $|x_{k,i}|\to \infty$ for any $i$, then 
\begin{align}\label{eq:goes bound}
\mathcal{F}_k(E_k;B_{R_0}(x_{k,i})) = 
v_i^{(n-1)/n}\Lambda(\mathbb{R}^{n-1}\times (0,\infty),\beta) + v_i g_{k} (|x_{k,i}|)  + {\rm o}_k(1)\,.
\end{align}
Indeed, the upper bound for the energy by the right hand side follows by a comparison argument and the fact that ${\rm Lip}(g_{k})\to 0$, and the lower bound follows by straightening the boundary and using the fact that the resulting anisotropies are arbitrarily close to the identity together with the minimality of the lens. On the other hand, by \eqref{eq:equiv class 2}, there is at most one $i$ for which $|x_{k,i}|$ remains bounded, which without loss of generality we may assume is $i=1$ if it exists (otherwise we simply start counting from $i=2$ and call $v_1=0$), for which we have
\begin{align}\label{eq:stays bound}
    \mathcal{F}_k(E_k;B_{R_0}(x_{1,k})) = 
v_1^{(n-1)/n} \Lambda(K,\beta)+{\rm o}_k(1)\,.
\end{align}
The lower bound of the left hand side by the right hand side is a consequence of the definition \eqref{eq:cap inside Lawson}, and the upper bound follows by energy comparison, analogously to \eqref{eq:lawson beats plane capillarity}. By using in order \eqref{eq:lawson beats plane capillarity}, \eqref{eq:goes bound}-\eqref{eq:stays bound}, \eqref{eq:suff cond for existence of cap}, and the concavity of $v\mapsto v^{(n-1)/n}$, we conclude that 
\begin{align}\notag
   \Lambda(K,\beta)&\geq \limsup_{k\to \infty}\mathcal{F}_k(E_k;B_{4R_k})\\ \notag
   &\geq \limsup_{k\to \infty}\sum_{i\geq 2} v_i^{(n-1)/n}\Lambda(\mathbb{R}^{n-1}\times (0,\infty),\beta) + v_1^{(n-1)/n} \Lambda(K,\beta)\\ \notag
   &\geq \limsup_{k\to \infty}\sum_{i} v_i^{(n-1)/n}\Lambda(K,\beta) \geq \left( \sum_i v_i\right)^{(n-1)/n}\Lambda(K,\beta) = \Lambda(K,\beta)\,.
\end{align}
But since each $v_i$ is positive for $i\geq 2$ and \eqref{eq:suff cond for existence of cap} holds, equality can only hold in the first inequality in the final line above if no $x_{k,i}$ can diverge to infinity, which means there is one large ball $B_{R_1}$ containing every $E_k$. By compactness, minimality of $E_k$ for $\nu_k$, and lower-semicontinuity, we obtain limiting $E$ minimizing \eqref{eq:cap inside Lawson}. 
\end{proof}

\begin{remark}
    Note that in the setting of Theorem \ref{t:higher-dim}, despite the validity of the strict inequality $\Lambda(\partial K) < \Lambda_{\plane}(n)$, we cannot conclude that none of the $x_{k,i}$ can diverge to infinity, since we are missing both the analogue of \eqref{eq:stays bound} for both the concentration that might remain bounded, and for any concentration that might diverge to infinity in a singular-shaped form (i.e. not lens-shaped). On the other hand, if we knew $\partial K$ had the lowest density among all area-minimizing hypercones in the given dimension, we could guarantee these missing properties; see the discussion following Theorem \ref{t:higher-dim}.
\end{remark}

\subsection*{Methods} The large language model ChatGPT was used in the writing of the C code, which is attached in the supplementary files. The authors have reviewed and edited the code, and accept responsibility for its content.

\subsection*{Acknowledgments} The authors thank Dominik Stantejsky for providing a numerical estimation of $\Lambda(C_{3,3})$. M.N. is grateful to Sean Carney for a valuable discussion regarding numerical computation of $\Lambda_{\plane}$ and $\Lambda_{\Lawson}$. The authors are grateful to Marcus Greenwood for providing assistance with producing the FLINT code for the numerical computations of $\Lambda_{\plane}(n)$ and $M(k,l)$. A.S. is grateful for the generous support of Dr.~ Max R\"ossler, the Walter Haefner Foundation and the ETH Z\"urich Foundation. This research was partially conducted during the period A.S. served as a Clay Research Fellow.
RN is grateful for the support of NSF grants DMS-2340195, DMS-2155054, and DMS-2342349. LB is supported by an NSERC Discovery grant. Part of this work was done when R.N., M.N., and A.S. were visiting L.B. at McMaster University.
\appendix

\section{Monotonicity identity}

The next lemma is the usual monotonicity identity but without the assumption that the distributional mean curvature is absolutely continuous with respect to the weight measure. This entails essentially no change in the proof, which we include for convenience.

\begin{lemma}[Monotonicity]\label{lemma:monotonicity}
    Let $V=\underline{v}(M,\theta)$ be a rectifiable $m$-varifold in an open set $U \subset \mathbb{R}^{m+\ell}$ with locally bounded first variation $\delta V$, so that $\delta V$ is a Radon measure and, setting $\nu = D_{|\delta V|}\delta V$, 
\begin{align}\label{eq:first variation}
    \int_{U} \Div_{M} X \, d\mu_V = \int_U X \cdot \nu\,d|\delta V|\qquad \forall X \in C_c^\infty(U;\mathbb{R}^{m+\ell}),
\end{align}
where $\mu_V = \theta \mathcal{H}^m \mres M$; cf. \cite[Chapter 8]{Sim83}. Then, for any $x\in U$ and $0<r<s<\dist(x,\partial U)$, 
\begin{align}\notag
\frac{\mu_V(B_s(x))}{s^m} - \frac{\mu_V(B_r(x))}{r^m} &= \int_{B_s(x) \setminus B_r(x)}\frac{|(y-x)^\perp|^2}{|y-x|^{m+2}}\,d\mu_V \\ \label{eq:monotonicity identity}
&\qquad + \int_{B_s(x)}\bigg[\frac{1}{s^m}-\frac{1}{\max\{r,|y-x|\}^m}\bigg]\frac{(y-x)\cdot \nu}{m}\,d|\delta V|\,.
\end{align}
\end{lemma}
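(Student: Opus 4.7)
The plan is to derive \eqref{eq:monotonicity identity} by testing the first variation identity \eqref{eq:first variation} against the radial vector field
\begin{align*}
X_\rho(y) := (y-x)\,h\bigl(|y-x|/\rho\bigr),
\end{align*}
where $h \in C^\infty([0,\infty);[0,1])$ is a cutoff with $h \equiv 1$ near $0$ and $\spt h \subset [0,1)$, producing an ODE in $\rho$ which I then integrate from $r$ to $s$ and pass to the limit $h \nearrow \chi_{[0,1]}$.

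First I would compute, using an orthonormal basis $\{e_i\}$ for the approximate tangent space $T_y M$ and the orthogonal decomposition $|y-x|^2 = |(y-x)^T|^2 + |(y-x)^\perp|^2$, that
\begin{align*}
\Div_M X_\rho = m\,h + h'\,\frac{|(y-x)^T|^2}{\rho\,|y-x|} = m\,h + h'\,\frac{|y-x|}{\rho} - h'\,\frac{|(y-x)^\perp|^2}{\rho\,|y-x|},
\end{align*}
where $h, h'$ are evaluated at $|y-x|/\rho$. Setting
\begin{align*}
I(\rho) := \int h\,d\mu_V, \quad J(\rho) := \int h\,\frac{|(y-x)^\perp|^2}{|y-x|^2}\,d\mu_V, \quad K(\rho) := \int h\,(y-x)\cdot\nu\,d|\delta V|,
\end{align*}
differentiation under the integral sign gives $\int h'\,|y-x|/\rho\,d\mu_V = -\rho\,I'(\rho)$ and $\int h'\,|(y-x)^\perp|^2/(\rho|y-x|)\,d\mu_V = -\rho\,J'(\rho)$. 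Hence \eqref{eq:first variation} becomes $m I(\rho) - \rho I'(\rho) + \rho J'(\rho) = K(\rho)$; dividing through by $\rho^{m+1}$ and recognizing $\rho I'(\rho) - m I(\rho) = \rho^{m+1} \frac{d}{d\rho}[\rho^{-m} I(\rho)]$ yields
\begin{align*}
\frac{d}{d\rho}\Bigl[\frac{I(\rho)}{\rho^m}\Bigr] = \frac{J'(\rho)}{\rho^m} - \frac{K(\rho)}{\rho^{m+1}}.
\end{align*}

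Next, I would integrate this identity from $r$ to $s$ and let $h \nearrow \chi_{[0,1]}$, using dominated convergence (all integrands being bounded and $\mu_V, |\delta V|$ locally finite) to conclude $I(\rho) \to \mu_V(B_\rho(x))$ and analogously for $J, K$. Interpreting $J'(\rho)\,d\rho$ in the limit as the Stieltjes measure associated to the monotone function $\rho \mapsto \int_{B_\rho(x)} |(y-x)^\perp|^2 |y-x|^{-2}\,d\mu_V$ and applying Fubini,
\begin{align*}
\int_r^s \frac{J'(\rho)}{\rho^m}\,d\rho = \int_{B_s(x) \setminus B_r(x)} \frac{|(y-x)^\perp|^2}{|y-x|^{m+2}}\,d\mu_V,
\end{align*}
while the elementary computation $\int_{\max(r,|y-x|)}^s \rho^{-m-1}\,d\rho = m^{-1}\bigl[\max(r,|y-x|)^{-m} - s^{-m}\bigr]$ combined with another application of Fubini gives
\begin{align*}
-\int_r^s \frac{K(\rho)}{\rho^{m+1}}\,d\rho = \int_{B_s(x)} \Bigl[\frac{1}{s^m} - \frac{1}{\max(r,|y-x|)^m}\Bigr]\frac{(y-x)\cdot \nu}{m}\,d|\delta V|,
\end{align*}
matching the two right-hand side terms of \eqref{eq:monotonicity identity}.

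The main subtlety is the joint passage to the limit $h \nearrow \chi_{[0,1]}$ and the extension of the identity to \emph{every} pair $0<r<s$ rather than almost every pair. Both sides of \eqref{eq:monotonicity identity} depend continuously on $(r,s)$ wherever $\mu_V(\partial B_r(x)) = \mu_V(\partial B_s(x)) = 0$ and $|\delta V|(\partial B_r(x)) = |\delta V|(\partial B_s(x)) = 0$, which holds for all but countably many radii; after obtaining the identity on this full-measure set, the remaining radii are handled by taking one-sided limits in $r$ and $s$, since the jumps on the two sides cancel by construction of the regularization $X_\rho$.
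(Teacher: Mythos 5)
Your proposal is correct and follows the same route as the paper: test the first variation against the radial cutoff field $(y-x)\,h(|y-x|/\rho)$, expand $\Div_M X_\rho$ via the orthogonal decomposition of $y-x$, multiply the resulting scalar identity by $\rho^{-m-1}$, integrate over $[r,s]$, and let $h$ approach an indicator. The computations (the divergence expansion, the ODE $\frac{d}{d\rho}[\rho^{-m}I]=\rho^{-m}J'-\rho^{-m-1}K$, the Fubini/change-of-order evaluations of the two right-hand side integrals) all check out.

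The one place where the two presentations differ is how the perpendicular term is treated in the limit. The paper, with a smooth cutoff $\varphi$ still in place, integrates by parts in $\rho$ against $\rho^{-m}$ so that every surviving $\rho$-integrand involves $\varphi$ itself rather than $\varphi'$; a single pass of dominated convergence then gives the identity directly for \emph{every} pair $0<r<s$, with no exceptional radii. You instead keep $J'(\rho)\,d\rho$ and pass to the limit by interpreting it as the Stieltjes measure of the monotone function $\rho\mapsto\int_{B_\rho}|(y-x)^\perp|^2|y-x|^{-2}\,d\mu_V$, which requires a weak-$*$ convergence argument and consequently the two-step extension you sketch (first for the co-countably many radii where no sphere carries $\mu_V$- or $|\delta V|$-mass, then all radii by continuity). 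That salvage works, though your closing phrase about ``jumps on the two sides cancel'' is better stated as: with the open-ball convention, all four terms in \eqref{eq:monotonicity identity} are left-continuous in both $r$ and $s$, so equality on a dense set propagates by one-sided limits. Net: both routes are sound, but the paper's ordering of the integration by parts and the limit is a small bookkeeping simplification worth adopting, as it removes the exceptional-radii step.
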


\begin{proof}
    For ease of notation, let us take $x=0$. For $\rho \in (r,s)$, let $X(y) = \varphi(|y|/\rho)y$, where $\varphi \in C_c^1([0,1);[0,1])$. Denoting $\hat y^\perp=\mathbf{proj}_{T_yM^\perp}(y/|y|)$, direct computation yields
\begin{align*}
    \Div_M X = \varphi'\bigg(\frac{|y|}{\rho}\bigg)\frac{|y|}{\rho}\big( 1-|\hat y^\perp|^2\big) + m \varphi\bigg(\frac{|y|}{\rho}\bigg)\,.
\end{align*}
Testing \eqref{eq:first variation} with this $X$ and isolating the term with $-|y|\varphi'|\hat y^\perp|^2/\rho$ on one side, we obtain
\begin{align}\notag
    - \int_U \varphi'\bigg(\frac{|y|}{\rho}\bigg)\frac{|y|}{\rho}|\hat y^\perp|^2 \,d\mu_V &= \int_U \varphi\bigg(\frac{|y|}{\rho}\bigg)y \cdot \nu\, d|\delta V| - \int_U \bigg[\frac{\varphi'(|y|/\rho)|y|}{\rho} + m\varphi(|y|/\rho)\bigg]\,d\mu_V\\ \label{eq:radial test 1}
    &= \int_U \varphi\bigg(\frac{|y|}{\rho}\bigg)y \cdot \nu\, d|\delta V| + \int_U \rho^{m+1}\frac{d}{d\rho}\bigg[\frac{\varphi(|y|/\rho)}{\rho^m}\bigg]\,d\mu_V\,.
\end{align}
We multiply both sides of \eqref{eq:radial test 1} by $\rho^{-m-1}$ and integrate in $\rho$ from $r$ to $s$, which gives
\begin{align}\notag
    -\int_r^s \int_U &\varphi'\bigg(\frac{|y|}{\rho}\bigg)\frac{|y|}{\rho^{m+2}}|\hat y^\perp|^2 \,d\mu_V \, d\rho \\ \notag
    &= \int_r^s\int_U \varphi\bigg(\frac{|y|}{\rho}\bigg)\frac{y \cdot \nu}{\rho^{m+1}}\, d|\delta V|\,d\rho + \int_r^s\int_U \frac{d}{d\rho}\bigg[\frac{\varphi(|y|/\rho)}{\rho^m}\bigg]\,d\mu_V\,d\rho \\ \label{eq:radial test 2}
    &= \int_Uy \cdot \nu\int_r^s \varphi\bigg(\frac{|y|}{\rho}\bigg)\frac{1}{\rho^{m+1}}\,d\rho\, d|\delta V| + \int_U  \bigg[\frac{\varphi(|y|/s)}{s^m}-\frac{\varphi(|y|/r)}{r^m}\bigg]\,d\mu_V\,,
\end{align}
where in the last line we have used Fubini's theorem and the fundamental theorem of calculus. The left hand side of \eqref{eq:radial test 2} we now rewrite using Fubini and then integration by parts in $\rho$:
\begin{align}\notag
    -\int_r^s \int_U \varphi'\bigg(\frac{|y|}{\rho}\bigg)\frac{|y|}{\rho^{m+2}}|\hat y^\perp|^2 \,d\mu_V \, d\rho&= \int_U |\hat y^\perp|^2 \int_r^s \bigg[- \varphi'\bigg(\frac{|y|}{\rho}\bigg) \frac{|y|}{\rho^2}\bigg]\frac{1}{\rho^m}\,d\rho \,d\mu_V \\ \label{eq:radial test 3}
    &= \int_U |\hat y^\perp|^2 \bigg[\frac{\varphi(|y|/s)}{s^m}-\frac{\varphi(|y|/r)}{r^m} + \int_r^s \frac{m\varphi(|y|/\rho)}{\rho^{m+1}}\,d\rho\bigg]d \mu_V.
\end{align}
Inserting \eqref{eq:radial test 3} back into the left hand side of \eqref{eq:radial test 2}, we arrive at 
\begin{align}\notag
    \int_U |\hat y^\perp|^2 &\bigg[\frac{\varphi(|y|/s)}{s^m}-\frac{\varphi(|y|/r)}{r^m} + \int_r^s \frac{m\varphi(|y|/\rho)}{\rho^{m+1}}\,d\rho\bigg]d \mu_V \\ \label{eq:radial test 4}
    &= \int_Uy \cdot \nu\int_r^s \varphi\bigg(\frac{|y|}{\rho}\bigg)\frac{1}{\rho^{m+1}}\,d\rho\, d|\delta V| + \int_U \bigg[\frac{\varphi(|y|/s)}{s^m}-\frac{\varphi(|y|/r)}{r^m}\bigg]\,d\mu_V\,.
\end{align}
We now let $\varphi \to \mathbf{1}_{[0,1]}$ and apply the dominated convergence theorem to \eqref{eq:radial test 4} to conclude that
\begin{align}\notag
    \int_U |\hat y^\perp|^2 &\bigg[\frac{\mathbf{1}_{B_s}(y)}{s^m}-\frac{\mathbf{1}_{B_r}(y)}{r^m} + \int_r^s \frac{m\mathbf{1}_{B_\rho}(y)}{\rho^{m+1}}\,d\rho\bigg]d \mu_V \\ \label{eq:radial test 5}
    &= \int_Uy \cdot \nu\int_r^s \mathbf{1}_{B_\rho}(y)\frac{1}{\rho^{m+1}}\,d\rho\, d|\delta V| + \int_U \bigg[\frac{\mathbf{1}_{B_s}(y)}{s^m}-\frac{\mathbf{1}_{B_r}(y)}{r^m}\bigg]\,d\mu_V \,.
\end{align}
To simplify both sides of \eqref{eq:radial test 5}, we use the equivalence
$$
\mathbf{1}_{(r,s)}(\rho)\mathbf{1}_{(|y|,\infty)}(\rho)=\mathbf{1}_{(r,s) \cap (|y|,\infty)}(\rho) = \mathbf{1}_{B_s}(y)\mathbf{1}_{(\max\{r,|y|\},s)}(\rho)
$$
to compute
\begin{align}\notag
  \int_r^s \frac{\mathbf{1}_{B_\rho}(y)}{\rho^{m+1}}\,d\rho = \int \frac{\mathbf{1}_{(r,s)}(\rho)\mathbf{1}_{(|y|,\infty)}(\rho)}{\rho^{m+1}}\,d\rho &= \int \frac{\mathbf{1}_{B_s}(y)\mathbf{1}_{(\max\{r,|y|\},s)}(\rho)}{\rho^{m+1}}\,d\rho \\ \label{eq:radial test 6}
  &= -\frac{\mathbf{1}_{B_s}(y)}{ms^m} + \frac{\mathbf{1}_{B_s}(y)}{m\max\{r,|y|\}^m}\,.
\end{align}
If we substitute \eqref{eq:radial test 6} into \eqref{eq:radial test 5}, we have
\begin{align}\notag
    \int_U &|\hat y^\perp|^2 \bigg[ \frac{\mathbf{1}_{B_s}(y)}{\max\{r,|y|\}^m} - \frac{\mathbf{1}_{B_r}(y)}{r^m}\bigg]d \mu_V \\ \notag
    &= \int_Uy \cdot \nu\bigg[-\frac{\mathbf{1}_{B_s}(y)}{ms^m} + \frac{\mathbf{1}_{B_s}(y)}{m\max\{r,|y|\}^m}\bigg] d|\delta V| + \int_U \bigg[\frac{\mathbf{1}_{B_s}(y)}{s^m}-\frac{\mathbf{1}_{B_r}(y)}{r^m}\bigg]\,d\mu_V \\ \label{eq:radial test 7}
    &= \int_{B_s(0)} \frac{y \cdot \nu}{m}\bigg[-\frac{1}{s^m} + \frac{1}{\max\{r,|y|\}^m}\bigg] d|\delta V| + \frac{\mu_V(B_s)}{s^m} - \frac{\mu_V(B_r)}{r^m}\,.
\end{align}
Since the integrand on the left hand side simplifies to $\mathbf{1}_{B_s\setminus B_r}(y)|\hat y^\perp|^2 / |y|^m$, we have proved \eqref{eq:monotonicity identity}.
\end{proof}

\section{Expression for \texorpdfstring{$M(k,l)$}{Mkl} in terms of special functions}\label{app: special functions}
\begin{proof}[Proof of \eqref{eqn: lawson competitor volume}]
We write the first integral in  \eqref{eqn: Lawson comp Flint vol} as
\begin{align} \label{eqn: term I and term II}
    \int_0^{\rho-d} u^k \big(\rho^2 -(u+d)^2\big)^{\frac{l+1}{2}} \,du -
    \int_0^\lambda u^k \big(\rho^2 -(u+d)^2\big)^{\frac{l+1}{2}} \,du = I - II\,.
\end{align}
For term $I$ in \eqref{eqn: term I and term II}, we factorize and use the change of variable $(\rho-d)t = u$ to find
\begin{align*}
    I & = \int_0^{\rho-d} u^k \big(\rho - u-d\big)^{\frac{l+1}{2}} \big(\rho + u + d\big)^{\frac{l+1}{2}} \ ,du  \\
    & = {(\rho^2 - d^2)^{\frac{l+1}{2}}} \int_0^{\rho-d} u^k \Big(1 - \mfrac{u}{\rho-d} \Big)^{\frac{l+1}{2}} \Big(1 + \mfrac{u}{\rho+d} \Big)^{\frac{l+1}{2}} \, du\\
     & = {(\rho^2 - d^2)^{\frac{l+1}{2}}} (\rho -d)^{k+1}\int_0^1 t^k (1-t)^{\frac{l+1}{2}} \Big( 1 + \mfrac{\rho - d}{\rho+ d} \, t \Big)^{\frac{l+1}{2}}\, dt
\end{align*}
By the Euler integral formula (see, e.g. \cite[\S 2.1.3]{bateman_2023_cnd32-h9x80})
\begin{align}\label{eqn:euler type identity}
\mathrm{B}(b, c-b)_2 F_1(a, b ; c ; z)=\int_0^1 x^{b-1}(1-x)^{c-b-1}(1-z x)^{-a} d x
\end{align}
which holds for real numbers when $c>b>0$ and $z < 1$. We thus find
 \begin{align*}
  I   & ={(\rho^2 - d^2)^{\frac{l+1}{2}}} (\rho -d)^{k+1} \, \mathrm{B}\Big(k+1 , \mfrac{l+3}{2}\Big)\,  {}_2F_1\Big(-\mfrac{l+1}{2},\, k+1, \, \mfrac{l+1}{2} + k +2 ;\,  - \left(\mfrac{\rho - d}{\rho +d}\right) \Big)\,\\
     & = {(\rho^2 - d^2)^{\frac{l+1}{2}}} (\rho -d)^{k+1} \frac{\Gamma(k+1)\, \Gamma(\frac{l+3}{2})}{\Gamma(k+1 + \frac{l+3}{2})}\,  {}_2F_1\Big(-\mfrac{l+1}{2},\, k+1, \, \frac{l+1}{2} + k +2 ; \, -\left( \mfrac{\rho - d}{\rho +d} \right) \Big)\,.
\end{align*}

Similarly, using the change of variable $u=\lambda t$ and factorizing the integrand in term $II$ in \eqref{eqn: term I and term II}, we have
\begin{align}\nonumber
    II 
    & = \lambda^{k+1} \int_0^1 t^k \big(\rho^2 - (\lambda t + d)^2 \big)^{\frac{l+1}{2}} \,dt \\
  \nonumber  & = \lambda^{k+1} \int_0^1 t^k \big(\rho - \lambda t - d\big)^{\frac{l+1}{2}} \big(\rho + \lambda t + d\big)^{\frac{l+1}{2}} \, dt \\
 \label{eqn: factorize term II}   & = \lambda^{k+1} \big(\rho^2 - d^2\big)^{\frac{l+1}{2}} \int_0^1 t^k \Big(1 - \mfrac{\lambda }{\rho -d}\, t \Big)^{\frac{l+1}{2}} \Big(1 + \mfrac{\lambda }{\rho +d} \, t \Big)^{\frac{ l + 1}{2}} \,dt.
\end{align}
Picard's integral identity (see, e.g. \cite[\S 5.8.2]{bateman_2023_cnd32-h9x80}) states that for $c>a>0$ and $|x|, |y|<1$, 
\begin{equation}
    \label{eqn: Picard identity}
F_1\left(a, b_1, b_2, c ; x, y\right)=\frac{\Gamma(c)}{\Gamma(a) \Gamma(c-a)} \int_0^1 t^{a-1}(1-t)^{c-a-1}(1-x t)^{-b_1}(1-y t)^{-b_2} d t\,.
\end{equation}
Here, $F_1$ is the first Appell function. By applying this to \eqref{eqn: factorize term II}, using the fact that $\max\{ |\frac{\lambda}{\rho -d}| , |\frac{\lambda}{\rho + d}|\} <1$   by construction,  we find
\begin{align*}
   II & = \frac{\lambda^{k+1} (\rho^2 - d^2)^{\frac{l+1}{2}}}{k+1} F_1 \Big(k+1 , \, -\mfrac{l+1}{2},\, -\mfrac{l+1}{2} , \, k+2; \,\mfrac{\lambda}{\rho- d} , -\mfrac{\lambda}{\rho +d}\Big) \,.
\end{align*}
Putting these together, we see that $ I - II  = {(\rho^2 - d^2)^{\frac{l+1}{2}}}A/(k+1)$  for the term $A$ defined in \eqref{eqn: term A}. A completely analogous computation shows that the second integral in \eqref{eqn: Lawson comp Flint vol} is equal to  $(r^2 - h^2)^{\frac{k+1}{2}}B/(k+1)$
for the term $B$ defined in \eqref{eqn: term B}, thus establishing \eqref{eqn: lawson competitor volume}. 
\end{proof}

\begin{proof}[Proof of \eqref{eqn: lawson competitor perimeter}] The proof is analogous to that of \eqref{eqn: lawson competitor volume}.
To compute the first integral on the right-hand side of \eqref{eqn: Lawson comp Flint perim}, we have 
\begin{equation}
\begin{split} 
\rho  \int_{\lambda}^{\rho-d} u^k \big(\rho^2 -(u+d)^2\big)^{\frac{l-1}{2}} \,du  = \rho \big(\rho^2 - d^2\big)^{\frac{l-1}{2}}(III - IV).   
\end{split}
\end{equation}
where we set 
\begin{align*}
    III  &= \int_0^{\rho - d} u^k \Big(1-\mfrac{u}{\rho-d} \Big)^{\frac{l-1}{2}} \Big(1+ \mfrac{u}{\rho+d} \Big)^{\frac{l-1}{2}}\,du,\\
    IV & = \int_0^{\lambda} u^k \Big(1-\mfrac{u}{\rho-d} \Big)^{\frac{l-1}{2}} \Big(1+ \mfrac{u}{\rho+d} \Big)^{\frac{l-1}{2}}\,du.
\end{align*}
For term $III$, just as for the term $I$ in the proof of \eqref{eqn: lawson competitor volume}, we use the change of variable $u = (\rho -d) t$ and  apply the Euler identity \eqref{eqn:euler type identity} to find 
\begin{align*}
    III& = (\rho-d)^{k+1} \int_0^1 t^k (1-t)^{\frac{l-1}{2}} \Big( 1 + \mfrac{\rho-d}{\rho+d}\, t \Big)^{\frac{l-1}{2}} \\
    & = (\rho-d)^{k+1} \mathrm{B}\Big( k+1 , \mfrac{l+1}{2}\Big) \, {}_2F_1\Big( \mfrac{1-l}{2},\, k+1; \,\mfrac{l-1}{2} + k+2;\, -\left( \mfrac{\rho -d}{\rho+ d}\right) \Big)\\
    & = \frac{(\rho-d)^{k+1} \Gamma(k+1)\Gamma(\frac{l+1}{2})}{\Gamma(k+1+ \frac{l+1}{2})} \, {}_2F_1\Big( \mfrac{1-l}{2} , k+1; \mfrac{l-1}{2} + k+2;\, -\left( \mfrac{\rho -d}{\rho+ d}\right) \Big)
\end{align*}
For term $IV$, like for term $II$ in the proof of \eqref{eqn: lawson competitor volume}, we use the change of variable $u =\lambda t$ and the Picard identity \eqref{eqn: Picard identity} (since, as already noted, $\max\{ |\frac{\lambda}{\rho-d}|, |\frac{\lambda}{\rho+d}|<1$) to find
\begin{align*}
    VI & = \lambda^{k+1} \int_0^1 t^k \Big(1 - \mfrac{\lambda}{\rho-d}\, t \Big)^{\frac{l-1}{2}} \Big(1 + \mfrac{\lambda}{\rho+d} \, t\Big)^{\frac{l-1}{2}} \,dt \\ 
    & = \frac{\lambda^{k+1}}{k+1} F_1\Big(k+1, \, \mfrac{1-l}{2}, \, \mfrac{1-l}{2}, \, k+2 ; \,\mfrac{\lambda}{\rho- d}, \, -\left( \mfrac{\lambda}{\rho+ d}\right) \Big).
\end{align*}
Putting these together, we see that $V - VI = \frac{C}{k+1}$ where where $C$ is the constant in \eqref{eqn: term C}
A completely analogous argument for the second integral in \eqref{eqn: Lawson comp Flint perim} shows that 
\[
r  \int_{1}^{r-h} v^l \big(r^2 -(v+h)^2\big)^{\frac{k-1}{2}} \,dv  = \frac{r \big(r^2 - h^2\big)^{\frac{k-1}{2}} D}{l+1}
\]
where $D$ is the constant defined in \eqref{eqn: term D}.
This establishes \eqref{eqn: lawson competitor perimeter}.   
\end{proof}

\bibliography{references}
\bibliographystyle{plain}

\end{document}